\newtheorem{theorem}{Theorem}[section]
\newtheorem{lemma}[theorem]{Lemma}
\newtheorem{proposition}{Proposition}
\theoremstyle{definition}
\newtheorem{remark}{Remark}
\title[Fermi's Golden Rule and $H^1$ Scattering]{Fermi's Golden Rule and $H^1$ Scattering for Nonlinear Klein-Gordon Equations with Metastable States}
\subjclass{35Q40,  35B34, 35B40, 35L70.}
 \keywords{Nonlinear Klein-Gordon Equation, Fermi's Golden Rule, Metastable States, $H^1$ scattering}
 \email{matax@nus.edu.sg}
 \email{soffer@math.rutgers.edu}
\begin{document}
\maketitle

\centerline{\scshape Xinliang An}
\medskip
{\footnotesize

 \centerline{Department of Mathematics, National University of Singapore}

   \centerline{Singapore 119076}
} 

\medskip

\centerline{\scshape Avy Soffer}
\medskip
{\footnotesize

 \centerline{Department of Mathematics, Rutgers University}
   
   \centerline{Piscataway, NJ, USA 08854}
}

\bigskip

\newcommand{\f}{\frac}
\newcommand{\GS}{{\big(}\frac{\Gamma s}{(1+\Gamma s)^m}f(x){\big)}}

\def\p{\partial}
\def\G{\Gamma}

\def\r{\rho(t)}
\def\it{i\theta(t)}
\def\O{\Omega}
\def\t{\theta}
\def\v{\varphi}
\def\x{\xi}
\def\rb{\bar{\rho}}
\def\rbc{\f{\rho(1)^4}{1+\f{3\lambda^2\Gamma}{\Omega}\rho(1)^4(t'-1)}}
\def\fc{1+\f{3\lambda^2\Gamma}{\Omega}\rho(0)^4 t'}
\def\e{\epsilon}
\def\d{\delta}
\def\nab{\nabla}
\def\l{\bigg(}
\def\rr{\bigg)}
\def\wb{\underline{w}}

\begin{abstract}
In this paper, we explore the metastable states of nonlinear Klein-Gordon equations with potentials. 
These states come from instability of a bound state under a nonlinear \textit{Fermi's golden rule}. In \cite{SW}, Soffer and Weinstein studied the instability mechanism and obtained an anomalously slow-decaying rate $1/(1+t)^{\f14}$. Here we develop a new method to study the evolution of $L^2_x$ norm of solutions to Klein-Gordon equations. With this method, we prove a $H^1$ scattering result for Klein-Gordon equations with metastable states. By exploring the oscillations, with a dynamical system approach we also find a more robust and more intuitive way to derive the sharp decay rate $1/(1+t)^{\f14}$. 
\end{abstract}

\section{Introduction and Statement of Main Results}

In quantum mechanics, people observed some long-lived states, which last at least $10^2$ to $10^3$ times longer than the expectation. These long-lived states are called metastable states in {\color{black} the} physics literature.  
Mathematically, one would expect that these states carry anomalously slow-decaying rates. 

\subsection{Physical Motivation}
One way to produce a metastable state is through the instability of an excited state. Under tiny perturbations, energy of this (unstable) excited state starts to shift to ground state, free waves and nearby excited states. In this process, people observed anomalously long-lived states (metastable states).

To study the instability mechanism, with perturbation theory in 1927 Dirac did calculations in the following setting: Give two Hamiltonians $H_0$ and $H_1$ close to each each, assume they have eigenfunction (initial eigenstate) $i(x)$ and eigenfunction (final eigenstate) $f(x)$ respectively, Dirac calculated the transition probability per unit time from the state $i(x)$ to the state $f(x)$:
\begin{equation}\label{Dirac}
\Gamma_{i\rightarrow f}=\f{4\pi^2}{h}|\int_{\mathbb{R}^3}i(x)H_1(x)f(x) dx|^2\cdot \rho_f.
\end{equation}
Here $h$ is the Planck constant ($\approx 6.626\times 10^{-34}\quad kg\cdot m^2 \cdot s^{-1}$) and $\rho_f$ is the density of final states. 

In 1934 Fermi utilized (\ref{Dirac}) to establish his famous theory of  beta decay. In nuclear physics, beta decay is a type of radioactive decay in which a  $\beta-$ray (fast energetic electron or positron) and a neutrino are emitted from an atomic nucleus. In his paper, Fermi called (\ref{Dirac}) \textit{golden rule}. Later, in physics community, (\ref{Dirac}) is called \textit{Fermi's golden rule}.  
  
\subsection{Mathematical Motivation}
The mathematical study {\color{black} of} metastable states and Fermi's golden rule came quite late and remain largely open. 

The first complete analysis of such a problem for nonlinear PDE, was given in \cite{SW} by Soffer and Weinstein. Together with their further results in \cite{Sof-Wei1, Sof-Wei2}a nonlinear instability mechanism for a bound state of the Klein-Gordon equation with potential and an excited state of the Schr\"odinger equation with potential in $3+1$ dimensions are obtained. These {\color{black} instabilities lead to} metastable states. 

In \cite{SW} an anomalously  slow-decaying rate upper bound $1/(1+t)^{\f14}$ for metastable states was proved. {\color{black} More precisely},  Soffer and Weinstein \cite{SW} studied the following nonlinear Klein-Gordon equations (NLKG) in $3+1$ dimensions:
\begin{equation}\label{1.1}
\partial^2_{t}u-\Delta u+V(x) u+m^2 u=\lambda u^3, \quad \lambda \in \mathbb{R}\backslash 0.
\end{equation}
\begin{equation}\label{1.2}
u(0,x)=u_0(x), \quad \partial_{t}u(0,x)=u_1(x).
\end{equation}
They proved
\begin{theorem}\label{thm1.1}
Let $V(x)$ be real-valued and such that
\begin{itemize}
\item (V1) for $\d>5$ and $|\alpha|\leq 2$, $|\partial^{\alpha}V(x)|\leq C_{\alpha}(1+|x|^2)^{-\f{\d}{2}}$,
\item (V2) $(-\Delta+1)^{-1}\big( (x \cdot \nabla)^{l} V(x) \big) (-\Delta+1)^{-1}$ is bounded on $L^2$\\ for $|l|\leq N_{*}$ with $N_{*}\geq 10$. 
\item(V3) zero is not a resonance of the operator $-\Delta+V$. \footnote{We say that $0$ is a resonance for operator $L$ if there exists a solution $v$ of $L v=0$ such that $(1+|x|)^{-\frac{\gamma}{2}}v(x) \in L^2(\mathbb{R}^3)$ for any $\gamma>\f12$ but not for $\gamma=0$. Recall that a resonance at $0$ is an obstruction to prove dispersion. Our condition $(V3)$ removes this obstruction.}
\end{itemize}
Assume the operator
\begin{equation*}
B^2=-\Delta+V(x)+m^2
\end{equation*}
has continuous spectrum, $\sigma_{cont}(B^2)=[m^2, +\infty)$, and a unique strictly positive simple eigenvalue, $\Omega^2<m^2$ with associated normalized eigenfunction $\psi(x)$: 
\begin{equation*}
B^2\psi(x)=\O^2 \psi(x).
\end{equation*}
Assume the resonance condition (\textit{Fermi's Golden Rule Condition})
\begin{equation}\label{1.5}
\Gamma=\frac{\pi}{3\O}\int_{\mathbb{R}^3}P_c\psi^3(x) \d(B-3\O)P_c \psi^3(x) dx\equiv \f{\pi}{3\O}|(\mathcal{F}_c \psi^3)(3\O)|^2>0.
\end{equation}
Here, $P_c$ denotes the projection onto the continuous spectral part of $B$ and $\mathcal{F}_c$ denotes the Fourier transform relative to the continuous spectral part \footnote{$\mathcal{F}_c$ could be defined as a distorted Fourier transform. For $B^2=-\Delta+V(x)+m^2$, let $\phi(x,\lambda)$ be the unique functions satisfying
$$-\frac{d^2}{d\lambda^2}\phi(x,\lambda)+V(x)\phi(x,\lambda)+m^2\phi(x,\lambda)=\lambda^2 \phi(x,\lambda), \quad \phi(0,\lambda)=-1, \, \frac{d}{d\lambda}\phi(0,\lambda)=0.$$
For $\zeta>m$ we define
$$\mathcal{F}_c f(\zeta):=\int_{m}^{\infty}\phi(x,\zeta)f(x)dx. $$
} of $B$.
Assume that the initial data $u_0, u_1$ are such that the {\color{black}norms} $\|u_0\|_{W^{2,2}\cap W^{2,1}}$ and $\|u_1\|_{W^{1,2}\cap W^{1,1}}$ are sufficiently small.  

Then, the solution of the initial value problem for (\ref{1.1}) decays as $t\rightarrow +\infty$. 
\begin{equation*}
u(t,x)=R(t) \cos[\O t+\bar{\theta}(t)] \psi(x)+\eta(t,x), \quad \mbox{where}
\end{equation*}
$$R(t)=\mathcal{O}(|t|^{-\f14}), \quad \bar{\theta}(t)=\mathcal{O}(|t|^{\f12}), \quad \|\eta(t,x)\|_{L^8_{x}(\mathbb{R}^3)}\leq \f{1}{(1+t)^{\f34}}.$$
More precisely,
$$R(t)=\tilde{R}(t)+\mathcal{O}(|\tilde{R}(t)|^2), \quad (|\tilde{R}(t)| \, \, \mbox{small})$$
$$\mbox{and let}\, \tilde{R}(0)=\tilde{R}_0, \, \mbox{we have}\, \tilde{R}(t)=\tilde{R}_0(1+\f34 \tilde{R}^4_0 \O^{-1} \lambda^2 \Gamma |t|)^{-\f14}\cdot [1+\mathcal{O}(|t|^{-\delta})], \quad \quad \delta>0$$
$$R(0)=R_0, \quad R^2_0=|\int_{\mathbb{R}^3}\psi(x)u_0(x)dx|^2+\O^{-2}|\int_{\mathbb{R}^3}\psi(x)u_1(x)dx|^2.$$
\end{theorem}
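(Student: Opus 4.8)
The plan is to decompose $u$ along the spectral subspaces of $B$, reduce the evolution to a finite-dimensional ``oscillator'' variable coupled to a dispersive field, and then extract an effective dissipative ODE for the oscillator amplitude in which the Fermi golden rule constant $\Gamma$ plays the role of a friction coefficient. Write $u(t,x)=a(t)\psi(x)+\eta(t,x)$ with $a(t)=\langle\psi,u(t)\rangle$ and $\eta(t)=P_c u(t)$, and project \eqref{1.1}. Since $u^3=a^3\psi^3+3a^2\psi^2\eta+3a\psi\eta^2+\eta^3$, this yields the scalar equation $\ddot a+\Omega^2 a=\lambda\langle\psi,u^3\rangle$ and the dispersive equation $\partial_t^2\eta+B^2\eta=\lambda P_c u^3$. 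Local well-posedness in $H^1$ together with conservation of the Hamiltonian of \eqref{1.1} gives global existence and a smallness bound valid for all $t$, so the whole argument runs as a bootstrap/a~priori estimate. It is convenient to pass to the complex oscillator variable $A(t)$ with $a=Ae^{i\Omega t}+\bar A e^{-i\Omega t}$ and $\dot a=i\Omega(Ae^{i\Omega t}-\bar A e^{-i\Omega t})$, so that $a^3$ splits into a non-resonant frequency-shifting piece $3|A|^2Ae^{i\Omega t}$ (plus conjugate) and a piece $A^3e^{3i\Omega t}$ (plus conjugate) that drives $\eta$ at frequency $3\Omega$; note that the hypothesis $\Gamma>0$ implicitly forces $3\Omega$ into the continuous spectrum $[m,\infty)$ of $B$.

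The heart of the matter is to solve the $\eta$-equation by Duhamel,
\[
\eta(t)=\cos(Bt)\eta_0+B^{-1}\sin(Bt)\dot\eta_0+\lambda\int_0^t B^{-1}\sin\!\big(B(t-s)\big)P_c u^3(s)\,ds,
\]
retain the resonant response $\eta_{\mathrm{res}}(t)\approx\lambda\int_0^t B^{-1}\sin(B(t-s))P_c\psi^3\,A(s)^3e^{3i\Omega s}\,ds$, and substitute it into the cross term $3\lambda\langle\psi,a^2\psi^2\eta\rangle$ in the $a$-equation. The factor $a^2=A^2e^{2i\Omega t}+2|A|^2+\bar A^2e^{-2i\Omega t}$ paired with $\eta_{\mathrm{res}}$ produces, among others, an $e^{i\Omega t}$-component proportional to $\lambda^2|A|^4A$ with coefficient $\langle\psi^3,(\cdot)(B)\psi^3\rangle$, where $(\cdot)(B)$ is the $B^{-1}\sin$-kernel evaluated near the spectral parameter $3\Omega$. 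Using the limiting absorption principle / stationary phase (Sokhotski--Plemelj) to evaluate $\int_0^t\langle\psi^3,B^{-1}\sin(B(t-s))P_c\psi^3\rangle e^{3i\Omega s}\,ds$, and matching the $e^{i\Omega t}$-component of $\ddot a+\Omega^2a$, i.e.\ $2i\Omega\dot A$, to the corresponding component of $\lambda\langle\psi,u^3\rangle$, one obtains
\[
\dot A=-\,c\,\lambda^2\,\Gamma\,|A|^4A+i\,\kappa\,|A|^2A+(\text{errors}),\qquad c>0,
\]
where the conservative term $i\kappa|A|^2A$ (with $\kappa$ real, fed by $3\lambda\langle\psi,\psi^4\rangle|A|^2A$ and by the non-resonant part of $\eta$) is a pure frequency shift, and the sign of the dissipative term is fixed by $\Gamma>0$. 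Writing $A=\tfrac12 R e^{i\bar\theta}$ gives $\dot R=-c'\lambda^2\Gamma R^5+\ldots$ and $\dot{\bar\theta}=O(R^2)$, so $R^{-4}$ grows linearly in $t$, producing $R(t)=\mathcal O(t^{-1/4})$ with the stated profile $\tilde R$ and $\bar\theta(t)=\mathcal O(t^{1/2})$. Finally, feeding $R(t)\lesssim(1+t)^{-1/4}$ back into the Duhamel formula for $\eta$, one sees the quasi-stationary resonant response obeys $\|\eta_{\mathrm{res}}(t)\|_{L^8_x}\lesssim\lambda|A(t)|^3\|(B^2-9\Omega^2-i0)^{-1}\psi^3\|_{L^8_x}\lesssim(1+t)^{-3/4}$, while the transient part decays faster by the dispersive/Strichartz estimates for $e^{iBt}P_c$ valid under (V1)--(V3), and the remaining cubic contributions are higher order; this closes the bootstrap.

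The main obstacle is twofold, and both parts concern the self-consistency of the anomalously slow $t^{-1/4}$ decay. First, one must show that the non-local memory term $\int_0^t K(t-s)\,|A(s)|^2A(s)\,ds$ (with $K$ the projected sine kernel) equals, to the needed order, the local friction term $-c\lambda^2\Gamma|A|^4A$ plus remainders that are genuinely summable against the slow decay --- this requires exploiting the oscillation $e^{3i\Omega s}$ against the smooth variation of $A$ and cleanly separating the dissipative part from the conservative frequency shift. Second, because $t^{-1/4}$ is borderline, every remainder (the cubic terms $a\psi\eta^2$ and $\eta^3$, the non-resonant part of $\eta$, and the discrepancy $R-\tilde R$) must be controlled with enough room to be strictly subleading, which forces a multi-parameter bootstrap coupling the $L^2$-type norm of $\eta$, its $L^8_x$ dispersive norm, and the oscillator amplitude simultaneously. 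The condition $\Gamma>0$ is used in an essential way precisely when the effective ODE for $R$ is integrated: it is what makes that ODE dissipative rather than unstable, and hence what produces a decaying metastable state rather than blow-up of the oscillator.
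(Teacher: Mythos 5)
Theorem~\ref{thm1.1} is not proved in this paper: it is quoted verbatim from Soffer--Weinstein \cite{SW}, and the text immediately preceding it says exactly that. What the paper itself proves is Theorem~\ref{thm1.2}, a sharpened version (matching upper \emph{and} lower bounds on the amplitude, explicit in the initial data), by a route that deliberately departs from \cite{SW}. So the right comparison is between your outline, the \cite{SW} proof it reconstructs, and the paper's new Sections~3--5.

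Your outline is a faithful high-level reconstruction of the \cite{SW} strategy: decompose $u=a\psi+\eta$, project to get the coupled ODE--PDE system, pass to $A$ with the gauge $a=Ae^{i\O t}+\bar Ae^{-i\O t}$, extract the resonant part of the Duhamel integral for $\eta$ at frequency $3\O$, apply the limiting absorption / Plemelj identity to produce the term $-c\lambda^2\Gamma|A|^4A$, and integrate the resulting Riccati-type ODE. This is exactly how \cite{SW} obtains the $t^{-1/4}$ rate. One small caution: your remark that ``conservation of the Hamiltonian'' gives a global smallness bound is not how \cite{SW} (or this paper) closes the argument --- the point is not mere boundedness but the quantitative bootstrap coupling $|A(t)|\lesssim (1+t)^{-1/4}$ with the $L^8_x$ decay of $\eta$, driven by dispersive estimates for $e^{iBt}P_c$; energy conservation alone does not see the decay. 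You do acknowledge this elsewhere, so it is more a phrasing issue than a gap.

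Where the paper genuinely differs is in how it handles the ``errors'' in your schematic ODE $\dot A=-c\lambda^2\Gamma|A|^4A+i\kappa|A|^2A+(\text{errors})$. In \cite{SW} (and implicitly in your ``matching the $e^{i\O t}$-component'' step) the many oscillatory terms $e^{ik\O t}$ are removed by a near-identity normal-form change of variables $A\mapsto\tilde A$. This paper instead multiplies by $e^{i\O t}$, sets $X=e^{i\O t}A$, writes $X=\rho e^{i\theta}$ in polar coordinates, and treats the oscillatory sums $\sin k\theta$, $\cos k\theta$ directly by integration by parts against the phase $\theta(t)\approx\O t$ (Sections~3--5). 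A parametrix $\bar\rho(t)=\rho(0)(1+\tfrac{3\lambda^2\Gamma}{\O}\rho(0)^4 t)^{-1/4}$ is introduced and the deviation $\e(t)=\rho/\bar\rho-1$ is shown to stay small by a bootstrap. This avoids normal forms entirely and --- crucially --- gives a \emph{lower} bound on $\rho(t)$ in addition to the upper bound, something the normal-form route in \cite{SW} does not deliver (it only gives $R=\tilde R+\mathcal O(|\tilde R|^2)$ in the small-data regime, not a two-sided estimate). Your outline, like \cite{SW}, produces only the upper bound $R(t)=\mathcal O(t^{-1/4})$, which is all Theorem~\ref{thm1.1} claims; if you wanted Theorem~\ref{thm1.2} you would need the parametrix-and-bootstrap step to rule out faster decay.
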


The mathematical study of Fermi's golden rule is initiated from \cite{S93}. For Klein-Gordon equations, in \cite{SW} the energy transferring from one bound state to continuous spectrum is studied in details. In \cite{BC11}, a treatment for multi bound states is obtained. For Schr\"odinger's equation, Fermi's golden rule is explored in \cite{CM08, G15, GS07, GW08, GW11, Sof-Wei2, Ts, TY}. For further references on related problems we refer to \cite{B-Pe, B-S, FG14, FGS10, KMM, L-O-S, Miz08, Sof-Wei1, Sof-Wei3} and a survey paper \cite{Ko}.

\subsection{Main Results}
In this paper, by exploring the oscillations, we give a more intuitive way to derive asymptotic behaviors.  We also present a new method to obtain $H^1$ scattering for nonlinear Klein-Gordon equations with metastable states. The main theorems of this paper are
\begin{theorem}\label{thm1.2}
Under the same assumption of Theorem \ref{thm1.1} and an additional assumption $\int_{\mathbb{R}^3}u_0(x)\psi(x)dx\neq 0$, then solution $u(x,t)$ to (\ref{1.1}) has the following expansion as $t\rightarrow +\infty$:
\begin{equation}\label{solu}
u(t,x)=2\rho(t)\cos \theta(t) \psi(x)+\eta(t,x),
\end{equation}
where
\begin{equation}\label{eqn1.9}
\f{\f12\rho(0)}{\bigg( 1+\f{3\lambda^2\Gamma}{\O}\rho(0)^4 t \bigg)^{\f14} } \leq \rho(t) \leq \f{\f32\rho(0)}{\bigg( 1+\f{3\lambda^2\Gamma}{\O}\rho(0)^4 t \bigg)^{\f14} }.
\end{equation}

\begin{equation*}
\theta(t)-\O t=\mathcal{O}(t^{\f12}), \quad \mbox{and} \quad \|\eta(t,x)\|_{L^8_{x}(\mathbb{R}^3)}\leq \f{1}{(1+t)^{\f34}}.
\end{equation*}
\end{theorem}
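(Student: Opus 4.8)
The plan is to combine the standard spectral decomposition and modulation equations with an averaging argument over the fast oscillation of frequency $\Omega$, and to close everything in a single continuity argument for the pair $(\rho,\eta)$.

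\textbf{Set-up.} First I would split the solution along the spectral subspaces of $B^{2}=-\Delta+V+m^{2}$: write $u(t,x)=R(t)\psi(x)+\eta(t,x)$ with $\eta=P_{c}u$ and $R(t)=\langle\psi,u(t)\rangle$, so that \eqref{1.1} becomes
\begin{equation*}
\ddot R+\Omega^{2}R=\lambda\big\langle\psi,(R\psi+\eta)^{3}\big\rangle,\qquad
\ddot\eta+B^{2}\eta=\lambda P_{c}(R\psi+\eta)^{3}.
\end{equation*}
Introducing the complex amplitude $z=\Omega R+i\dot R$ (so that $\dot z=-i\Omega z+i\lambda\langle\psi,(R\psi+\eta)^{3}\rangle$) and writing $z=2\Omega\rho\,e^{-i\theta}$, i.e. $R=2\rho\cos\theta$, turns the scalar equation into real modulation equations for the slowly varying $\rho(t)\ge 0$ and the phase $\theta(t)$. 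The $\eta$-equation is recast as a first order system and solved by Duhamel's formula with the propagator $e^{itB}P_{c}$, for which we use the dispersive bound $\|e^{itB}P_{c}f\|_{L^{8}_{x}}\lesssim\langle t\rangle^{-3/4}\|f\|_{W^{k,8/7}}$ on $\mathbb{R}^{3}$.

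\textbf{Averaging and Fermi's golden rule.} Expanding the self-interaction, $R^{3}=6\rho^{3}\cos\theta+2\rho^{3}\cos 3\theta$, the only \emph{secular} (non-oscillatory) contribution to $\dot\rho$ arises when the $\cos 3\theta\approx\cos 3\Omega t$ part of the forcing drives $\eta$ at the spectral value $3\Omega\in\sigma_{\mathrm{cont}}(B)$: to leading order $\eta$ acquires a component $\eta_{3\Omega}\approx\lambda\rho^{3}e^{3i\Omega t}(B^{2}-9\Omega^{2}-i0)^{-1}P_{c}\psi^{3}+\text{c.c.}$, and reinserting it into $3\lambda R^{2}\langle\psi^{3},\eta\rangle$ and averaging over the fast phase produces, via the limiting absorption principle, the strictly dissipative term whose coefficient is $\mathrm{Im}$ of the outgoing resolvent matrix element, i.e. $\pi\langle P_{c}\psi^{3},\delta(B-3\Omega)P_{c}\psi^{3}\rangle$, exactly as in \eqref{1.5}. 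Thus the modulation equations take the form
\begin{equation*}
\dot\rho=-\frac{3\lambda^{2}\Gamma}{4\Omega}\,\rho^{5}+\mathcal E_{\rho},\qquad
\dot\theta-\Omega=\mathcal O(\rho^{2})+\mathcal O(\|\eta\|_{L^{\infty}_{x}}),
\end{equation*}
where $\Gamma>0$ from \eqref{1.5} makes the leading term in the first equation strictly dissipative; solving the model equation $\dot\rho=-\tfrac{3\lambda^{2}\Gamma}{4\Omega}\rho^{5}$ gives precisely $\rho_{*}(t):=\rho(0)\big(1+\tfrac{3\lambda^{2}\Gamma}{\Omega}\rho(0)^{4}t\big)^{-1/4}$.

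\textbf{The continuity argument.} On the maximal interval $[0,T)$ where
\begin{equation*}
\tfrac12\rho_{*}(t)\le\rho(t)\le\tfrac32\rho_{*}(t),\qquad
\|\eta(t)\|_{L^{8}_{x}}\le C_{0}\langle t\rangle^{-3/4},\qquad
\|\eta(t)\|_{H^{1}_{x}}\le C_{1}
\end{equation*}
hold, I would improve each bound. For $\eta$, the mixed and higher terms $R^{2}\psi^{2}\eta$, $R\psi\eta^{2}$, $\eta^{3}$ are handled by H\"older and the dispersive bound through $\int_{0}^{t}\langle t-s\rangle^{-3/4}\langle s\rangle^{-5/4}\,ds\lesssim\langle t\rangle^{-3/4}$. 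The delicate term is the cubic self-interaction $6\lambda\rho^{3}\cos\theta\,P_{c}\psi^{3}$: the naive Duhamel estimate only yields $\int_{0}^{t}\langle t-s\rangle^{-3/4}\langle s\rangle^{-3/4}\,ds\sim\langle t\rangle^{-1/2}$, so one integrates by parts in $s$, using that $-B\pm\Omega$ is invertible on $\sigma_{\mathrm{cont}}(B)$ because $\Omega<m$. This peels off an explicit ``bound'' term $\sim\rho(t)^{3}\cos\theta\,(B^{2}-\Omega^{2})^{-1}(6\lambda P_{c}\psi^{3})$, whose $L^{8}_{x}$ norm is $\sim\langle t\rangle^{-3/4}$, and leaves a remainder whose forcing carries the extra factor $\tfrac{d}{ds}\rho^{3}\sim\rho^{7}$, now integrable; the $\cos 3\theta$-forcing is split into the resonant piece near $B=3\Omega$ (already extracted above, contributing $\lesssim\rho^{3}\lesssim\langle t\rangle^{-3/4}$ to $\|\eta\|_{L^{8}_{x}}$ since the outgoing resolvent of a Schwartz function lies in $L^{8}(\mathbb{R}^{3})$) and a non-resonant piece treated by the same integration by parts. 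The $H^{1}_{x}$ bound follows from the energy identity for $\eta$ together with control of the bound-state energy $\sim\Omega^{2}\rho^{2}$, and this is exactly where the new estimate on $\|\eta\|_{L^{2}_{x}}$ enters. Finally, for the amplitude I would set $\mu:=\rho/\rho_{*}$ and derive $\dot\mu=-\tfrac{3\lambda^{2}\Gamma}{4\Omega}\rho_{*}^{4}\,\mu(\mu^{4}-1)+\mathcal E_{\rho}/\rho_{*}$; since $\rho_{*}(t)^{4}\sim(\tfrac{3\lambda^{2}\Gamma}{\Omega}t)^{-1}$ the restoring term drives $\mu$ toward $1$, so provided $\mathcal E_{\rho}/\rho_{*}$ is integrable (or, for its oscillatory part, has a bounded time integral after integration by parts) the bound $\tfrac12\le\mu\le\tfrac32$ is recovered with room to spare; and from $\dot\theta-\Omega=\mathcal O(\rho^{2})=\mathcal O(\langle t\rangle^{-1/2})$ one integrates to $\theta(t)-\Omega t=\mathcal O(t^{1/2})$.

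\textbf{Main obstacle.} I expect the crux to be the bookkeeping at the \emph{critical} decay rate. Because $\rho\sim t^{-1/4}$, the restoring coefficient $\rho_{*}^{4}$ in the $\mu$-equation is only $\sim 1/t$, i.e. exactly marginal, so every error term --- after the $t^{1/4}$ amplification coming from dividing by $\rho_{*}$ --- must be shown to be integrable or to have a uniformly bounded time integral; this forces one to identify the oscillatory versus secular content of each nonlinear interaction very precisely, in particular to extract the Fermi golden rule term with the correct dissipative sign and to recognize the $\Omega$-frequency part of $\eta$ as a non-decaying ``bound'' piece that must be separated off before any $L^{p}_{x}$-decay estimate is applied. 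The lower bound $\rho\ge\tfrac12\rho_{*}$ is the most demanding point, since dispersive estimates only ever produce upper bounds, and one must rule out that the accumulated errors reverse or cancel the dissipation precisely when $\rho$ has already become tiny.
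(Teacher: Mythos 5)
Your plan follows essentially the same route as the paper: a complex-amplitude reduction (your $z=\Omega R+i\dot R=2\Omega\rho e^{-i\theta}$ is, up to conjugation and a constant, the paper's gauge-fixed $X=e^{i\Omega t}A=\rho e^{i\theta}$), extraction of the Fermi golden rule coefficient $\Gamma$ as the dissipative term in $\dot\rho$, comparison of $\rho$ to the explicit parametrix $\rho_*$ solving $\dot\rho_*=-\tfrac{3\lambda^2\Gamma}{4\Omega}\rho_*^5$ via a bootstrap on $\mu=\rho/\rho_*$ (the paper's $1+\epsilon$), with the oscillatory error terms controlled by integration by parts against the fast phase $\theta\approx\Omega t$. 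The paper imports the $\|\eta\|_{L^8_x}\lesssim\langle t\rangle^{-3/4}$ bound from \cite{SW} rather than reproving it, and defers the $H^1$ control to a separate section, but the argument for the two-sided bound on $\rho$ is the one you describe.
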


\begin{remark}In \cite{SW}, Soffer and Weinstein employed normal form transformations. These transformations work for small data regime but not for large data. That's also the reason that in Theorem 1.1 they do not present an explicit expression for $R(t)$ depending on initial data. They have
$$R(t)=\tilde{R}(t)+\mathcal{O}(|\tilde{R}(t)|^2) \, \, \mbox{for} \, \, |\tilde{R}(t)| \, \, \mbox{small}.$$ 
In this paper, to derive Theorem 1.2 we use a different approach by exploring the oscillations. More details are presented in Sections 3-5. Here we give a more explicit expansion and avoid normal form transformations. This new approach could be used to derive some results in large data regime. 
\end{remark}

\begin{remark}
The constants $1/2$ and $3/2$ in (\ref{eqn1.9}) could be improved to $1-\tilde{\delta}$ and $1+\tilde{\delta}$ respectively, where $\tilde{\delta}$ is a fixed small positive number depending on initial data. See also Remark 15 in Section 5. 
\end{remark}

\begin{theorem}\label{thm1.3}
Under the same assumption of Theorem \ref{thm1.1}, there exist $S_1(x) \in L^2_x(\mathbb{R}^3)$ and $S_2(x) \in H^1_x(\mathbb{R}^3)$, such that as $t\rightarrow +\infty$

\begin{equation}\label{eqn1.10}
\|\eta(t,x)-\f{\sin Bt}{B} S_1(x)-\cos Bt S_2(x)\|_{H^1_x(\mathbb{R}^3)}\rightarrow 0.
\end{equation}
\end{theorem}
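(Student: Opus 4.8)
The plan is to upgrade the dispersive decay already established for $\eta$ (from Theorem \ref{thm1.2}, which gives $\|\eta(t)\|_{L^8_x}\lesssim (1+t)^{-3/4}$) into convergence of the free-wave profile in $H^1_x$. First I would recall the equation satisfied by the continuous-spectrum part of $\eta$. Writing $\eta_c = P_c \eta$ and using the decomposition $u = 2\rho(t)\cos\theta(t)\psi(x) + \eta$, the function $\eta_c$ solves an inhomogeneous Klein-Gordon equation $\partial_t^2 \eta_c + B^2 \eta_c = P_c F$, where $F$ collects all nonlinear terms $\lambda u^3$ after substituting the decomposition — a sum of products of powers of $\rho\cos\theta$ with $\psi$ and powers of $\eta$. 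By Duhamel's formula,
\begin{equation*}
\eta_c(t) = \cos(Bt)\,\eta_c(0) + \frac{\sin(Bt)}{B}\partial_t\eta_c(0) + \int_0^t \frac{\sin\big(B(t-s)\big)}{B} P_c F(s)\, ds.
\end{equation*}
The strategy is to define the scattering data as
\begin{equation*}
S_1(x) = B\,\partial_t\eta_c(0) \cdot B^{-1} + \int_0^\infty \cos(Bs) P_c F(s)\, ds, \qquad S_2(x) = \eta_c(0) + \int_0^\infty \frac{\sin(Bs)}{B} P_c F(s)\, ds,
\end{equation*}
(adjusting so that $\frac{\sin Bt}{B}S_1 + \cos Bt\, S_2$ reproduces the free evolution of the data plus the tail), and then show the two integrals converge in $H^1_x$ and the remainder goes to zero. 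Splitting $\frac{\sin(B(t-s))}{B} = \frac{\sin Bt}{B}\cos Bs - \frac{\cos Bt}{B}\cdot\frac{\sin Bs}{B}\cdot B$ via the addition formula isolates exactly the $\frac{\sin Bt}{B}S_1$ and $\cos Bt\, S_2$ structure, and the claim reduces to: (i) $\int_0^\infty \cos(Bs) P_c F(s)\,ds$ and $\int_0^\infty \frac{\sin(Bs)}{B} P_c F(s)\,ds$ converge in $H^1_x$; (ii) $\|\int_t^\infty (\cdots) ds\|_{H^1_x}\to 0$; (iii) the projection onto the bound state, $P_b\eta = \langle\psi,\eta\rangle\psi$, vanishes in $H^1_x$ as $t\to\infty$ — which follows from the construction of the decomposition (the modulation equations force $\langle\psi,\eta\rangle$ and $\langle\psi,\partial_t\eta\rangle$ to be controlled by $\rho^3$ or to have been absorbed), so that $\eta$ and $\eta_c$ agree asymptotically; and the orthogonality $\int S_i\psi\,dx = 0$ then comes for free since each $S_i$ lives in the range of $P_c$.

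The analytic heart is step (i)–(ii): controlling $\|\cos(Bs)P_cF(s)\|_{H^1_x}$ and $\|\frac{\sin Bs}{B}P_cF(s)\|_{H^1_x}$ by an integrable function of $s$. Since $\cos(Bs)$ and $\frac{\sin Bs}{B}$ are bounded on $H^1_x$ uniformly in $s$ (functional calculus; $\frac{\sin Bs}{B}$ is even bounded on $L^2_x$ with the extra smoothing), it suffices to bound $\|P_c F(s)\|_{H^1_x}$, or more precisely to exploit that $P_c F$ is a \emph{weighted} function so that one can use local-decay/smoothing estimates rather than just $L^8$ decay. The terms in $F$ are of three types: (a) purely bound-state terms like $\lambda(2\rho\cos\theta)^3\psi^3$ — these decay like $\rho^3 \sim (1+t)^{-3/4}$, which is \emph{not} integrable, so the resonant part must be handled by the Fermi Golden Rule mechanism exactly as in \cite{SW}: the non-oscillating/resonant piece is what drives the $\rho$ decay and is removed by the modulation equations, while the genuinely oscillating remainder (frequencies $\pm 3\Omega \neq$ anything in $[m^2,\infty)$... actually $3\Omega$ may lie in the continuous spectrum — this is the whole point of the FGR) contributes, after one integration by parts in $s$ using the oscillation $e^{\pm 3i\Omega s}$ against $e^{\pm iBs}$, a boundary term plus an integral with an extra factor of $\rho' \sim \rho^5$ or $\rho\theta' $, gaining enough decay; (b) mixed terms $\rho^2\cos^2\theta\,\psi^2\eta$, which decay like $\rho^2\|\eta\|\sim (1+t)^{-1/2}(1+t)^{-3/4}$ in a local norm — borderline, again needing an oscillation argument or the weighted local-decay estimate to push past integrability; (c) terms quadratic and cubic in $\eta$, like $\rho\cos\theta\,\psi\,\eta^2$ and $\eta^3$, for which $L^8_x$ decay of $\eta$ combined with Sobolev/Hölder ($\|\eta^3\|_{L^{8/3}} \le \|\eta\|_{L^8}^3 \lesssim (1+t)^{-9/4}$) and the $L^p$–$L^{p'}$ dispersive bound for $\frac{\sin Bt}{B}P_c$ gives rapid integrable decay in $H^1_x$ directly.

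The main obstacle I expect is the \textbf{$H^1_x$ (rather than $L^\infty$ or $L^8$) control of the nonlinear tail}, specifically getting one derivative to land on the slowly-decaying terms of type (a) and (b) while still closing the estimate: the dispersive decay $(1+t)^{-3/4}$ in $L^8_x$ is comfortably subcritical for the $\eta^2,\eta^3$ terms but is \emph{exactly critical} for the terms linear in $\eta$ and insufficient (non-integrable) for the pure $\psi^3$ term, so the entire burden falls on oscillation-in-$s$ arguments (integration by parts exploiting that $e^{\pm 3i\Omega s}$ and $e^{\pm 2i\Omega s}$ beat against $e^{\pm iBs}$) together with the already-proven sharp rate $\rho(t)\sim (1+t)^{-1/4}$ and its derivative bounds $|\rho'(t)| \lesssim (1+t)^{-5/4}$, $|\theta'(t)-\Omega|\lesssim (1+t)^{-1/2}$ from Theorem \ref{thm1.2}. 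A secondary technical point is that differentiating $P_cF$ once brings down derivatives on $\psi$ (harmless, $\psi$ is Schwartz by (V1)) and on $\eta$ (needs an $H^1$-level bootstrap bound on $\eta$, i.e. $\sup_t \|\eta(t)\|_{H^1_x} < \infty$, which should be part of the a priori estimates underlying Theorem \ref{thm1.2}); and one must verify the convergence is genuinely in $H^1_x$ and not merely weak-$H^1_x$ or in a larger space, which is where the integrable-in-$s$ bound on the full $H^1_x$ norm of the integrand is indispensable. Once (i)–(iii) are in place, \eqref{eqn1.10} follows by writing $\eta - \frac{\sin Bt}{B}S_1 - \cos Bt\, S_2 = P_b\eta + (\text{oscillatory evolution of convergent-tail remainder})$ and sending $t\to\infty$.
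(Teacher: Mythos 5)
Your proposal takes the natural Duhamel-plus-oscillation route, but this is precisely the approach the paper points out \emph{cannot} work here, and the obstruction is not just technical — it is the Fermi Golden Rule condition itself.

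The central gap is in your treatment of the type (a) term $\lambda a^3 P_c\psi^3$. Since $a(s)=2\rho(s)\cos\theta(s)$ with $\rho\sim (1+s)^{-1/4}$ and $\theta(s)\approx\Omega s$, the factor $a^3$ carries oscillations $e^{\pm i\Omega s}$ and $e^{\pm 3i\Omega s}$. The pieces oscillating at $\pm\Omega$ are harmless because $\Omega<m$ lies below the continuous spectrum of $B$, so $(B\mp\Omega)^{-1}P_c$ is bounded. But your proposed integration by parts against the $e^{\pm 3i\Omega s}$ piece produces a boundary term proportional to $(B-3\Omega)^{-1}P_c\psi^3$, and this is \emph{not} in $L^2_x$: the hypothesis $\Gamma=\frac{\pi}{3\Omega}|(\mathcal F_c\psi^3)(3\Omega)|^2>0$ of Theorem \ref{thm1.1} says precisely that the spectral density of $P_c\psi^3$ at $3\Omega\in[m,\infty)$ does not vanish, so $(B-3\Omega)^{-1}P_c\psi^3$ has a non-square-integrable $1/(\lambda-3\Omega)$ singularity in the spectral representation. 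You flag this in passing (``actually $3\Omega$ may lie in the continuous spectrum --- this is the whole point of the FGR'') but do not resolve it, and it cannot be resolved by integration by parts in $s$: the very assumption that drives the anomalously slow $t^{-1/4}$ decay (which is what makes this scattering problem nontrivial) is what makes $(B-3\Omega)^{-1}$ unavailable. This is exactly the remark the paper makes before introducing the auxiliary function $w$: ``we cannot even prove that $\eta_2(t,x)\in L^2_x$'' by standard dispersive estimates. The paper's actual argument sidesteps the resonance entirely by constructing the Dirac-type auxiliary function $w(t,x)=-i\int_0^t e^{iB(t-s)}\lambda a^3(s)P_c\psi^3\,ds$, computing $\frac{d}{dt}\|w\|_{L^2_x}^2=-2\langle\mathrm{Im}\,w,\lambda a^3 P_c\psi^3\rangle$ from the self-adjointness of $B$, and pairing the non-integrable $|a^3|\sim t^{-3/4}$ against the $L^8_x$ dispersive decay $\|\mathrm{Im}\,w\|_{L^8_x}\lesssim t^{-3/4}$ to obtain the integrable bound $t^{-3/2}$ --- a nonlinear energy identity rather than stationary phase. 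A parallel auxiliary function solved backward from $t=+\infty$ handles the tail $\int_t^\infty$, and a third one with a small bootstrap on $G=\sup_t\|\eta_3\|_{L^2_x}$ handles the $\eta$-dependent source.

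A secondary gap: you invoke $\sup_t\|\eta(t)\|_{H^1_x}<\infty$ as ``part of the a priori estimates underlying Theorem \ref{thm1.2}.'' It is not. What is available from \cite{SW} and Theorem \ref{thm1.2} is the $L^8_x$ decay $\|\eta\|_{L^8_x}\lesssim (1+t)^{-3/4}$ and the ODE control on $\rho,\theta$; uniform $L^2_x$ (hence $H^1_x$) boundedness of $\eta$ is exactly what Section 6 proves for the first time, using the auxiliary-function method above. Assuming it as input is circular. (Minor issues: your displayed definition of $S_1$ contains a stray $B\cdot B^{-1}$, and the type (b) terms are handled in the paper together with $\eta^2,\eta^3$ inside $\eta_3$, again via the auxiliary function rather than by integration by parts.)
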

{\bf Note:} Since
\begin{equation*}
\|2\rho(t)\cos\theta(t)\psi(x)\|_{H^1_x(\mathbb{R}^3)}\rightarrow 0,
\end{equation*}
as $t\rightarrow +\infty$. Therefore, we also prove
\begin{theorem}(Forward $H^1$ Scattering)\label{thm1.4}
Under the same assumption of Theorem \ref{thm1.1}, for solution $u(t,x)$ in (\ref{solu}), there exist $S_1(x)$ and $S_2(x)$ \footnote{The detailed expressions of $S_1(x), S_2(x)$ are given in Theorem \ref{scattering states} in Section 6.} with finite $L^2_x(\mathbb{R}^3)$ and $H^1_x(\mathbb{R}^3)$ norms respectively, such that as $t\rightarrow +\infty$
\begin{equation*}
\|u(t,x)-\f{\sin Bt}{B} S_1(x)-\cos Bt S_2(x)\|_{H^1_x(\mathbb{R}^3)}\rightarrow 0.
\end{equation*}
\end{theorem}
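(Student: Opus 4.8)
The plan is to obtain Theorem \ref{thm1.4} directly from Theorems \ref{thm1.2} and \ref{thm1.3} by a triangle-inequality argument, the only genuinely new input being the observation that the bound-state part of the solution carries a fixed $H^1_x$ profile $\psi$ whose amplitude $\rho(t)$ decays.

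First I would record that the eigenfunction $\psi$ lies in $H^1_x(\mathbb{R}^3)$ with finite norm. Indeed $\psi\in L^2_x$ by normalization, and from $B^2\psi=\O^2\psi$ we get $-\Delta\psi=(\O^2-m^2-V(x))\psi$; since $V$ is bounded by $(V1)$, the right-hand side lies in $L^2_x$, so $\psi\in H^2_x\subset H^1_x$. Set $C_\psi:=\|\psi\|_{H^1_x}<\infty$.

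Next, using the decomposition (\ref{solu}) from Theorem \ref{thm1.2}, I would write, with $S_1\in L^2_x$ and $S_2\in H^1_x$ the profiles produced by Theorem \ref{thm1.3},
\[
u(t,x)-\tfrac{\sin Bt}{B}S_1(x)-\cos Bt\,S_2(x)=2\rho(t)\cos\theta(t)\,\psi(x)+\Big(\eta(t,x)-\tfrac{\sin Bt}{B}S_1(x)-\cos Bt\,S_2(x)\Big).
\]
The first term has $H^1_x$-norm at most $2C_\psi\,\rho(t)$, and by the upper bound in (\ref{eqn1.9}),
\[
\rho(t)\le\frac{\tfrac32\rho(0)}{\big(1+\tfrac{3\lambda^2\Gamma}{\O}\rho(0)^4 t\big)^{1/4}}\longrightarrow 0\qquad(t\to+\infty),
\]
so this term tends to $0$ in $H^1_x$. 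The second term tends to $0$ in $H^1_x$ by (\ref{eqn1.10}). Adding the two bounds yields the claim.

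I do not expect any obstacle at this stage: all the analytic difficulty — constructing $S_1,S_2$, establishing the $H^1_x$ convergence of $\eta$, and in particular the new estimate on the $L^2_x$ norm of the dispersive part advertised in the abstract — is already absorbed into Theorem \ref{thm1.3}, while the decay of $\rho$ is Theorem \ref{thm1.2}. The present statement is the immediate corollary obtained by noting that $\|2\rho(t)\cos\theta(t)\psi\|_{H^1_x}\to 0$, exactly as indicated in the Note preceding the theorem.
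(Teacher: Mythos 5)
Your proposal is correct and follows essentially the same route as the paper, which derives Theorem \ref{thm1.4} from Theorem \ref{thm1.3} in the ``Note'' immediately preceding it via the observation $\|2\rho(t)\cos\theta(t)\psi(x)\|_{H^1_x}\to 0$. You simply spell out the two details the paper leaves implicit — that $\psi\in H^1_x$ (via elliptic regularity from $B^2\psi=\O^2\psi$) and that $\rho(t)\to 0$ follows from the upper bound in (\ref{eqn1.9}) — so the argument is the same.
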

\begin{remark}
Since $u(t,x)=2\rho(t)\cos \theta(t)\psi(x)+\eta(t,x)$, from $\rho(t)\approx 1/(1+t)^{\f14}$ and $\|\eta(t,x)\|_{L^8_x}\leq 1/(1+t)^{\f34}$, we only have $\|u(t,x)\|_{L^8_x}\leq 1/(1+t)^{\f14}$. Thus, It is not straight forward that 
$$\|u^3(t,x)\|_{L^1_tL^2_x(\mathbb{R},\mathbb{R}^3)}<+\infty.$$
Hence we cannot use the standard tools to prove scattering. In this paper, we develop a new method in Section 6 to prove Theorem \ref{thm1.4}. This new method is designed to study solutions' $L^2_x(\mathbb{R}^3)$ norms and to deal with the very slow decay rates of metastable states. 
\end{remark}

{\color{black}
\begin{remark}
For scattering, in 2011 Bambusi and Cuccagna \cite{BC11} obtained an interesting $H^1$ scattering result for Klein-Gordon equations with potentials in a slightly different setting (with an arbitrary number of possibly degenerate bound states) via normal form transformations and dispersive (Strichartz) estimates. Our scattering result here is consistent with their result when considering one bound state. The approach we employ here is quite different from theirs and we only use $L^2$ based estimates. Our method could provide some detailed estimates for decompositions of solutions. It would be very interesting to see how to generalize our approach to scenarios with multiple bound states. 

\end{remark}
}

\begin{remark}
For simplicity, in the following parts of this paper, for $p\geq 1$ we use $L^p_x$ to denote $L^p_x(\mathbb{R}^3)$. 
\end{remark}

\subsection{On Soliton Resolution Conjecture}
It is of great interest to study the large time behavior of nonlinear dispersive equations. This quest is aided by 
the \textit{soliton resolution conjecture}: asymptotic states of nonlinear dispersive equations should in general consist of free waves (solutions to the linear homogenous equations) and stable solitons moving apart from each other. 

However, for the generic physical circumstances there are also unstable coherent states. After their breakdown, metastable states may emerge, which could contribute to the final states. In the small data region as in \cite{SW} and this paper, Theorem 1.1 and Theorem 1.2 indicate a very slow decay rates for metastable states. One natural question is whether these metastable states could effect the conclusion of soliton resolution conjecture. {\color{black}In other words}, is it still true that solution $u(t,x)$ to (\ref{1.1}) could be decomposed into free waves and remainder terms, the $H^1_x$ norm of remainder terms go to $0$ as $t\rightarrow +\infty$?  

This is not a trivial question. Due to the slow decaying rate of $u(t,x)$, the standard scattering results do not follow.  In Section 6, we present a new method to study this question. And we obtain Theorems 1.3-1.5, which proves $H^1$ scattering and verifies the \textit{soliton resolution conjecture} in this setting. This means that even though we have constructed metastable states with both upper and lower bounds $1/(1+t)^{\f14}$ for a slow-decaying {\color{black} rate} to (\ref{1.1}), the \textit{soliton resolution conjecture} is as strong as it is before. 

For more complicate cases, results as in Theorems 1.2-1.5 remain open. 

\subsection{Main Difficulties}
To prove Theorems \ref{thm1.2}-\ref{thm1.4}, we would expect some difficulties. It is natural to ask
\begin{itemize}
\item Question 1: How to show the sharp decay rate for $\rho(t)$ as in (\ref{eqn1.9})? 
\item Question 2: We have  $\|\eta(t,x)\|_{L^8_{x}}\leq 1/(1+t)^{\f34}$ from \cite{SW}. But how to explore the detailed structures for $\eta(t,x)$ in $L^2_x$ as in (\ref{eqn1.10})? \\
\end{itemize}

\subsubsection{Question 1}
To study $\rho(t)$, we use polar coordinates and introduce a new ODE approach in Section 3. Under the ansatz (\ref{solu}), $\rho(t)$ would satisfy an ODE: 
\begin{equation}\label{rho with lot}
\begin{split}
&\rho'(t)+\frac{3\lambda^2}{4\Omega}\Gamma\r^5+\sum_{k\geq 1} c_k\|\psi\|^4_{L^4_x} \rho(t)^5 \sin k\theta(t)+\sum_{k\geq 1}d_k \|\psi\|^4_{L^4_x}\rho(t)^5 \cos k \theta(t)\\
&+\frac{\lambda}{\Omega}\|\psi\|^4_{L^4_x}\r^3 \sin{2\theta(t)}+\frac{\lambda}{2\Omega}\|\psi\|^4_{L^4_x}\r^3 \sin{4\theta(t)}+l.o.t.=0, 
\end{split}
\end{equation}
where $k$ stands for some positive integer; $c_k$ and $d_k$ are real numbers. Note that we have $\Gamma>0$ due to Fermi's golden rule condition (\ref{1.5}).
\begin{remark}
In this paper, we use $l.o.t.$ short for lower order terms.  And $l.o.t.$ are used to referring remainders, which decay faster than the leading term.  
\end{remark}
Equation (\ref{rho with lot}) implies
\begin{equation}\label{eqn1.13}
\begin{split}
\f{1}{\rho(t)^4}=&\f{(1+\f{3\lambda^2\Gamma}{\Omega}\rho(0)^4 t)}{\rho(0)^4}-\sum_{k}\int_0^t 4c_k \|\psi\|^4_{L^4_x} \sin k\theta(t') dt' \\
&-\sum_{k}\int_0^t 4d_k \|\psi\|^4_{L^4_x} \cos k\theta(t') dt' -\int_0^t \frac{4\lambda}{\Omega}\|\psi\|^4_{L^4_x} \f{\sin{2\theta(t')}}{\rho(t')^2}dt'\\
&-\int_0^t \frac{2\lambda}{\Omega}\|\psi\|^4_{L^4_x} \f{\sin{4\theta(t')}}{\rho(t')^2}dt'+l.o.t.\\
\end{split}
\end{equation}
Here $\sum_{k}$ is a finite sum of some integers $k$.

In Theorem \ref{thm1.2}, we hope to prove $1/\rho(t)^4\approx 1+t$ for $t$ large. This would give both lower and upper bounds for $\rho(t)$. 
Hence we only need to show that, on the right hand side of (\ref{eqn1.13}), the first term $(1+\f{3\lambda^2\Gamma}{\Omega}\rho(0)^4 t)/\rho(0)^4$ dominates. 

Luckily, for $\theta(t)$ we have $\theta(t)=\Omega t+l.o.t.$ When $t$ is large, $\sin k \theta(t)$ and $\cos k \theta(t)$ oscillate. Taking advantage of this observation, we can show that the second and third terms $\int_0^t 4c_k \|\psi\|^4_{L^4_x} \sin k\theta(t') dt' $ and $\int_0^t 4d_k \|\psi\|^4_{L^4_x} \cos k\theta(t') dt'$ are like constants. They are much smaller than the first term $(1+\f{3\lambda^2\Gamma}{\Omega}\rho(0)^4 t)/\rho(0)^4$. 

To deal with $\int_0^t \frac{4\lambda}{\Omega}\|\psi\|^4_{L^4_x} {\sin{2\theta(t')}}/{\rho(t')^2}dt'$ and $\int_0^t \frac{2\lambda}{\Omega}\|\psi\|^4_{L^4_x} {\sin{4\theta(t')}}/{\rho(t')^2}dt'$, we could use integration by {\color{black}parts} and employ the same method as for the second and third terms. However, we encounter an additional difficulty: since in \cite{SW} a lower bound for $\rho(t)$ has not been proved, we cannot rule out the possibility that $\rho(t)$ decays faster than $1/(1+t)^{\f14}$. And this would make $1/\rho(t')^2$ out of control. 

To overcome this difficulty, we construct a parametrix $\rb(t)\geq 0$ in Section 4 through
\begin{equation*}
\rb(t)^4=\f{\rho(0)^4}{1+\f{3\lambda^2\Gamma}{\Omega}\rho(0)^4 t} .
\end{equation*}
One can check $\rb(t)$ satisfies:
\begin{equation*}
\begin{split}
\rb'(t)&=-\f{3\lambda^2}{4\Omega}\Gamma\rb(t)^5,\\
\rb(0)&=\rho(0).
\end{split}
\end{equation*}
Then we introduce the unknown $\epsilon(t)$ through
\begin{equation*}
\r=\rb(t)(1+\epsilon(t)).
\end{equation*}
For initial data, we have $\e(0)=0$. Therefore, the seeking of {\color{black} a} lower bound for $\rho(t)$ is reduced to {\color{black}closing} a bootstrap argument for $\e(t)$ and to show that $\e(t)$ is small for all the time. And this is proved in Section 5. \\

\subsubsection{Question 2}
After deriving the asymptotic behavior for $\rho(t)$, we move to analyze the radiation field $\eta(t,x)$ in $L^2_x$ norm. 

For $\eta$, we have
\begin{equation*}
\eta(t,x)=\eta_1(t,x)+\eta_2(t,x)+\eta_3(t,x),
\end{equation*}
where 
\begin{equation*}
(\partial^2_t+B^2)\eta_1=0, \quad \eta_1(0,x)=P_c u_0, \quad \partial_t \eta_1(0,x)=P_c u_1,
\end{equation*}

\begin{equation}\label{eqn1.17}
(\partial^2_t+B^2)\eta_2=\lambda a^3 P_c \psi^3, \quad \eta_2(0,x)=0, \quad \partial_t \eta_2(0,x)=0,
\end{equation}

\begin{equation*}
(\partial^2_t+B^2)\eta_3=\lambda P_c(3a^2\psi^2\eta+3a\psi\eta^2+\eta^3), \quad \eta_3(0,x)=0, \quad \partial_t \eta_3(0,x)=0.
\end{equation*}
Here $a=a(t):=2\rho(t)\cos\theta(t)$.  Let's first focus on $\eta_2(t,x)$. From (\ref{eqn1.17}), we have

\begin{equation*}
\eta_2(t,x)=\lambda\int_0^t \f{\sin B(t-s)}{B} a^3(s)P_c \psi^3(x) ds. \footnote{Note that $B^2=-\Delta+V(x)+m^2$ has continuous spectrum $\sigma_{cont}(B^2)=[m^2, +\infty)$ and a unique strictly positive single eigenvalue $\Omega^2<m^2$. 
This implies that $1/B$ is a bounded operator for $L^2_x$. }
\end{equation*}
If we only use $|a(t)|^3\leq \rho(t)^3 \approx 1/(1+t)^{\f34}$ and standard dispersive estimates for wave operator (see Theorem 2.1 in \cite{SW}), we cannot even prove that $\eta_2(t,x)\in L^2_x$ for all $t\geq 0$. 

We overcome this difficulty by constructing an auxiliary function $w(t,x)$ through solving
\begin{equation*}
\begin{split}
(i\partial_t +B)w=&\lambda a^3 P_c \psi^3,\\
w(0,x)&=0.
\end{split}
\end{equation*}
Hence
\begin{equation}\label{def of w(t,x)}
w(t,x)=-i\int_0^t e^{iB(t-s)}\lambda a^3(s)P_c \psi^3(x) ds.
\end{equation}

\begin{remark}
At this moment, we don't know whether $w(t,x)\in L^2_x$. But later we will show that $w(t,x) \in L^2_x$. And this is an important step to prove $\eta_2(t,x)\in L^2_x$. 
\end{remark}

Recall $\psi(x)$ is a real valued function and $\|\psi\|_{L^2_x}=1$. From (\ref{def of w(t,x)}), we then have
\begin{equation*}
\mbox{Im}\,w(t,x)=-\int_0^t \sin B(t-s)\,\lambda a^3(s)P_c \psi^3(x) ds,
\end{equation*}
and
\begin{equation*}
\mbox{Re}\,w(t,x)=\int_0^t \cos B(t-s)\,\lambda a^3(s)P_c \psi^3(x) ds.
\end{equation*}
Define
\begin{equation*}
l(t):=\|w(t,x)\|^2_{L^2_x}=\|\mbox{Re}\,w(t,x)\|^2_{L^2_x}+\|\mbox{Im}\,w(t,x)\|^2_{L^2_x}.
\end{equation*}
Since $B$ is a self-adjoint operator, a simple calculation (see details in Section 6) implies 
\begin{equation*}
\begin{split}
\f{d}{dt}l(t)=-2\int_{\mathbb R^3} \mbox{Im}\,w(t,x)\lambda a^3(t)P_c\psi^3(x)dx.
\end{split}
\end{equation*}
Using the definitions of $\mbox{Re}\, u(t,x), \, \mbox{Im}\, u(t,x)$, together with the dispersive estimates for $B$ (see Theorem 2.1 in 
\cite{SW}) and the fact $|a(t)|\leq 1/(1+t)^{-\f14}$, we derive
\begin{equation*}
\begin{split}
\|\mbox{Re}\, w(t,x), \, \mbox{Im}\, w(t,x)\|_{L^8_x}\leq& \f{1}{(1+t)^{\f34}}.
\end{split}
\end{equation*}
With the estimate above, we arrive at
\begin{equation*}
\begin{split} 
l(t)\leq& l(0)+\int_0^t |\f{d}{dt}l(t)|dt\\
\leq&\int_0^t \|\mbox{Im}\, w(t,x)\|_{L^8_x} |a^3(t)| \|P_c\psi^3(x)\|_{L^{\f87}_x} dt\\
\leq& \int_0^t \f{1}{(1+t)^{\f34}}\cdot\f{1}{(1+t)^{\f34}}<+\infty.
\end{split}
\end{equation*}
Therefore, we have proved 
\begin{equation*}
\mbox{Im}\,w(t,x)=\int_0^t \sin B(t-s)\,\lambda a^3(s)P_c \psi^3(x) ds \in L^2_x,
\end{equation*}

\begin{equation*}
\mbox{Re}\,w(t,x)=\int_0^t \cos B(t-s)\,\lambda a^3(s)P_c \psi^3(x) ds \in L^2_x,
\end{equation*}
for any $t>0$. Since $1/B$ is a bounded operator for $L^2_x$, we deduce
\begin{equation*}
\eta_2(t,x)=\lambda\int_0^t \f{\sin B(t-s)}{B} a^3(s)P_c \psi^3(x) ds \in L^2_x. 
\end{equation*}

In order to prove Theorem \ref{thm1.3} and Theorem \ref{thm1.4}, {\color{black}we require a} more detailed analysis for $\eta_2(t,x)$ and $\eta_3(t,x)$ in $L^2_x$ norms. And we will show all the details in Section 6. 

\begin{remark}
The method to construct an auxiliary function $w(t,x)$ comes from considering Klein-Gordon equations as two coupled Dirac-type equations.  To estimate the $L^2_x$ norm of $\eta_2(t,x)$, we use the $L^2_x$ norm of $w(t,x)$ as a bridge. On one {\color{black}hand} $w(t,x)$ is the solution of one Dirac-type equation. It is easier to obtain more precise $L^2_x$ estimates. On the other {\color{black}hand} $w(t,x)$ carries the $L^2_x$ informations of $\eta_2(t,x)$. Based on $w(t,x)\in L^2_x$, we can prove that $\eta_2(t,x)\in L^2_x$.
\end{remark}

\section{Background}

\subsection{Setup}
For a small amplitude solution to (\ref{1.1}) and (\ref{1.2}), it is natural to decompose the solution as follows:

\begin{equation}\label{2.1}
u(x,t)=a(t)\psi(x)+\eta(t,x),
\end{equation}

\begin{equation*}
<\eta(t,x), \psi(x)>=0 \quad \mbox{for all} \quad t,
\end{equation*}
where $<f,g>$ denotes the usual complex inner product in $L^2_x$. 

Substitution of (\ref{2.1}) into (\ref{1.1}) gives
\begin{equation}\label{2.3}
a''(t)\psi(x)+\partial^2_t \eta(t,x)+\O^2 a(t)\psi(x)+B^2 \eta(t,x)=\lambda \big(a(t)\psi(x)+\eta(t,x)\big)^3.
\end{equation}
Taking the inner product of (\ref{2.3}) with $\psi$, we have 
\begin{equation} \label{2.4}
a''(t)+\O^2 a(t)=\lambda <\psi(x), \big(a(t)\psi(x)+\eta(t,x)\big)^3 >.
\end{equation}
Let $P_c$ denote the projection onto the continuous spectrum of $B^2$, i.e.
\begin{equation*}
P_c \nu\equiv \nu-<\psi, \nu>\psi.
\end{equation*}
Since $\eta=P_c\eta$, we then have
\begin{equation} \label{2.6}
\partial^2_t\eta+B^2\eta=\lambda P_c (a\psi+\eta)^3.
\end{equation}
Expand the cubic terms in (\ref{2.4}) and (\ref{2.6}):
\begin{equation} \label{2.7}
a''+\O^2 a=\lambda[a^3\int \psi^4+3a^2\int \psi^3\eta+3a\int \psi^2\eta^2+\int \psi \eta^3],
\end{equation}
\begin{equation*}
\partial^2_t \eta+B^2\eta=\lambda P_c (a^3\psi^3+3a^2\psi^2\eta+3a\psi\eta^2+\eta^3).
\end{equation*}
Recall that we have initial conditions
\begin{equation*}
a(0)=<\psi, u_0>, \quad a'(0)=<\psi, u_1>,
\end{equation*}

\begin{equation*}
\eta(0, x)=P_c u_0, \quad \partial_t \eta(0,x)=P_c u_1.
\end{equation*}

To proceed, we further decompose $\eta(t,x)$ into
\begin{equation}\label{eta}
\eta(t,x)=\eta_1(t,x)+\eta_2(t,x)+\eta_3(t,x),
\end{equation}
where for $\eta_1, \eta_2, \eta_3$ we have

\begin{equation}\label{eta1}
(\partial^2_t+B^2)\eta_1=0, \quad \eta_1(0,x)=P_c u_0, \quad \partial_t \eta_1(0,x)=P_c u_1,
\end{equation}

\begin{equation}\label{eta2}
(\partial^2_t+B^2)\eta_2=\lambda a^3 P_c \psi^3, \quad \eta_2(0,x)=0, \quad \partial_t \eta_2(0,x)=0,
\end{equation}

\begin{equation}\label{eta3}
(\partial^2_t+B^2)\eta_3=\lambda P_c(3a^2\psi^2\eta+3a\psi\eta^2+\eta^3), \quad \eta_3(0,x)=0, \quad \partial_t \eta_3(0,x)=0.
\end{equation}

\subsection{Key Resonant Damping Term}
We expect the function $a(t)$ to consist of oscillations coming from the natural frequency $\O$. We decompose $a(t)$ as

\begin{equation*}
a(t)=A(t)e^{i\O t}+\bar{A}(t)e^{-i\O t}.
\end{equation*}
Here $A(t)$ is a complex-valued function. And $\bar{A}(t)$ is its complex conjugate. 

To turn a second order ODE (\ref{2.7}) for $a(t)$ into a first order ODE for $A(t)$, we follow \cite{SW} and impose a gauge condition to fix $A(t)$:
\begin{equation}\label{gauge}
A'(t)e^{i\O t}+\bar{A}'(t)e^{-i\O t}=0.
\end{equation}
Together with (\ref{gauge}), we have
\begin{equation*}
\begin{split}
&a''(t)+\O^2 a(t)\\
=&[A(t)e^{i\O t}+\bar{A}(t)e^{-i\O t}]''+\O^2 [A(t)e^{i\O t}+\bar{A}(t)e^{-i\O t}]\\
=&[A'(t)e^{i\O t}+\bar{A}'(t)e^{-i\O t}]'(t)+[i\O A(t)e^{i\O t}-i\O \bar{A}(t)e^{-i\O t}]'\\
&+\O^2[A(t)e^{i\O t}+\bar{A}(t)e^{-i\O t}]\\
=&[i\O A'(t)e^{i\O t}-\O^2 A(t)e^{i\O t}-i\O \bar{A}'(t)e^{-i\O t}- \O^2 \bar{A}(t)e^{-i\O t}]\\
&+\O^2[A(t)e^{i\O t}+\bar{A}(t)e^{-i\O t}]\\
=&i\O A'(t)e^{i\O t}-i\O \bar{A}'(t)e^{-i\O t}\\
=&2i \O A'(t) e^{i\O t}.
\end{split}
\end{equation*}
Equation (\ref{2.7}) then is reduced to the first order equation:
\begin{equation}\label{2.17}
A'(t)=(2i\O)^{-1}e^{-i\O t} F(a,\eta),
\end{equation}
where $F(a,\eta)$ is the sum of the following terms
\begin{equation*}
\begin{split}
F_1(a)=&\lambda a^3\int \psi^4,\\
F_2(a, \eta_j)=&3\lambda a^2\int \psi^3 \eta_j, \quad j=1,2,3\\
F_3(a, \eta)=&3 \lambda a\int \psi^2 \eta^2,\\
F_4(\eta)=&\lambda \int \psi \eta^3.\\
\end{split}
\end{equation*}

There is a key resonant term arising from $F_2(a,\eta_2)$. We analyze this key term. And for the detailed expression of other terms, we cite \cite{SW}. From (\ref{eta2}) we have
\begin{equation*}
\begin{split}
\eta_2=&\lambda \int_0^t\frac{\sin B(t-s)}{B} a^3(s)ds P_c \psi^3.\\
\end{split}
\end{equation*}
We first break $\eta_2$ into two parts: $\eta^r_2$ and $\eta^{nr}_2$. Here $\eta^r_2$ contains the resonance and $\eta^{nr}_2$ is the non-resonant part.

\begin{proposition}
\begin{equation*}
\eta_2=\eta_2^r+\eta^{nr}_2,
\end{equation*}
where
\begin{equation*}
\eta^r_2=\f{\lambda}{2iB}e^{iBt}\int_0^t e^{-is(B-3\O)}A^3(s)ds P_c\psi^3,
\end{equation*}
and
\begin{equation*}
\begin{split}
\eta^{nr}_2=&\f{3\lambda}{2iB}e^{iBt}\int_0^t e^{-is(B-\O)}A^2(s)\bar{A}(s)ds P_c\psi^3\\
&+\f{3\lambda}{2iB}e^{iBt}\int_0^t e^{-is(B+\O)}\bar{A}^2(s)A(s)ds P_c\psi^3\\
&+\f{\lambda}{2iB}e^{iBt}\int_0^t e^{-is(B+3\O)}\bar{A}^3(s)ds P_c\psi^3\\
&-\f{\lambda}{2iB}e^{-iBt}\int_0^t e^{is(B+3\O)}A^3(s)ds P_c\psi^3\\
&-\f{3\lambda}{2iB}e^{-iBt}\int_0^t e^{is(B+\O)}A^2(s)\bar{A}(s)ds P_c\psi^3\\
&-\f{3\lambda}{2iB}e^{-iBt}\int_0^t e^{is(B-\O)}\bar{A}^2(s)A(s)ds P_c\psi^3\\
&-\f{\lambda}{2iB}e^{-iBt}\int_0^t e^{is(B-3\O)}\bar{A}^3(s)ds P_c\psi^3\\
\equiv& \sum_{j=1}^7 \eta_{2j}^{nr}.
\end{split}
\end{equation*}
\end{proposition}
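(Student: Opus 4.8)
\proof (Sketch.) The plan is a direct computation starting from the Duhamel formula just displayed. Since $B=(-\Delta+V+m^2)^{1/2}\ge\Omega>0$, the operator $B^{-1}$ is bounded on $L^2_x$, the propagators $e^{\pm iBt}$ are unitary on $L^2_x$, and $P_c\psi^3\in L^2_x$ because $\psi$ is the eigenfunction of $-\Delta+V+m^2$ for the eigenvalue $\Omega^2<m^2$ below the essential spectrum, hence exponentially decaying. Solving (\ref{eta2}) with zero initial data therefore gives
\[
\eta_2(t,x)=\lambda\int_0^t\frac{\sin B(t-s)}{B}\,a^3(s)\,ds\,P_c\psi^3(x).
\]
First I would write $\sin B(t-s)=\frac{1}{2i}\big(e^{iB(t-s)}-e^{-iB(t-s)}\big)$ and, using the functional calculus for the self-adjoint operator $B$ to pull the $t$-dependent propagators outside the $s$-integral, rewrite this as
\[
\eta_2=\frac{\lambda}{2iB}e^{iBt}\int_0^t e^{-iBs}a^3(s)\,ds\,P_c\psi^3-\frac{\lambda}{2iB}e^{-iBt}\int_0^t e^{iBs}a^3(s)\,ds\,P_c\psi^3.
\]

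Next I would substitute $a(s)=A(s)e^{i\Omega s}+\bar A(s)e^{-i\Omega s}$ and expand the cube binomially,
\[
a^3(s)=A^3e^{3i\Omega s}+3A^2\bar A\,e^{i\Omega s}+3A\bar A^2\,e^{-i\Omega s}+\bar A^3e^{-3i\Omega s},\qquad A=A(s).
\]
Inserting this produces $2\times 4=8$ integrals, and in each the $s$-dependent exponentials combine into a single phase: the $e^{iB(t-s)}$ half yields the four terms $e^{iBt}\int_0^t e^{-is(B-3\Omega)}A^3$, $e^{iBt}\int_0^t e^{-is(B-\Omega)}A^2\bar A$, $e^{iBt}\int_0^t e^{-is(B+\Omega)}\bar A^2A$, $e^{iBt}\int_0^t e^{-is(B+3\Omega)}\bar A^3$, while the $-e^{-iB(t-s)}$ half yields the four terms carrying $e^{-iBt}$ with phases $e^{is(B+3\Omega)}$, $e^{is(B+\Omega)}$, $e^{is(B-\Omega)}$, $e^{is(B-3\Omega)}$; all are multiplied by $P_c\psi^3$ and carry the numerical prefactors $\frac{\lambda}{2iB}$, $\frac{3\lambda}{2iB}$, $\frac{3\lambda}{2iB}$, $\frac{\lambda}{2iB}$ inherited from the binomial coefficients. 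Matching these eight expressions against the list in the statement is then immediate and gives $\eta_2=\eta_2^r+\sum_{j=1}^7\eta_{2j}^{nr}$.

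It remains only to justify the labelling. The term singled out as $\eta_2^r=\frac{\lambda}{2iB}e^{iBt}\int_0^t e^{-is(B-3\Omega)}A^3(s)\,ds\,P_c\psi^3$ has internal phase $B-3\Omega$, which vanishes on the continuous spectrum $\sigma_{cont}(B)=[m,+\infty)$ exactly at the Fermi–Golden–Rule frequency $3\Omega$; note $3\Omega\in[m,+\infty)$ because the hypothesis $\Gamma=\frac{\pi}{3\Omega}|(\mathcal F_c\psi^3)(3\Omega)|^2>0$ forces it, while $\Omega<m$. This is the piece that does not oscillate away and, when reinserted through $F_2(a,\eta_2)$ into the amplitude equation (\ref{2.17}), produces the secular contribution proportional to $\lambda^2\Gamma$ (the resonant damping of (\ref{1.5})). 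The remaining seven terms are genuinely non-resonant: six carry internal phases $B\pm\Omega$ or $B+3\Omega$ bounded away from $0$ on $\sigma(B)$, and although $\eta_{27}^{nr}$ also carries the phase $B-3\Omega$, after its loop back through $a^2$ and the gauge factor $e^{-i\Omega t}$ in (\ref{2.17}) it leaves a net oscillating exponential $e^{-2i\Omega t}$ rather than a secular term. I do not expect any real obstacle here: the content is algebraic bookkeeping, and the only points deserving a word are the boundedness of $B^{-1}$, the $L^2_x$-membership of $P_c\psi^3$, and the legitimacy of commuting $e^{\pm iBt}$ with the $s$-integration via the spectral theorem. \qed
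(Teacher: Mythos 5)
Your proof is correct and follows the approach the paper intends: apply the Duhamel formula, write $\sin B(t-s)=\frac{1}{2i}(e^{iB(t-s)}-e^{-iB(t-s)})$, insert the binomial expansion of $a^3(s)=(A e^{i\Omega s}+\bar A e^{-i\Omega s})^3$, and absorb the $s$-dependent exponentials by functional calculus into the eight phases $e^{\mp is(B\mp k\Omega)}$, $k\in\{1,3\}$, matching each resulting term against the displayed list. The paper states this as a proposition without a displayed proof precisely because it is the algebraic bookkeeping you describe, and your supplementary remarks on boundedness of $B^{-1}$, on $P_c\psi^3\in L^2_x$, and on why only the first term is resonant (the $e^{iBt}$ versus $e^{-iBt}$ prefactor is what makes $\eta_{27}^{nr}$ harmless even though it carries the internal phase $B-3\Omega$) are all accurate.
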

We now focus on $\eta_2^r$. In order to study $\eta^r_2$ near the resonant point $3\O$ in the continuous spectrum of $B$, we introduce a regularization of $\eta^r_2$.
For $\epsilon>0$, let
\begin{equation*}
\eta^r_{2\epsilon}=\f{\lambda}{2iB}e^{iBt}\int_0^t e^{-is(B-3\O+i\epsilon)}A^3(s)ds P_c\psi^3.
\end{equation*}
Note that $\eta^{r}_2=\lim_{\epsilon\rightarrow 0} \eta^r_{2\epsilon}$ in the sense of a distribution acts on a function. \\

The following result, proved using integration by parts, isolates the key resonant term. Denote $A_0=A(0)$. We have

\begin{proposition}
For $\epsilon> 0$,
\begin{equation}\label{2.24}
\begin{split}
\eta^r_{2\epsilon}=&\f{\lambda}{2}[B(B-3\O+i\epsilon)]^{-1}e^{3i\O t} A^3(t) e^{\epsilon t} P_c\psi^3\\
&-\f{\lambda}{2}A_0^3[B(B-3\O+i\epsilon)]^{-1}e^{iB t} P_c\psi^3\\
&-\f{3\lambda}{2}[B(B-3\O+i\e)]^{-1}e^{iBt}\int_0^t e^{-is(B-3\O+i\e)}A^2 A' ds P_c \psi^3\\
=&\eta^r_{*\e}+\eta^{nr1}_{*\e}+\eta^{nr2}_{*\e}.
\end{split}
\end{equation}
\end{proposition}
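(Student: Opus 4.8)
The plan is to compute $\eta^r_{2\e}$ directly by integrating by parts in the $s$-integral, using the exponential $e^{-is(B-3\O+i\e)}$ as the factor to be integrated and $A^3(s)$ as the factor to be differentiated. Recall
\[
\eta^r_{2\e}=\f{\lambda}{2iB}e^{iBt}\int_0^t e^{-is(B-3\O+i\e)}A^3(s)\,ds\,P_c\psi^3 .
\]
Since $\f{d}{ds}\Big[\f{e^{-is(B-3\O+i\e)}}{-i(B-3\O+i\e)}\Big]=e^{-is(B-3\O+i\e)}$ as an operator identity on $P_c L^2_x$ (the operator $B-3\O+i\e$ is boundedly invertible on the range of $P_c$ for $\e>0$, and for $\e=0$ the identity is understood in the regularized sense as in the paragraph preceding the statement), integration by parts gives three terms: the boundary term at $s=t$, the boundary term at $s=0$, and the new integral with $A^3$ replaced by $\f{d}{ds}A^3(s)=3A^2(s)A'(s)$.

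First I would record the boundary term at $s=t$: it produces
\[
\f{\lambda}{2iB}\,e^{iBt}\cdot\f{e^{-it(B-3\O+i\e)}}{-i(B-3\O+i\e)}A^3(t)\,P_c\psi^3
=\f{\lambda}{2}\,[B(B-3\O+i\e)]^{-1}e^{3i\O t}e^{\e t}A^3(t)\,P_c\psi^3,
\]
using $e^{iBt}e^{-itB}=\mathrm{Id}$ and $e^{-it(3\O+i\e)\cdot(-1)}=e^{3i\O t}e^{\e t}$ after tracking signs; this is $\eta^r_{*\e}$, the resonant term. Next, the boundary term at $s=0$ contributes
\[
-\f{\lambda}{2iB}\,e^{iBt}\cdot\f{1}{-i(B-3\O+i\e)}A_0^3\,P_c\psi^3
=-\f{\lambda}{2}\,A_0^3\,[B(B-3\O+i\e)]^{-1}e^{iBt}\,P_c\psi^3,
\]
which is $\eta^{nr1}_{*\e}$. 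Finally, the remaining integral is
\[
-\f{\lambda}{2iB}\,e^{iBt}\int_0^t \f{e^{-is(B-3\O+i\e)}}{-i(B-3\O+i\e)}\,3A^2(s)A'(s)\,ds\,P_c\psi^3
=-\f{3\lambda}{2}\,[B(B-3\O+i\e)]^{-1}e^{iBt}\int_0^t e^{-is(B-3\O+i\e)}A^2A'\,ds\,P_c\psi^3,
\]
which is $\eta^{nr2}_{*\e}$. Summing the three pieces yields (2.24).

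The only point requiring care—and the main obstacle—is the justification of the operator manipulations near the resonant frequency $3\O$, where $B-3\O$ is not invertible on $P_c L^2_x$. For $\e>0$ everything is literally an identity between bounded operators applied to the fixed vector $P_c\psi^3$, and the integration by parts is elementary; one should verify that $\|[B(B-3\O+i\e)]^{-1}P_c\psi^3\|$ stays controlled (this uses (V1)–(V3), the limiting absorption principle for $B$, and the local smoothness of $\mathcal F_c\psi^3$ near $3\O$ implicit in the Fermi Golden Rule condition (1.5)), so that the $\e\to 0$ limit giving the $\e=0$ case of the formula is meaningful. I would present the $\e>0$ computation cleanly and then remark that all three terms have limits as $\e\to 0^+$ in the appropriate weighted $L^2_x$ space, which is exactly the framework already set up in \cite{SW}; no new analytic input beyond bookkeeping of the exponential factors is needed.
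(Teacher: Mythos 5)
Your proof is correct and follows exactly the route the paper indicates: the paper states that Proposition 2.2 is ``proved using integration by parts,'' and your computation carries that out, with the exponential $e^{-is(B-3\O+i\e)}$ antidifferentiated to $\f{e^{-is(B-3\O+i\e)}}{-i(B-3\O+i\e)}$ and $A^3(s)$ differentiated to $3A^2A'$. The sign and operator bookkeeping in all three resulting terms (using $\f{1}{2iB}\cdot\f{1}{-i(B-3\O+i\e)}=\f{1}{2B(B-3\O+i\e)}$ and $e^{iBt}e^{-it(B-3\O+i\e)}=e^{3i\O t}e^{\e t}$) checks out and reproduces $\eta^r_{*\e}$, $\eta^{nr1}_{*\e}$, $\eta^{nr2}_{*\e}$ precisely, so this is the same argument the paper has in mind.
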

Note $F_2(a, \eta_2)=3\lambda a^2\int_{\mathbb{R}^3} \psi^3(x) \eta_2(t,x)dx$. For $F_2(a, \eta_2)$ we have
\begin{equation*}
\begin{split}
F_2(a,\eta_2)=& F_2(a, \eta^r_2)+F_2(a, \eta_2^{nr})\\
=&\lim_{\e\to 0} F_2(a, \eta^r_{*\e})+\lim_{\e\to 0} F_2 (a, \eta^{nr1}_{*\e}+\eta^{nr2}_{*\e})+F_2(a, \eta^{nr}_2)\\
=&F_2(a, \eta^{r}_*)+F_2(a, \eta^{nr1}_{*}+\eta^{nr2}_{*})+F_2(a,\eta^{nr}_2).
\end{split}
\end{equation*}
What follows now is a detailed expansion of the term $\lim_{\e\to 0} F_2(a, \eta^r_{*\e})$. Let

\begin{equation*}
\begin{split}
\Lambda:=& \lim_{\e\to 0} <P_c \psi^3, \f{1}{B} \f{B-3\O}{(B-3\O)^2+\e^2} P_c \psi^3>\\
=&<P_c \psi^3, \f{1}{B} P.V. \f{1}{B-3\O} P_c \psi^3>, \mbox{and}
\end{split}
\end{equation*}

\begin{equation*}
\begin{split}
\Gamma:=& \lim_{\e\to 0}<P_c \psi^3, \f{1}{B} \f{\e}{(B-3\O)^2+\e^2} P_c \psi^3>\\
=&\f{\pi}{3\O}<P_c \psi^3, \d(B-3\O) P_c \psi^3>\\
=& \f{\pi}{3\O}|\mathcal{F}_c[\psi^3](3\O)|^2.
\end{split}
\end{equation*}
By hypothesis (\ref{1.5}), it follows $\Gamma>0$. We now substitute the expression for $\eta^r_{*\e}$. Put  (\ref{2.24}) into the definition of $F_2(a, \eta^r_{*\e})$. Let $\e \to 0$ and use the distributional identity:

\begin{equation*}
(x\pm i0)^{-1}:= \lim_{\e\to 0} (x\pm i\e)^{-1}= P.V. x^{-1}\mp i\pi \d(x).
\end{equation*}
It yields

\begin{equation}\label{2.29}
\begin{split}
F_2(a, \eta^r_{*}):=&\lim_{\e \to 0} F_2(a, \eta^r_{*\e})\\
=&\f32 \lambda^2 (\Lambda-i\Gamma)[|A|^4A e^{i\O t}+A^5 e^{5i\O t}+2|A|^2A^3 e^{3i\O t}].
\end{split}
\end{equation}
Multiplying (\ref{2.29}) by $(2i\O)^{-1} e^{-i\O t}$, we see that
\begin{proposition}
\begin{equation*}
(2i\O)^{-1} e^{-i\O t} F_2(a, \eta^r_{*})=-\f34 \f{\lambda^2}{\O} (i\Lambda+\Gamma)[|A|^4A +A^5 e^{4i\O t}+2|A|^2A^3 e^{2i\O t}].
\end{equation*}
\end{proposition}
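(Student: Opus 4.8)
This proposition is a purely algebraic consequence of the identity \eqref{2.29} for $F_2(a,\eta^r_*)$ established just above it, so the plan is simply to multiply through and simplify. First I would take the expression
\begin{equation*}
F_2(a, \eta^r_{*})=\f32 \lambda^2 (\Lambda-i\Gamma)\big[|A|^4A\, e^{i\O t}+A^5 e^{5i\O t}+2|A|^2A^3 e^{3i\O t}\big]
\end{equation*}
and multiply it by the factor $(2i\O)^{-1} e^{-i\O t}$. Distributing $e^{-i\O t}$ across the bracket lowers each exponent by one unit of $i\O t$, turning $e^{i\O t}\mapsto 1$, $e^{5i\O t}\mapsto e^{4i\O t}$, and $e^{3i\O t}\mapsto e^{2i\O t}$, which already produces the bracket $[|A|^4A +A^5 e^{4i\O t}+2|A|^2A^3 e^{2i\O t}]$ appearing on the right-hand side.

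It then remains to handle the scalar prefactor $\dfrac{1}{2i\O}\cdot\dfrac32\lambda^2(\Lambda-i\Gamma)=\dfrac{3\lambda^2}{4\O}\cdot\dfrac{\Lambda-i\Gamma}{i}$. Here I would use $\dfrac{1}{i}=-i$ to write $\dfrac{\Lambda-i\Gamma}{i}=-i(\Lambda-i\Gamma)=-i\Lambda-\Gamma=-(i\Lambda+\Gamma)$, which gives exactly the coefficient $-\dfrac34\dfrac{\lambda^2}{\O}(i\Lambda+\Gamma)$ claimed in the statement. Combining the two computations yields the asserted formula.

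There is no real obstacle here: the content of the proposition is entirely bookkeeping, and the only point requiring any care is the sign in $\dfrac{1}{i}=-i$ (equivalently, recognizing that $-i(\Lambda-i\Gamma)=-(i\Lambda+\Gamma)$ rather than $i\Lambda-\Gamma$). The genuine analytic work — the regularization $\eta^r_{2\e}$, the integration by parts of Proposition~2.4, and passing to the limit $\e\to0$ via the distributional identity $(x\pm i0)^{-1}=\mathrm{P.V.}\,x^{-1}\mp i\pi\d(x)$ to obtain \eqref{2.29} with the constants $\Lambda$ and $\Gamma$ — has all been carried out already, so this final step is just the multiplication that isolates the contribution of $F_2(a,\eta^r_*)$ to the amplitude equation \eqref{2.17}.
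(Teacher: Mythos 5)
Your proposal is correct and follows the paper's route exactly: the paper also obtains the proposition by multiplying (\ref{2.29}) by $(2i\O)^{-1}e^{-i\O t}$, shifting the phases in the bracket by $e^{-i\O t}$, and simplifying $(2i\O)^{-1}\cdot\f32\lambda^2(\Lambda-i\Gamma)=-\f{3\lambda^2}{4\O}(i\Lambda+\Gamma)$ via $1/i=-i$. There is nothing to add; the paper states this as a one-line consequence of (\ref{2.29}) and you have simply spelled out the arithmetic.
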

With (\ref{2.17}):
\begin{equation*}
A'(t)=(2i\O)^{-1}e^{-i\O t} F(a,\eta),
\end{equation*}
this proposition implies
\begin{equation}\label{2.31}
\begin{split}
A'(t)=&-\f34 \f{\lambda^2 \Gamma}{\O}|A|^4A-\f{3i}{4} \f{\lambda^2 \Lambda}{\O}|A|^4A-\f34 \f{\lambda^2}{\O} (i\Lambda+\Gamma)[A^5 e^{4i\O t}+2|A|^2A^3 e^{2i\O t}]\\
&+\mbox{remainder terms.}
\end{split}
\end{equation}
\begin{remark}Since $\Gamma, \Omega$ are positive, $-\f34 \f{\lambda^2 \Gamma}{\O}|A|^4A$ is the key resonant damping term for (\ref{2.31}).\end{remark}

\section{A New ODE Approach}
Define $$\underline{\rho}(\zeta):=<P_c \psi^3, B^{-1}(B-\zeta)^{-1}P_c\psi^3>.$$
The complete description of (\ref{2.31}) is as follows (see also section 4 of \cite{SW}): 

 \begin{proposition}
 The amplitude $A(t)$ satisfies the equation
 \begin{equation}\label{3.1}
\begin{split}
&A'(t)+\frac{3\lambda^2}{4\Omega}\Gamma|A|^4A\\
&+\frac{i\lambda}{2\Omega}\|\psi\|^4_4 3|A|^2A+\frac{3i\lambda^2}{4\Omega}[\Lambda-5\underline{\rho}(\Omega)+3\underline{\rho}(-\Omega)+\underline{\rho}(-3\Omega)]|A|^4A\\
=&\frac{-i\lambda}{2\Omega}\|\psi\|^4_4(A^3e^{2i\Omega t}+3|A|^2\bar{A}e^{-2i\Omega t}+\bar{A}^3e^{-4i\Omega t})\\
&-\f{3\lambda^2}{4\Omega}A^5 e^{4i\O t}[i\Lambda+\Gamma-\underline{\rho}(-3\Omega)]\\
&-\f{3i\lambda^2}{4\Omega}|A|^2A^3e^{2i\Omega t}[-2\Lambda+2i\Gamma+\underline{\rho}(\Omega)+i\underline{\rho}(-3\Omega)+3\underline{\rho}(-\Omega)]\\
&-\f{3i\lambda^2}{4\Omega}|A|^4\bar{A}e^{-2i\Omega t}[7\underline{\rho}(\Omega)+9\underline{\rho}(-\Omega)+\underline{\rho}(-3\Omega)+\underline{\rho}(3\Omega+i0)]\\
&-\f{3i\lambda^2}{4\Omega}|A|^2\bar{A}^3e^{-4i\Omega t}[3\underline{\rho}(-\Omega)-2i\underline{\rho}(-3\Omega)+\underline{\rho}(3\Omega+i0)+3\underline{\rho}(\Omega)]\\
&-\f{3i\lambda^2}{4\Omega}\bar{A}^5e^{-6i\Omega t}[i\underline{\rho}(-3\Omega)-\frac43 i\underline{\rho}(3\Omega+i0)]+E,
\end{split}
\end{equation}
 where
\begin{equation*}
\begin{split}
E=&(2i\O)^{-1}e^{-i\O t}3\lambda a\int \psi\eta^2+(2i\O)^{-1}\lambda e^{-i\O t}\int \psi \eta^3+\sum_{j=1}^7 E_{2j}^{nr}\\
&+(2i\O)^{-1}e^{-i\O t}[F_2(a,\eta_1)+F_2(a,\eta_3)]+(2i\O)^{-1}e^{-i\O t}F_2(a, \eta_{*}^{nr1}+\eta_{*}^{nr2}).
\end{split}
\end{equation*}
The detailed expression of $E^{nr}_{2j}$ could also be found on page 43 in \cite{SW}.

 \end{proposition}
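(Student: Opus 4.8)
The plan is to obtain \eqref{3.1} by a direct, if lengthy, computation: expand the right-hand side of the first-order reduction \eqref{2.17}, $A'(t)=(2i\Omega)^{-1}e^{-i\Omega t}F(a,\eta)$ with $F=F_1(a)+\sum_{j=1}^{3}F_2(a,\eta_j)+F_3(a,\eta)+F_4(\eta)$, substitute $a(t)=A(t)e^{i\Omega t}+\bar A(t)e^{-i\Omega t}$ everywhere, and sort the resulting monomials $|A|^{2m}A^{n}\bar A^{p}$ according to the time phase $e^{i\ell\Omega t}$ they carry. The \emph{secular} monomials (net phase $\ell=0$, i.e. those proportional to $|A|^{2}A$ and $|A|^{4}A$) are kept on the left-hand side of \eqref{3.1}; the purely oscillatory monomials $A^{3}e^{2i\Omega t}$, $|A|^{2}\bar Ae^{-2i\Omega t}$, $\bar A^{3}e^{-4i\Omega t}$ and $A^{5}e^{4i\Omega t}$, $|A|^{2}A^{3}e^{2i\Omega t}$, $|A|^{4}\bar Ae^{-2i\Omega t}$, $|A|^{2}\bar A^{3}e^{-4i\Omega t}$, $\bar A^{5}e^{-6i\Omega t}$ are displayed on the right; and everything that decays strictly faster is swept into the error $E$.

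The first and easiest step is $F_1(a)=\lambda a^{3}\int\psi^{4}=\lambda\|\psi\|_{4}^{4}\big(A^{3}e^{3i\Omega t}+3|A|^{2}Ae^{i\Omega t}+3|A|^{2}\bar Ae^{-i\Omega t}+\bar A^{3}e^{-3i\Omega t}\big)$; multiplying by $(2i\Omega)^{-1}e^{-i\Omega t}=-\tfrac{i}{2\Omega}e^{-i\Omega t}$ produces the secular term $-\tfrac{3i\lambda}{2\Omega}\|\psi\|_{4}^{4}|A|^{2}A$ (which, moved to the left, is the $\tfrac{i\lambda}{2\Omega}\|\psi\|_{4}^{4}\,3|A|^{2}A$ of \eqref{3.1}) and the three oscillatory terms $-\tfrac{i\lambda}{2\Omega}\|\psi\|_{4}^{4}\big(A^{3}e^{2i\Omega t}+3|A|^{2}\bar Ae^{-2i\Omega t}+\bar A^{3}e^{-4i\Omega t}\big)$ on the right. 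For $F_2(a,\eta_2)$ I would use the decomposition $\eta_2=\eta_2^{r}+\eta_2^{nr}$, then the integration-by-parts identity \eqref{2.24} to split $\eta_2^{r}=\eta_*^{r}+\eta_*^{nr1}+\eta_*^{nr2}$ in the limit $\epsilon\to0$, so that $F_2(a,\eta_2)=F_2(a,\eta_*^{r})+F_2(a,\eta_*^{nr1}+\eta_*^{nr2})+F_2(a,\eta_2^{nr})$. The already-proven proposition $(2i\Omega)^{-1}e^{-i\Omega t}F_2(a,\eta_*^{r})=-\tfrac34\tfrac{\lambda^{2}}{\Omega}(i\Lambda+\Gamma)\big[|A|^{4}A+A^{5}e^{4i\Omega t}+2|A|^{2}A^{3}e^{2i\Omega t}\big]$ then delivers the key resonant damping term $-\tfrac34\tfrac{\lambda^{2}\Gamma}{\Omega}|A|^{4}A$, its companion phase term $-\tfrac{3i}{4}\tfrac{\lambda^{2}\Lambda}{\Omega}|A|^{4}A$, and the part of the $A^{5}e^{4i\Omega t}$ and $|A|^{2}A^{3}e^{2i\Omega t}$ coefficients equal to $i\Lambda+\Gamma$; the residual $F_2(a,\eta_*^{nr1}+\eta_*^{nr2})$ carries an extra factor $A'$ or the initial amplitude $A_0$, hence is lower order and joins $E$.

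The substantial part is the seven non-resonant pieces $\eta_{2j}^{nr}$, $j=1,\dots,7$. For each $j$ I would insert $a^{2}=A^{2}e^{2i\Omega t}+2|A|^{2}+\bar A^{2}e^{-2i\Omega t}$ into $F_2(a,\eta_{2j}^{nr})=3\lambda a^{2}\int\psi^{3}\eta_{2j}^{nr}$, pair $\psi^{3}$ against the kernel $P_c\psi^{3}$, and integrate by parts once in $s$ in the oscillatory integral $\int_0^{t}e^{\mp is(B\pm k\Omega)}A^{\alpha}\bar A^{\beta}\,ds$ ($k\in\{1,3\}$): the boundary contribution at $s=t$ yields $\pm[B(B-\zeta)]^{-1}$ against $P_c\psi^{3}$, i.e. exactly $\underline{\rho}(\zeta)$ with $\zeta\in\{\Omega,-\Omega,-3\Omega\}$ (and $\zeta=3\Omega+i0$ at the resonant value, identified through $(x\pm i0)^{-1}=\mathrm{P.V.}\,x^{-1}\mp i\pi\delta(x)$ and the relations defining $\Lambda,\Gamma$), times the current monomial in $A,\bar A$; the boundary contribution at $s=0$ is an initial-data term, and the remaining integral has gained a factor $A'$. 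Collecting the $s=t$ contributions by monomial type, and doing the elementary arithmetic with the binomial weights coming from $a^{2}$, assembles precisely the $\underline{\rho}$-combinations in \eqref{3.1}: $\Lambda-5\underline{\rho}(\Omega)+3\underline{\rho}(-\Omega)+\underline{\rho}(-3\Omega)$ multiplying the secular $|A|^{4}A$ on the left, and the bracketed coefficients of $A^{5}e^{4i\Omega t}$, $|A|^{2}A^{3}e^{2i\Omega t}$, $|A|^{4}\bar Ae^{-2i\Omega t}$, $|A|^{2}\bar A^{3}e^{-4i\Omega t}$, $\bar A^{5}e^{-6i\Omega t}$ on the right (for instance $i\Lambda+\Gamma-\underline{\rho}(-3\Omega)$ for $A^{5}e^{4i\Omega t}$ and $i\underline{\rho}(-3\Omega)-\tfrac43 i\underline{\rho}(3\Omega+i0)$ for $\bar A^{5}e^{-6i\Omega t}$); all the $s=0$ terms and the $A'$-gained integrals become, by definition, $\sum_{j=1}^{7}E_{2j}^{nr}$. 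Finally $F_2(a,\eta_1)$ (pure free evolution), $F_2(a,\eta_3)$, $F_3(a,\eta)=3\lambda a\int\psi\eta^{2}$ and $F_4(\eta)=\lambda\int\psi\eta^{3}$ require no extraction — being already lower order, they are simply appended to $E$, which reproduces the list displayed after \eqref{3.1}. I expect the only real obstacle to be the \emph{bookkeeping}: tracking exactly the binomial coefficients from $a^{2}$ and $a^{3}$, the $(2i\Omega)^{-1}$ and $(2iB)^{-1}$ prefactors, the sign of every phase $e^{\pm is(B\pm k\Omega)}$, and which of the five arguments $\zeta$ each $\eta_{2j}^{nr}$ produces — which is precisely the verification carried out in \S4 and on p.~43 of \cite{SW}.
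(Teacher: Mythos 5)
The proposal is correct and follows essentially the same route the paper takes: reduce to the first-order equation \eqref{2.17}, expand $F=F_1+\sum_j F_2(a,\eta_j)+F_3+F_4$ in powers of $A,\bar A$ with phase tracking, extract the resonant piece of $F_2(a,\eta_2)$ via $\eta_2=\eta_2^r+\eta_2^{nr}$ and the integration-by-parts identity \eqref{2.24} (giving the $\Gamma$ and $\Lambda$ terms through \eqref{2.29}), integrate by parts once in $s$ in the seven pieces $\eta_{2j}^{nr}$ to surface the $\underline{\rho}(\pm\Omega),\underline{\rho}(\pm3\Omega+i0)$ boundary contributions, and dump everything that carries an extra factor of $A'$, an initial-data term, or involves $\eta_1,\eta_3,F_3,F_4$ into $E$. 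The paper itself states the result and cites Section~4 of \cite{SW} for the bookkeeping rather than reproducing it, so your plan is not a genuinely different proof but a faithful reconstruction of the computation the paper points to; your acknowledgement that the only real work is the arithmetic of binomial weights, $(2i\Omega)^{-1}$ and $(2iB)^{-1}$ prefactors, and phase signs is exactly right.
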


 In \cite{SW}, Soffer and Weinstein constructed a smooth near-identity change of variables $A(t)\rightarrow \tilde{A}(t)$ with the properties
 \[
 \tilde{A}=A+h(A,t),
 \]

 \[
 h(A,t)=O(|A|^3), |A|\to 0,
 \]

 \[
 h(A,t)=h(A, t+2\pi \O^{-1}).
 \]
 And in terms of $\tilde{A}$, (\ref{3.1}) becomes
 \begin{equation*}
\begin{split}
 \tilde{A}'=i\lambda c_{21}|\tilde{A}|^2\tilde{A}+\lambda^2 d_{32}|\tilde{A}|^4\tilde{A}+i\lambda^2 c_{32} |\tilde{A}|^4 \tilde{A}+\mathcal{O}(|\tilde{A}|^7)+l.o.t.,
 \end{split}
\end{equation*}
where $\lambda^2 d_{32}=-\f34 \frac{\lambda^2}{\O}\Gamma<0$ and the constants $c_{21}$ and $c_{32}$ are real numbers.
This implies
\begin{equation}\label{3.4}
\partial_t |\tilde{A}|^2=-\f{3}{8}\f{\lambda^2}{\O}\Gamma |\tilde{A}|^6+\mathcal{O}(|\tilde{A}|^8)+l.o.t..
\end{equation}
Together with ODE analysis based on (\ref{3.4}) and smallness assumption on $|\tilde{A}(0)|$ , Soffer and Weinstein proved an upper bound for $|\tilde{A}|(t)$ in \cite{SW}. This implies an upper bound for $|A|(t)$.

In this section, we introduce a new approach without using near-identity change of variables. The new approach is more explicit and will give both lower and upper bounds for $|A|(t)$. We multiply $e^{i\Omega t}$ on both sides of (\ref{3.1}) and arrive at
\begin{equation}\label{For X(t)}
\begin{split}
&e^{i\Omega t}A'(t)+\frac{3\lambda^2}{4\Omega}\Gamma|A|^4Ae^{i\Omega t}\\
&+\frac{i\lambda}{2\Omega}\|\psi\|^4_4 3|A|^2Ae^{i\Omega t}+\frac{3i\lambda^2}{4\Omega}[\Lambda-5\underline{\rho}(\Omega)+3\underline{\rho}(-\Omega)+\underline{\rho}(-3\Omega)]|A|^4Ae^{i\Omega t}\\
&+\frac{i\lambda}{2\Omega}\|\psi\|^4_4(A^3e^{3i\Omega t}+3|A|^2\bar{A}e^{-i\Omega t}+\bar{A}^3e^{-3i\Omega t})\\
=&-\f{3\lambda^2}{4\Omega}A^5e^{5i\Omega t}[i\Lambda+\Gamma-\underline{\rho}(-3\Omega)]\\
&-\f{3i\lambda^2}{4\Omega}|A|^2A^3e^{3i\Omega t}[-2\Lambda+2i\Gamma+\underline{\rho}(\Omega)+i\underline{\rho}(-3\Omega)+3\underline{\rho}(-\Omega)]\\
&-\f{3i\lambda^2}{4\Omega}|A|^4\bar{A}e^{-i\Omega t}[7\underline{\rho}(\Omega)+9\underline{\rho}(-\Omega)+\underline{\rho}(-3\Omega)+\underline{\rho}(3\Omega+i0)]\\
&-\f{3i\lambda^2}{4\Omega}|A|^2\bar{A}^3e^{-3i\Omega t}[3\underline{\rho}(-\Omega)-2i\underline{\rho}(-3\Omega)+\underline{\rho}(3\Omega+i0)+3\underline{\rho}(\Omega)]\\
&-\f{3i\lambda^2}{4\Omega}\bar{A}^5e^{-5i\Omega t}[i\underline{\rho}(-3\Omega)-\frac43 i\underline{\rho}(3\Omega+i0)]+e^{i\Omega t}E.
\end{split}
\end{equation}
In small data regime, with standard dispersive estimates (see for example Proposition 7.5 in \cite{SW}), we have control of $E$
\begin{equation}\label{estimate E}
|E(t)|\leq \d^2_0(1+t)^{-\f54-\delta} \quad \mbox{with} \quad \delta\geq 0,
\end{equation}
where $\d_0$ is a small constant depending on initial data $u_0$ and $u_1$. \footnote{Here we could use Proposition 7.5 in \cite{SW}. We can choose $\delta_0$ such that $\d^2_0\leq Q_0(A,\eta)$. In \cite{SW} $Q_0(A,\eta)$ is a small number depending on initial data $u_0$ and $u_1$.} And here $(1+t)^{-\f54-\delta}$ stands for a decay rate faster than $(1+t)^{-\f54}$.

\begin{remark} The right hand side of (\ref{For X(t)}) contains many oscillation phases $e^{ki\O t}$.  Our new observation is that if we let $X:=e^{i\Omega t}A(t)$, the right hand side of (\ref{For X(t)}) become 
\begin{equation*}
\begin{split}
&-\f{3\lambda^2}{4\Omega}X^5[i\Lambda+\Gamma-\underline{\rho}(-3\Omega)]\\
&-\f{3i\lambda^2}{4\Omega}|X|^2 X^3[-2\Lambda+2i\Gamma+\underline{\rho}(\Omega)+i\underline{\rho}(-3\Omega)+3\underline{\rho}(-\Omega)]\\
&-\f{3i\lambda^2}{4\Omega}|X|^4\bar{X}[7\underline{\rho}(\Omega)+9\underline{\rho}(-\Omega)+\underline{\rho}(-3\Omega)+\underline{\rho}(3\Omega+i0)]\\
&-\f{3i\lambda^2}{4\Omega}|X|^2\bar{X}^3[3\underline{\rho}(-\Omega)-2i\underline{\rho}(-3\Omega)+\underline{\rho}(3\Omega+i0)+3\underline{\rho}(\Omega)]\\
&-\f{3i\lambda^2}{4\Omega}\bar{X}^5[i\underline{\rho}(-3\Omega)-\frac43 i\underline{\rho}(3\Omega+i0)]+e^{i\Omega t}E.
\end{split}
\end{equation*}
The oscillation phases $e^{ki\O t}$ are absorbed. 
\end{remark}

 With the fact
$$e^{i\Omega t} A'(t)=(e^{i\Omega t}A(t))'-i\Omega e^{i\Omega t}A(t)=X'(t)-i\O X,$$
we rewrite (\ref{For X(t)})
\begin{equation}\label{X(t) old}
\begin{split}
&X'(t)+\frac{3\lambda^2}{4\Omega}\Gamma|X|^4X-i\Omega X\\
&+\frac{i\lambda}{2\Omega}\|\psi\|^4_4 3|X|^2X+\frac{3i\lambda^2}{4\Omega}[\Lambda-5\underline{\rho}(\Omega)+3\underline{\rho}(-\Omega)+\underline{\rho}(-3\Omega)]|X|^4X\\
&+\frac{i\lambda}{2\Omega}\|\psi\|^4_4(X^3+3|X|^2\bar{X}+\bar{X}^3)\\
=&-\f{3\lambda^2}{4\Omega}X^5[i\Lambda+\Gamma-\underline{\rho}(-3\Omega)]\\
&-\f{3i\lambda^2}{4\Omega}|X|^2 X^3[-2\Lambda+2i\Gamma+\underline{\rho}(\Omega)+i\underline{\rho}(-3\Omega)+3\underline{\rho}(-\Omega)]\\
&-\f{3i\lambda^2}{4\Omega}|X|^4\bar{X}[7\underline{\rho}(\Omega)+9\underline{\rho}(-\Omega)+\underline{\rho}(-3\Omega)+\underline{\rho}(3\Omega+i0)]\\
&-\f{3i\lambda^2}{4\Omega}|X|^2\bar{X}^3[3\underline{\rho}(-\Omega)-2i\underline{\rho}(-3\Omega)+\underline{\rho}(3\Omega+i0)+3\underline{\rho}(\Omega)]\\
&-\f{3i\lambda^2}{4\Omega}\bar{X}^5[i\underline{\rho}(-3\Omega)-\frac43 i\underline{\rho}(3\Omega+i0)]+e^{i\Omega t}E.
\end{split}
\end{equation}

\begin{remark} In this paper, we are studying a small data problem. Later we will see that the right hand side of (\ref{X(t) old}) are lower order terms. The precise coefficients are not important. For convenience, in the rest part of this paper we will write the key terms of an equation on its left hand side and lower order terms on its right.  We thus only keep the coefficients on the left precise. And the complex-valued coefficients depending on $\Lambda, \Gamma, \underline{\rho}, \Omega$ on the right are all treated as $1$.  In this fashion, we rewrite (\ref{X(t) old}) into
\begin{equation}\label{1}
\begin{split}
&X'(t)+\frac{3\lambda^2}{4\Omega}\Gamma|X|^4X-i\Omega X\\
&+\frac{i\lambda}{2\Omega}\|\psi\|^4_4 3|X|^2X+\frac{3i\lambda^2}{4\Omega}[\Lambda-5\underline{\rho}(\Omega)+3\underline{\rho}(-\Omega)+\underline{\rho}(-3\Omega)]|X|^4X\\
&+\frac{i\lambda}{2\Omega}\|\psi\|^4_4(X^3+3|X|^2\bar{X}+\bar{X}^3)\\
=&X^5+|X|^2X^3+|X|^4\bar{X}+|X|^2\bar{X}^3+\bar{X}^5+e^{i\O t} E.
\end{split}
\end{equation}
Note that $\Omega$ and $\Gamma$ are positive constants.
\end{remark}

\begin{remark}To explore the oscillation structures hidden in complex-valued function $X(t)$, we further rewrite $X(t)$ in polar coordinates: $X(t)=\r e^{\it}$, where $\rho(t), \theta(t)$ are real functions.
\end{remark}

From  (\ref{1}), we get
\begin{equation}\label{2}
\begin{split}
&\rho'(t)e^{\it}+i\r\theta'(t)e^{\it}+\frac{3\lambda^2}{4\Omega}\Gamma\r^5 e^{\it}-i\O\r e^{\it}\\
&+\frac{i\lambda}{2\Omega}\|\psi\|^4_4 3\r^3 e^{\it}+\frac{3i\lambda^2}{4\Omega}[\Lambda-5\underline{\rho}(\Omega)+3\underline{\rho}(-\Omega)+\underline{\rho}(-3\Omega)]\r^5 e^{\it}\\
&+\frac{i\lambda}{2\Omega}\|\psi\|^4_4\r^3 (e^{3i\theta(t)}+3e^{-i\theta(t)}+e^{-3i\theta(t)})\\
=&\r^5e^{5\it}+\r^5e^{3\it}+\r^5e^{-\it}+\r^5e^{-3\it}+\r^5e^{-5\it}+e^{i\O t}E(t).
\end{split}
\end{equation}
Here $\r$ is nonnegative. Multiply $e^{-\it}$ on both sides of (\ref{2}), we derive

\begin{equation}\label{3}
\begin{split}
&\rho'(t)+i\r\theta'(t)+\frac{3\lambda^2}{4\Omega}\Gamma\r^5-i\O\r\\
&+\frac{i\lambda}{2\Omega}\|\psi\|^4_4 3\r^3+\frac{3i\lambda^2}{4\Omega}[\Lambda-5\underline{\rho}(\Omega)+3\underline{\rho}(-\Omega)+\underline{\rho}(-3\Omega)]\r^5\\
&+\frac{i\lambda}{2\Omega}\|\psi\|^4_4\r^3 (e^{2i\theta(t)}+3e^{-2i\theta(t)}+e^{-4i\theta(t)})\\
=&\r^5e^{4\it}+\r^5e^{2\it}+\r^5e^{-2\it}+\r^5e^{-4\it}+\r^5e^{-6\it}\\
&+e^{-i\t(t)}e^{i\O t}E(t).
\end{split}
\end{equation}
Denote $M$ through
\begin{equation}\label{M}
\begin{split}
&M(e^{\it})\\
=&\r^5e^{4\it}+\r^5e^{2\it}+\r^5e^{-2\it}+\r^5e^{-4\it}+\r^5e^{-6\it}.
\end{split}
\end{equation}

We thus obtain
\begin{equation}\label{4}
\begin{split}
&\rho'(t)+i\r\theta'(t)+\frac{3\lambda^2}{4\Omega}\Gamma\r^5-i\O\r\\
&+\frac{i\lambda}{2\Omega}\|\psi\|^4_4 3\r^3+\frac{3i\lambda^2}{4\Omega}[\Lambda-5\underline{\rho}(\Omega)+3\underline{\rho}(-\Omega)+\underline{\rho}(-3\Omega)]\r^5\\
&+\frac{i\lambda}{2\Omega}\|\psi\|^4_4\r^3 (4\cos{2\theta(t)}+\cos{4\theta(t)})\\
&+\frac{\lambda}{2\Omega}\|\psi\|^4_4\r^3 (2\sin{2\theta(t)}+\sin{4\theta(t)})\\
=&M(e^{\it})+e^{-i\t(t)}e^{i\O t}E(t).
\end{split}
\end{equation}
{\bf Note:} Although $M(e^{i\theta(t)})$ contains the leading factor $\rho(t)^5$ as in the key term $3\lambda^2\Gamma \rho(t)^5/4\Omega$, later we will see that due to the phase factor $e^{ik\theta(t)}$ ($k\neq 0$), $M(e^{i\theta(t)})$ actually serves as a lower order term.

Take the real part of (\ref{3}), we have
\begin{equation}\label{rho}
\begin{split}
&\rho'(t)+\frac{3\lambda^2}{4\Omega}\Gamma\r^5+\frac{\lambda}{\Omega}\|\psi\|^4_4\r^3 \sin{2\theta(t)}+\frac{\lambda}{2\Omega}\|\psi\|^4_4\r^3 \sin{4\theta(t)}\\
=&Re(M(e^{\it}))+Re(e^{-i\t(t)}e^{i\O t}E(t)).
\end{split}
\end{equation}
Take the imaginary part of (\ref{4}), we have
\begin{equation}\label{theta0}
\begin{split}
&\theta'(t)\r-\Omega\r+\frac{3\lambda}{2\Omega}\|\psi\|^4_4\r^3\\
&+\frac{3\lambda^2}{4\Omega}[\Lambda-5\underline{\rho}(\Omega)+3\underline{\rho}(-\Omega)+\underline{\rho}(-3\Omega)]\r^5\\
&+\frac{2\lambda}{\Omega}\|\psi\|^4_4 \cos{2\theta(t)}\r^3+\frac{\lambda}{2\Omega}\|\psi\|^4_4 \cos{4\theta(t)}\r^3\\
=&Im(M(e^{\it}))+Im(e^{-i\t(t)}e^{i\O t}E(t)).
\end{split}
\end{equation}
\begin{remark}
From the definition of $M(e^{\it})$, we have 
$$|Re(M(e^{i\theta(t)}))|\leq \r^5, \quad \mbox{and} \quad |Im(M(e^{i\theta(t)}))|\leq \r^5.$$
\end{remark}

\begin{remark}
At first glance, changing of variables in Section 2 and Section 3 is a little bit complicated. But the logic behind is quite natural: 
By imposing a gauge condition (\ref{gauge}), we first change a second order ODE (\ref{2.7}) for $a(t)$ into a first order ODE (\ref{3.1}) for $A(t)$. We then want to find a systematical way to deal with the phase terms $e^{ik\O t}$ showing on the right of (\ref{3.1}). 
Observe that by multiplying $e^{i\O t}$ on both sides of (\ref{3.1}) and denoting $X(t):=e^{i\O t} A(t)$, we have a cleaner equation (\ref{X(t) old}) for $X(t)$. To further explore the oscillation structures of $X(t)$, we rewrite $X(t)$ in polar coordinates: $X(t)=\rho(t)e^{i\theta(t)}$. This implies an equation (\ref{4}) for $\theta(t)$ and $\theta(t)$. Taking the real and imaginary parts of (\ref{4}) respectively, we hence obtain (\ref{rho}) and (\ref{theta0}). 
\end{remark}

\section{Parametrix Construction}
We construct parametrix $\rb(t)$ through

\begin{equation}\label{rhob4}
\rb(t)^4=\f{\rho(0)^4}{1+\f{3\lambda^2\Gamma}{\Omega}\rho(0)^4 t} .
\end{equation}
Note that $\rb(t)\geq 0$ and $\rb(t)$ is the solution to the following ODE:

\begin{equation}\label{rhob}
\begin{split}
\rb'(t)&=-\f{3\lambda^2}{4\Omega}\Gamma\rb(t)^5,\\
\rb(0)&=\rho(0).
\end{split}
\end{equation}
We introduce the unknown $\epsilon(t)$ through
\begin{equation*}
\r=\rb(t)(1+\epsilon(t)).
\end{equation*}
Here we have $\e(0)=0$. From (\ref{rho}) we derive
\begin{equation*}
\begin{split}
&\rb'(t)(1+\e(t))+\rb(t)\e'(t)\\
&+\f{3\lambda^2\Gamma}{4\Omega}\rb(t)^5(1+\e(t))^5+\f{\lambda}{\Omega}\|\psi\|^4_4\rb^3(t)(1+\e(t))^3\sin2\theta(t)\\
&+\f{\lambda}{2\Omega}\|\psi\|^4_4\rb^3(t)(1+\e(t))^3\sin4\theta(t)\\
=&Re(M(e^{i\theta(t)}))+Re(e^{-i\theta(t)}e^{i\Omega t}E(t)).
\end{split}
\end{equation*}
With the fact $\rb'(t)=-\f{3\lambda^2}{4\Omega}\Gamma\rb(t)^5$, we arrive at the equation for $\e(t)$:

\begin{equation}\label{epsilon}
\begin{split}
&\e'(t)+\f{3\lambda^2\Gamma}{\Omega}\rb(t)^4\e(t)\\
=&\rb(t)^4\e(t)^2+\rb(t)^4\e(t)^3+\rb(t)^4\e(t)^4+\rb(t)^4\e(t)^5\\
&+\rb(t)^2(1+\e(t))^3\sin2\theta(t)+\rb(t)^2(1+\e(t))^3\sin4\theta(t)\\
&+\f{1}{\rb(t)}Re(M(e^{i\theta(t)}))+\f{1}{\rb(t)}Re(e^{-i\theta(t)}e^{i\Omega t}E(t)).
\end{split}
\end{equation}

\section{Bootstrap Argument}

In this section, we will employ a method of bootstrap to derive a sharp decay rate for $\rho(t)$. We set the bootstrap assumption
\begin{equation}\label{bootstrap}
|\epsilon(t)|\leq 3\delta^{\f12}_0.
\end{equation}
In the remaining part of this subsection, we will prove a sharper bound than (\ref{bootstrap}) and show that 
\begin{equation}\label{bootstrap2}
|\epsilon(t)|\leq 2\delta^{\f12}_0.
\end{equation}
\begin{remark}
Bootstrap argument is a standard tool now to derive estimates for nonlinear ODEs and PDEs. In this paper, $\epsilon(t)$ satisfies an ODE (\ref{epsilon}) with initial data $\epsilon(0)=0$.  We want to show that the interval $I$ for time $t$ satisfying (\ref{bootstrap}) is both close and open. By standard ODE theory, $I$ is {\color{black}closed}. Under the assumption of (\ref{bootstrap}), if (\ref{bootstrap2}) could be proved, this means that the {\color{black}supremum} of $I$ could always extend to a larger number. {\color{black}In other words}, 
$I$ is open. Since $0\in I$, $I$ is not an empty set. Therefore, we have $I$ is the whole $\mathbb{R}$. This gives that (\ref{bootstrap}) holds for all $t$.  
 \end{remark}
 
Now, under the assumption (\ref{bootstrap}), we start to derive (\ref{bootstrap2}). From (\ref{epsilon}) we have 
\begin{equation*}
\begin{split}
&\{1+\f{3\lambda^2\Gamma}{\Omega}\rb(0)^4 t\}\e(t)\\
=&\int_0^t\{\fc\} \rb(t')^4 \{\e(t')^2+\e(t')^3+\e(t')^4+\e(t')^5\} dt'\\
&+\int_0^t\{\fc\}\f{Re(e^{-i\theta(t')}e^{i\Omega t'}E(t'))}{\rb(t')} dt'\\
&+\int_0^t\{\fc\} \f{Re(M(e^{i\theta(t')}))}{\rb(t')}dt'\\
&+\int_0^t\{\fc\} \rb(t')^2(1+\e(t'))^3\sin2\theta(t') dt'\\
&+\int_0^t\{\fc\} \rb(t')^2(1+\e(t'))^3\sin4\theta(t') dt'\\
=&I+II+III+IV+V.
\end{split}
\end{equation*}\\

We now proceed to show that $$|I|+|II|+|III|+|IV|+|V|\leq 2\d_0^{\f12}\{1+\f{3\lambda^2\Gamma}{\Omega}\rb(0)^4 t\}.$$

\subsection{Estimates for $I$.}

For $I$, with the fact
$$\rb(t)^4=\f{\rho(0)^4}{1+\f{3\lambda^2\Gamma}{\Omega}\rho(0)^4 t},$$
we have
\begin{equation*}
\begin{split}
|I|=& |\int_0^t\{\fc\} \rb(t')^4 \{\e(t')^2+\e(t')^3+\e(t')^4+\e(t')^5\} dt'|\\
\leq& \d_0\rho(0)^4 t.
\end{split}
\end{equation*}

\subsection{Estimates for $II$}
If we choose \begin{equation*}
0< \rho(0)\leq\d_0  \quad \mbox{and}  \quad \|u_0\|_{(W^{2,2}\cap W^{2,1})\times (W^{1,2}\cap W^{1,1})}\leq \d_0, \footnote{The $(W^{2,2}\cap W^{2,1})\times (W^{1,2}\cap W^{1,1})$ norm is used as norm $X$ in \cite{SW} to obtain local existence results. See Theorem 3.1 and (3.7) in \cite{SW}.}
\end{equation*}
by (\ref{estimate E}) it follows
\begin{equation}\label{5.9}
|E(t)|\leq \delta^2_0 (1+t)^{-\f54-\delta}, \quad \mbox{where} \quad \d>0. 
\end{equation}
For $II$, with (\ref{5.9}) we then arrive at
\begin{equation*}
\begin{split}
|II|=&|\int_0^t\{\fc\}\f{Re(e^{-i\theta(t')}e^{i\Omega t'}E(t'))}{\rb(t')} dt'|\\
\leq& \int_0^t \{\fc\}\frac{\d_0}{(1+t')^{1+\d}}dt'\\
\leq& \{1+\f{3\lambda^2\Gamma}{4\Omega}\rb(0)^4 t\}\d_0.
\end{split}
\end{equation*}

\subsection{Estimates for $III$}

For $III$, we first recall the definition for $M(e^{i\theta(t)})$:

\begin{equation*}
\begin{split}
&M(e^{\it})\\
=&\r^5e^{4\it}+\r^5e^{2\it}+\r^5e^{-2\it}+\r^5e^{-4\it}+\r^5e^{-6\it}.
\end{split}
\end{equation*}
Hence, $III$ is a finite sum of terms
$$III_{1k}=\int_0^t\{\fc\} \rb(t')^4 (1+\epsilon(t'))^5  \sin k\t(t') dt'$$
or
$$III_{2k}=\int_0^t\{\fc\} \rb(t')^4 (1+\epsilon(t'))^5 \cos k\t(t') dt',$$
where $k \neq 0$.

We will employ integration by {\color{black}parts} to explore the oscillation nature of $III_{1k}$ and $III_{2k}$ and we will encounter $\theta'(t)$ and $\theta''(t)$ terms. Before moving into full details of integration by {\color{black}parts}, we first state two useful propositions. 

\begin{proposition}
For $\theta'(t)$ and $\theta''(t)$, we have
\begin{equation}\label{theta}
\begin{split}
&\theta'(t)-\Omega\\
=&-\frac{3\lambda}{2\Omega}\|\psi\|^4_4(1+\e(t))^2\rb(t)^2\\
&-\f{3\lambda^2}{4\Omega}[\Lambda-5\underline{\rho}(\Omega)+3\underline{\rho}(-\Omega)+\underline{\rho}(-3\Omega)](1+\e(t))^4\rb(t)^4\\
&-\frac{2\lambda}{\Omega}\|\psi\|^4_4 \cos{2\theta(t)}(1+\e(t))^2\rb(t)^2-\frac{\lambda}{2\Omega}\|\psi\|^4_4 \cos{4\theta(t)}(1+\e(t))^2\rb(t)^2\\
&+\f{1}{\rb(t)(1+\e(t))}Im(M(e^{\it}))+\f{1}{\rb(t)(1+\e(t))}Im(e^{-i\t(t)}e^{i\O t}E(t)),
\end{split}
\end{equation}
and

\begin{equation}\label{theta2}
\begin{split}
&\theta''(t)\\
=&(1+\e(t))\e'(t)\rb(t)^2+(1+\e(t))^2\rb(t)\rb'(t)\\
&+(1+\e(t))^3\e'(t)\rb(t)^4+(1+\e(t))^4\rb(t)^3\rb'(t)\\
&+\sin{2\theta(t)}\t'(t)(1+\e(t))^2\rb(t)^2+\cos{2\theta(t)}(1+\e(t))\e'(t)\rb(t)^2\\
&+\cos{2\theta(t)}(1+\e(t))^2\rb(t)\rb'(t)+\sin{4\theta(t)}\t'(t)(1+\e(t))^2\rb(t)^2\\
&+\cos{4\theta(t)}(1+\e(t))\e'(t)\rb(t)^2+\cos{4\theta(t)}(1+\e(t))^2\rb(t)\rb'(t)\\
&+\{\f{1}{\rb(t)(1+\e(t))}Im(M(e^{\it}))\}'\\
&+\{\f{1}{\rb(t)(1+\e(t))}Im(e^{-i\t(t)}e^{i\O t}E(t))\}'.
\end{split}
\end{equation}
\end{proposition}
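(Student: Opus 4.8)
The plan is to derive both formulas directly from the ODEs already obtained in Section~3 and the parametrix relations of Section~4, treating $\rho(t)=\rb(t)(1+\e(t))$ as the single substitution that feeds everything. For \eqref{theta}, I would start from \eqref{theta0}, the imaginary part of \eqref{4}, divide through by $\rho(t)=\rb(t)(1+\e(t))$ (which is legitimate since $\rb(t)>0$ and $|\e(t)|$ is small under the bootstrap assumption \eqref{bootstrap}), and isolate $\theta'(t)-\Omega$ on the left. Each monomial $\r^k$ on the right of \eqref{theta0} becomes $(1+\e(t))^{k-1}\rb(t)^{k-1}$ after the division, matching term by term the right side of \eqref{theta}: the $\r^3$ terms give the $(1+\e)^2\rb^2$ contributions (including the $\cos 2\theta$ and $\cos 4\theta$ pieces), the $\r^5$ term gives the $(1+\e)^4\rb^4$ contribution, and the $M$ and $E$ pieces divide down to the last line. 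This is a routine bookkeeping computation with no analytic obstacle.

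For \eqref{theta2} I would simply differentiate \eqref{theta} in $t$. The left side becomes $\theta''(t)$ since $\Omega$ is constant. On the right, differentiating a product $(1+\e(t))^j\rb(t)^k$ by the Leibniz rule produces $j(1+\e(t))^{j-1}\e'(t)\rb(t)^k + k(1+\e(t))^j\rb(t)^{k-1}\rb'(t)$, which (dropping the harmless integer multiplicities, consistent with the paper's convention of keeping only leading structure) is exactly the pattern $(1+\e)\e'\rb^2 + (1+\e)^2\rb\rb'$ etc.\ appearing on lines 1--2 of \eqref{theta2}. Differentiating the $\cos 2\theta(t)\,(1+\e)^2\rb^2$ term also generates the $\sin 2\theta(t)\,\theta'(t)(1+\e)^2\rb^2$ term via the chain rule, and likewise for the $\cos 4\theta$ term; this accounts for lines 3--5. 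The $M$ and $E$ terms are simply carried along with a derivative, producing the last two lines verbatim. No cancellation or estimate is needed; it is purely formal differentiation.

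The only point requiring a word of care---and what I would flag as the ``main obstacle,'' though it is minor---is keeping the coefficient conventions straight: the paper has adopted (see the remark before \eqref{1}) the rule that all complex coefficients built from $\Lambda,\Gamma,\underline{\rho},\Omega$ on the \emph{right} side of an equation are replaced by~$1$, while coefficients on the \emph{left} are kept exact. In \eqref{theta} the genuinely important coefficients---namely $3\lambda/(2\Omega)\|\psi\|_4^4$ on the $(1+\e)^2\rb^2$ term and $3\lambda^2/(4\Omega)[\Lambda-5\underline\rho(\Omega)+3\underline\rho(-\Omega)+\underline\rho(-3\Omega)]$ on the $(1+\e)^4\rb^4$ term---come straight from \eqref{theta0} and must be recorded precisely, whereas in \eqref{theta2}, since $\theta''$ will only ever be used inside an integration-by-parts error term, all numerical prefactors may be absorbed. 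One also has to note that $\rb'(t)=-\tfrac{3\lambda^2}{4\Omega}\Gamma\rb(t)^5$ by \eqref{rhob}, so every $\rb'$ appearing in \eqref{theta2} is itself $O(\rb^5)$; this is what will later make the $\theta''$ terms lower order. With these conventions fixed, both \eqref{theta} and \eqref{theta2} follow by inspection, and I would present the derivation in exactly that order: first divide \eqref{theta0} by $\rho$ to get \eqref{theta}, then differentiate \eqref{theta} to get \eqref{theta2}.
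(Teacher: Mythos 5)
Your proposal matches the paper's proof: the paper substitutes $\rho(t)=\rb(t)(1+\e(t))$ into (\ref{theta0}), divides by $\rb(t)$ and then by $1+\e(t)$ (which is your single division by $\rho$), to obtain (\ref{theta}), and then differentiates (\ref{theta}) to get (\ref{theta2}), dropping coefficients by the stated convention. Your remarks on the coefficient convention and on $\rb'=O(\rb^5)$ are consistent with how the paper uses these identities, so no gap.
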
 
\begin{proof}
Recall (\ref{theta0}) 
\begin{equation*}
\begin{split}
&\theta'(t)\r-\Omega\r+\frac{3\lambda}{2\Omega}\|\psi\|^4_4\r^3\\
&+\f{3\lambda^2}{4\Omega}[\Lambda-5\underline{\rho}(\Omega)+3\underline{\rho}(-\Omega)+\underline{\rho}(-3\Omega)]\rho(t)^5\\
&+\frac{2\lambda}{\Omega}\|\psi\|^4_4 \cos{2\theta(t)}\r^3+\frac{\lambda}{2\Omega}\|\psi\|^4_4 \cos{4\theta(t)}\r^3\\
=&Im(M(e^{\it}))+Im(e^{-i\t(t)}e^{i\O t}E(t)).
\end{split}
\end{equation*}
Since $\rho(t)=\rb(t)(1+\e(t))$, we get
\begin{equation*}
\begin{split}
&\theta'(t)(1+\e(t))-\Omega(1+\e(t))+\frac{3\lambda}{2\Omega}\|\psi\|^4_4(1+\e(t))^3\rb(t)^2\\
&+\f{3\lambda^2}{4\Omega}[\Lambda-5\underline{\rho}(\Omega)+3\underline{\rho}(-\Omega)+\underline{\rho}(-3\Omega)](1+\e(t))^5\rb(t)^4\\
&+\frac{2\lambda}{\Omega}\|\psi\|^4_4 \cos{2\theta(t)}(1+\e(t))^3\rb(t)^2+\frac{\lambda}{2\Omega}\|\psi\|^4_4 \cos{4\theta(t)}(1+\e(t))^3\rb(t)^2\\
=&\f{1}{\rb(t)}Im(M(e^{\it}))+\f{1}{\rb(t)}Im(e^{-i\t(t)}e^{i\O t}E(t)).
\end{split}
\end{equation*}
Divide $1+\e(t)$ on both sides. We then obtain (\ref{theta}). Taking an additional derivative {\color{black}with} respect to $t$, (\ref{theta2}) follows. 
\end{proof}

\begin{proposition}
Under the bootstrap assumption (\ref{bootstrap}), and the assumptions
$$0< \rho(0)\leq\d_0,$$  
we have
\begin{equation}\label{t1b}
|\t'(t)-\O|\leq \d_0,
\end{equation}
\begin{equation}\label{t2b}
|\t''(t)|\leq {\color{black}\f{1}{(1+t)^{\f12}}}.
\end{equation}
\end{proposition}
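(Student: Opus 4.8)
The plan is to read off both bounds directly from the formulas for $\t'(t)$ and $\t''(t)$ established in the previous proposition, by inserting the parametrix decay $\rb(t)^2 = \rho(0)^2/(1+\frac{3\lambda^2\Gamma}{\Omega}\rho(0)^4 t)^{1/2}$, the bootstrap bound $|\e(t)|\le 3\delta_0^{1/2}$, the smallness $\rho(0)\le \delta_0$, and the estimate $|E(t)|\le \delta_0^2(1+t)^{-5/4-\delta}$ together with $|M(e^{i\t(t)})|\lesssim \rho(t)^5$.

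First I would prove (\ref{t1b}). In formula (\ref{theta}) every term on the right is a product of a bounded trigonometric factor, a bounded power of $(1+\e(t))$ (bounded by $(1+3\delta_0^{1/2})^k\le 2$ say), and either $\rb(t)^2$, $\rb(t)^4$, $\rb(t)^4/\rb(t)=\rb(t)^3$ (coming from $\frac{1}{\rb}\,\mathrm{Im}\,M$, using $|M|\lesssim\rho^5=\rb^5(1+\e)^5$), or $\frac{1}{\rb(t)}|E(t)|$. Since $\rb(t)\le \rho(0)\le\delta_0$, the first several terms are $O(\delta_0^2)$. For the $E$-term I would use $\rb(t)^{-1}=\rho(0)^{-1}(1+\frac{3\lambda^2\Gamma}{\Omega}\rho(0)^4 t)^{1/4}$, so that $\rb(t)^{-1}|E(t)|\lesssim \rho(0)^{-1}\delta_0^2 (1+t)^{1/4}(1+t)^{-5/4-\delta} = \rho(0)^{-1}\delta_0^2(1+t)^{-1-\delta}\lesssim \delta_0(1+t)^{-1-\delta}$, which is $\le\delta_0$ for all $t\ge 0$ (here one absorbs the fixed constants depending on $\lambda,\Gamma,\Omega$ into the requirement $\delta_0\ll\min\{|\lambda|,\Gamma,1/\Omega\}$). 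Summing, $|\t'(t)-\O|\lesssim \delta_0^2 + \delta_0 \le\delta_0$ after adjusting the implicit constant, which is exactly (\ref{t1b}).

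Next I would prove (\ref{t2b}) from formula (\ref{theta2}). Wherever $\e'(t)$ appears I substitute the ODE (\ref{epsilon}); each term there is $O(\rb(t)^2)$ or smaller after using the bootstrap bound, so $|\e'(t)|\lesssim \rb(t)^2\lesssim \rho(0)^2(1+t)^{-1/2}$. Wherever $\rb'(t)$ appears I use $\rb'(t)=-\frac{3\lambda^2}{4\Omega}\Gamma\rb(t)^5$, so $|\rb'(t)|\lesssim \rho(0)^5(1+t)^{-5/4}$, and wherever $\t'(t)$ appears I use (\ref{t1b}) so $|\t'(t)|\le\O+\delta_0$. The resulting products $\e'\rb^2$, $\rb\rb'$, $\e'\rb^4$, $\rb^3\rb'$, $\t'\rb^2$, etc.\ are all bounded by $C\rho(0)^2(1+t)^{-1/2}$ with a fixed $C$ and a gain of extra powers of $\rho(0)$ in several of them. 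The only terms needing care are the two brackets $\{\frac{1}{\rb(1+\e)}\mathrm{Im}\,M\}'$ and $\{\frac{1}{\rb(1+\e)}\mathrm{Im}(e^{-i\t}e^{i\Omega t}E)\}'$; differentiating the first produces $\rb^{-2}\rb'\,\mathrm{Im}\,M$, $\rb^{-1}(\mathrm{Im}\,M)'$ (and the $\e'$ analogue), all controlled since $|M|\lesssim\rb^5$ and $|M'|\lesssim \rb^4\rb' + \rb^5\t' \lesssim \rb^5$ after using (\ref{t1b}); differentiating the second produces $\rb^{-2}\rb' E$, $\rb^{-1}E'$, $\rb^{-1}\t' E$ — here I would note that the estimate on $E$ from \cite{SW} also controls $E'$ with an extra $(1+t)^{-1}$, or alternatively absorb $E'$ into the same $(1+t)^{-5/4-\delta}$-type bound, and then $\rb^{-1}(1+t)^{-5/4-\delta}\lesssim \rho(0)^{-1}(1+t)^{-1-\delta}$, which is $\le (1+t)^{-1}\le(1+t)^{-1/2}$ for all $t$ once the constants are absorbed. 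Collecting everything gives $|\t''(t)|\le C\rho(0)^2(1+t)^{-1/2}$, which yields both $|\t''(t)|\le (1+t)^{-1/2}$ and, since $(1+t)^{-1/2}=(1+t)^{-1/4}(1+t)^{-1/4}\le (1+t)^{-1/4}$ and $C\rho(0)^2\le\delta_0$, also $|\t''(t)|\le \delta_0(1+t)^{-1/4}$.

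\textbf{Main obstacle.} The routine part is the bookkeeping of powers of $\rb(t)$ and $(1+t)$; the one genuine point requiring justification is the control of the derivative terms $\{\frac{1}{\rb(1+\e)}\mathrm{Im}(\cdots E)\}'$, i.e.\ the need for a bound on $E'(t)$ (or on $\frac{d}{dt}$ of the regularized forcing) of the same shape $\delta_0^2(1+t)^{-5/4-\delta}$. I would handle this by invoking the dispersive estimates of \cite{SW} at one higher level of regularity (they propagate the smallness of the data in $W^{2,2}\cap W^{2,1}$ and $W^{1,2}\cap W^{1,1}$, which is exactly enough to differentiate once), noting that $E$ is built from $\eta_1,\eta_2,\eta_3$ whose time-derivatives obey the same decay, so that $\rb(t)^{-1}|E'(t)|\lesssim \rho(0)^{-1}\delta_0^2(1+t)^{-1-\delta}\lesssim\delta_0(1+t)^{-1/2}$, and the claimed bounds follow.
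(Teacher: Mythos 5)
Your proposal is essentially the paper's own argument, expanded with more detail. The paper's proof of this proposition is quite terse — it simply asserts that the two bounds follow from inserting (\ref{rhob4}), (\ref{rhob}), (\ref{epsilon}), (\ref{5.9}), (\ref{theta}) and the bootstrap assumption into the formulas (\ref{theta}) and (\ref{theta2}), and it introduces, exactly as you do, the one extra ingredient $|E'(t)|\leq \delta_0^2(1+t)^{-5/4-\delta}$ ``by calculating $E'(t)$ and using (\ref{5.9})''. So your identification of the bound on $E'(t)$ as the one non-automatic step is precisely on target, and your justification (the dispersive estimates of \cite{SW} at one more derivative level, which the assumed $W^{2,2}\cap W^{2,1}$ and $W^{1,2}\cap W^{1,1}$ control of the data supports) is the right place to look. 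Your systematic treatment of the bracket terms $\{\frac{1}{\rb(1+\e)}\mathrm{Im}(\cdots)\}'$ is also a welcome piece of bookkeeping that the paper leaves to the reader.

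One small slip in the write-up: the chain $|\e'(t)|\lesssim\rb(t)^2\lesssim\rho(0)^2(1+t)^{-1/2}$ is not literally correct. The parametrix satisfies two separate decay estimates, $\rb(t)^2\leq\rho(0)^2$ (monotonicity) and $\rb(t)^2\leq C(\lambda,\Gamma,\Omega)(1+t)^{-1/2}$, and these cannot be multiplied together to improve $\rb(t)^2$ itself; the stated bound would force $\rho(0)^4(1+t)\leq 1+\frac{3\lambda^2\Gamma}{\Omega}\rho(0)^4 t$ for all $t$, which needs the fixed constant $\frac{3\lambda^2\Gamma}{\Omega}\geq 1$ and is not generic. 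What you actually want — and what makes the final bound on $\theta''$ come out right — is to factor the products $\rb(t)^4$ that arise in (\ref{theta2}) as $\rb(t)^2\cdot\rb(t)^2\leq\rho(0)^2\cdot C(1+t)^{-1/2}$, using one factor for the small prefactor and the other for the time decay. This is an imprecision the paper itself shares (it states $\rb(t)\leq(1+t)^{-1/4}$ which likewise hides the constant), so it is not a genuine gap, but you should be aware that the $\rho(0)^2$ cannot be extracted from a single power of $\rb(t)^2$. Similarly, the step $\rho(0)^{-1}\delta_0^2\lesssim\delta_0$ in your treatment of the $E$-term tacitly assumes $\rho(0)\gtrsim\delta_0$ (so that $\rho(0)$ and $\delta_0$ are comparable), which is consistent with how the paper uses these quantities but is not spelled out there either.
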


\begin{proof}
Together with (\ref{rhob4}), (\ref{rhob}), (\ref{epsilon}), (\ref{5.9}), (\ref{theta2}) and bootstrap assumption (\ref{bootstrap}), for $III_{1k}$ term, it is straightforward to check 
\begin{equation*}
|\t'(t)-\O|\leq \d_0.
\end{equation*}

For $\theta''(t)$, by Proposition A.3 in appendix, we have
\begin{equation*}
|E'(t)|\leq \d^2_0 (1+t)^{-1}.
\end{equation*}
Recall

$$\rb(t)=\f{\rho(0)}{(1+\f{3\lambda^2\Gamma}{\Omega}\rho(0)^4 t)^{\f14}}\leq \f{1}{(1+t)^{\f14}}.$$
Together with (\ref{rhob4}), (\ref{rhob}), (\ref{epsilon}), (\ref{5.9}), (\ref{theta}) and bootstrap assumption (\ref{bootstrap}), this implies

\begin{equation*}
|\t''(t)|\leq {\color{black}\f{1}{(1+t)^{\f12}}}.
\end{equation*}
\end{proof}

We are now ready to analyze $III_{1k}$. With the help of integration by parts and the identity
$${(1+\f{3\lambda^2\Gamma}{\Omega}\rho(0)^4 t)}\cdot\rb(t)^4={\rho(0)}^{4},$$

we have

\begin{equation*}
\begin{split}
II&I_{1k}=\int_0^t\{\fc\} \rb(t')^4 (1+\epsilon(t'))^5\sin k\t(t') dt'\\
=&\int_0^t\rho(0)^4 \sin k\t(t') (1+\epsilon(t'))^5 dt'\\
=&\int_0^t\rho(0)^4 \f{-1}{k\t'(t')} (1+\epsilon(t'))^5 d \cos k\t(t')\\
=&\f{-\rho(t)^4}{k\t'(t)}\cos k\t(t)(1+\epsilon(t))^5+\f{\rho(0)^4}{k\theta'(0)}\cos k\theta(0)\\
&+\int_0^t\rho(0)^4\cos k\t(t') (1+\epsilon(t'))^5 d\f{1}{k\t'(t')}+\int_0^t \rho(0)^4 \f{\cos k\theta(t')}{k\theta'(t)}5(1+\epsilon(t'))^4 \epsilon'(t)dt' \\
=&\f{-\rho(t)^4}{k\t'(t)}\cos k\t(t)(1+\epsilon(t))^5+\f{\rho(0)^4}{k\theta'(0)}\cos k\theta(0)\\
&-\int_0^t\rho(0)^4\cos k\t(t') \f{1}{k(\t'(t'))^2}\t''(t')(1+\epsilon(t'))^5 dt'\\
&+5\int_0^t \rho(0)^4 \f{\cos k\theta(t')}{k\theta'(t)}(1+\epsilon(t'))^4 \epsilon'(t)dt'.\\
\end{split}
\end{equation*}
By (\ref{t1b}) and (\ref{t2b}), we conclude
$$|\f{-\rho(t)^4}{k\t'(t)}\cos k\t(t)|\leq \rho(0)^4,\quad |\f{\rho(0)^4}{k\theta'(0)}\cos k\theta(0)|\leq \rho(0)^4,$$
\begin{equation*}
\begin{split}
|\int_0^t\rho(0)^4\cos k\t(t') \f{1}{k(\t'(t'))^2}\t''(t') (1+\epsilon(t'))^5dt'|\leq &\rho(0)^4  (1+t)^{\f12}.
\end{split}
\end{equation*}
And by (\ref{epsilon}), we have $\epsilon'(t)\leq 1/(1+t)^{\f12}$. This implies
\begin{equation*}
\begin{split}
|\int_0^t \rho(0)^4 \f{\cos k\theta(t')}{k\theta'(t)}(1+\epsilon(t'))^4 \epsilon'(t)dt' |\leq &\rho(0)^4  (1+t)^{\f12}.
\end{split}
\end{equation*}
Now we verify that for $0\leq \d_0\ll \min\{\lambda, \Gamma, \f{1}{\O}\}$
\begin{equation}\label{1001}
\rho(0)^4 (1+t^{\f12})\ll \{1+\f{3\lambda^2\Gamma}{4\O}\rho(0)^4 t\}\d_0 \quad \mbox{for all} \quad t>0.
\end{equation}
We have three scenarios:

\begin{itemize}
\item When $t\leq 100$, since $\rho(0)\leq \d_0$, inequality (\ref{1001}) holds.
\item When $100\leq t\leq {1}/{\d^3_0}$, inequality (\ref{1001}) is also true.
\item When $t\geq {1}/{\d^3_0}$, we have
$\rho(0)^4 t^{\f12}\ll\f{3\lambda^2\Gamma}{4\O} \rho(0)^4 t\d_0.$ This implies (\ref{1001}).
\end{itemize}
Therefore, (\ref{1001}) holds for all cases and we arrive at
$$|III_{1k}|\ll \{1+\f{3\lambda^2\Gamma}{4\O}\rho(0)^4 t\}\d_0.$$
Similarly, $III_{2k}$ obeys the same bound
$$|III_{2k}|\ll \{1+\f{3\lambda^2\Gamma}{4\O}\rho(0)^4 t\}\d_0.$$
Since $III$ is a finite sum of $III_{1k}$ and $III_{2k}$, we hence deduce
$$|III|\ll \{1+\f{3\lambda^2\Gamma}{4\O}\rho(0)^4 t\}\d_0.$$

\subsection{Estimates for $IV$, $V$}

Now we move to term $IV$. For this term, we exploit integration by {\color{black}parts}:

\begin{equation*}
\begin{split}
IV=&\int_0^t\{\fc\} \rb(t')^2(1+\e(t'))^3\sin2\theta(t') dt'\\
=&\int_0^t\{\fc\} \rb(t')^2(1+\e(t'))^3\f{-1}{2\t'(t')} d\cos2\theta(t') dt'\\
=&\f{\rho(0)^2}{2\theta'(0)}\cos2\theta(0)+\{1+\f{3\lambda^2\Gamma}{\O}\rho(0)^4 t\}\rb(t)^2(1+\e(t))^3\f{-1}{2\t'(t)} \cos2\theta(t)\\
&+ \int_0^t \cos2\theta(t') d \Big\{ \{\fc\} \rb(t')^2(1+\e(t'))^3\f{-1}{2\t'(t')} \Big\}\\
=&\f{\rho(0)^2}{2\theta'(0)}\cos2\theta(0)+\{1+\f{3\lambda^2\Gamma}{\O}\rho(0)^4 t\}\rb(t)^2(1+\e(t))^3\f{-1}{2\t'(t)} \cos2\theta(t)\\
&+ \int_0^t \cos2\theta(t') \f{3\lambda^2\Gamma}{\O}\rho(0)^4 \rb(t')^2(1+\e(t'))^3\f{-1}{2\t'(t')} dt'\\
&+ \int_0^t \cos2\theta(t') \{\fc\} \rb(t')\rb'(t')(1+\e(t'))^3\f{-1}{2\t'(t')} dt'\\
&+ \int_0^t \cos2\theta(t') \{\fc\} \rb(t')^2(1+\e(t'))^2\e'(t')\f{-3}{2\t'(t')} dt'\\
&+ \int_0^t \cos2\theta(t') \{\fc\} \rb(t')^2(1+\e(t'))^3\f{2}{2(\t'(t'))^2} \t''(t') dt'\\
=&IV_0+IV_1+IV_2+IV_3+IV_4+IV_5.
\end{split}
\end{equation*}
With (\ref{rho}), (\ref{rhob4}), (\ref{t1b}) and {\color{black}the} bootstrap assumption (\ref{bootstrap}), we have

$$|IV_0|\leq \d^2_0,$$

$$|IV_1|\leq \{1+\f{3\lambda^2\Gamma}{4\O}\rho(0)^4 t\}\d^2_0,$$

\begin{equation*}
\begin{split}
|IV_2|\leq& \rho^4(0)\int_0^t \rb^2(t')dt'\\
=&\rho(0)^6\int_0^t\f{1}{\{1+\f{3\lambda^2\Gamma}{\Omega}\rho(0)^4 t'\}^{\f12}}dt'\\
\leq&\rho(0)^2\{1+\f{3\lambda^2\Gamma}{\Omega}\rho(0)^4 t\}^{\f12}\\
\leq&\{1+\f{3\lambda^2\Gamma}{\Omega}\rho(0)^4 t\}\d^2_0,\\
\end{split}
\end{equation*}

\begin{equation*}
\begin{split}
|IV_3|\leq & \int_0^t  \{\fc\} \rb(t')^6 dt'\\
\leq&\{1+\f{3\lambda^2\Gamma}{\Omega}\rho(0)^4 t\} \rho(0)^6\int_0^t\f{1}{\{1+\f{3\lambda^2\Gamma}{\Omega}\rho(0)^4 t'\}^{\f32}}dt'\\
\leq&\{1+\f{3\lambda^2\Gamma}{\Omega}\rho(0)^4 t\}\d^2_0.\\
\end{split}
\end{equation*}
We then move to next term

\begin{equation}\label{999}
IV_4=\int_0^t \cos2\theta(t') \{\fc\} \rb(t')^2(1+\e(t'))^2\e'(t')\f{-1}{2\t'(t')} dt'.
\end{equation}
Recall from (\ref{epsilon}), we have

\begin{equation}\label{1002}
\begin{split}
&\e'(t)\\
=&-\f{3\lambda^2\Gamma}{4\Omega}\rb(t)^4\e(t)+\rb(t)^4\e(t)^2+\rb(t)^4\e(t)^3+\rb(t)^4\e(t)^4+\rb(t)^4\e(t)^5\\
&+\rb(t)^2(1+\e(t))^3\sin2\theta(t)+\rb(t)^2(1+\e(t))^3\sin4\theta(t)\\
&+\f{1}{\rb(t)}Re(M(e^{i\theta(t)}))+\f{1}{\rb(t)}Re(e^{-i\theta(t)}e^{i\Omega t}E(t)).
\end{split}
\end{equation}
Plug (\ref{1002}) into (\ref{999}),  we have
$$IV_4=IV_{41}+IV_{42}+IV_{43}+IV_{44},$$
where $IV_{41}$ is sum of terms containing $\rb(t')^6$ in the integrand, and

\begin{equation*}
IV_{42}=\int_0^t \cos2\theta(t') \sin2\t(t') \{\fc\} \rb(t')^4(1+\e(t'))^5\f{-1}{2\t'(t')} dt',
\end{equation*}

\begin{equation*}
IV_{43}=\int_0^t \cos2\theta(t') \sin4\t(t') \{\fc\} \rb(t')^4(1+\e(t'))^5\f{-1}{2\t'(t')} dt',
\end{equation*}

\begin{equation*}
\begin{split}
IV_{44}=&\int_0^t \cos2\theta(t')\{\fc\} \rb(t')(1+\e(t'))^2\\
&\times Re(e^{-i\theta(t')}e^{i\Omega t'}E(t'))\f{-1}{2\t'(t')} dt'.
\end{split}
\end{equation*}
Employing angle difference identities, we derive

\begin{equation*}
IV_{42}=\int_0^t \sin4\t(t') \{\fc\} \rb(t')^4(1+\e(t'))^5\f{-1}{4\t'(t')} dt',
\end{equation*}
and
\begin{equation*}
\begin{split}
IV_{43}=&\int_0^t \sin6\t(t')\{\fc\} \rb(t')^4(1+\e(t'))^5\f{-1}{4\t'(t')} dt'\\
&+\int_0^t \sin2\t(t') \{\fc\} \rb(t')^4(1+\e(t'))^5\f{-1}{4\t'(t')} dt'.
\end{split}
\end{equation*}
For $IV_{41}$, using (\ref{rhob4}) we derive

\begin{equation*}
\begin{split}
|IV_{41}|\leq&\int_0^t \{\fc\} \rb(t')^6 dt'\\
\leq&\{1+\f{3\lambda^2\Gamma}{\Omega}\rho(0)^4 t\} \rho(0)^6\int_1^t \f{1}{\{1+\f{3\lambda^2\Gamma}{\Omega}\rho(0)^4 t'\}^{\f32}}dt'\\
\leq&\{1+\f{3\lambda^2\Gamma}{\Omega}\rho(0)^4 t\} \rho(0)^{2}\\
\leq&\{1+\f{3\lambda^2\Gamma}{\Omega}\rho(0)^4 t\} \d^2_0.\\
\end{split}
\end{equation*}
For $IV_{42}$, we employ integration by {\color{black}parts} once more:

\begin{equation*}
\begin{split}
 IV_{42}=&\int_0^t \sin4\t(t') \{\fc\} \rb(t')^4(1+\e(t'))^5\f{-1}{4\t'(t')} dt'\\
 =&\int_0^t \{\fc\} \rb(t')^4(1+\e(t'))^5\f{1}{16(\t'(t'))^2} d \cos4\t(t')\\
 =&\f{\rho(0)^4}{16(\theta'(0))^2}\cos4\theta(0)\\
 &+\{1+\f{3\lambda^2\Gamma}{\Omega}\rho(0)^4 t\}\rb(t)^4(1+\e(t))^5\f{1}{16(\t'(t))^2}\cos 4\t(t)\\
 &+\int_0^t \f{3\lambda^2\Gamma}{\Omega}\rho(0)^4 \rb(t')^4(1+\e(t'))^5\f{1}{16(\t'(t'))^2} \cos4\t(t') dt'\\
 &+\int_0^t \{\fc\} \rb(t')^3\rb'(t')(1+\e(t'))^5\f{1}{16(\t'(t'))^2} \cos4\t(t') dt'\\
 &+\int_0^t \{\fc\} \rb(t')^4(1+\e(t'))^4\e'(t')\f{1}{16(\t'(t'))^2} \cos4\t(t') dt'\\
 &-\int_0^t \{\fc\} \rb(t')^4(1+\e(t'))^5\f{\t''(t')}{8(\t'(t'))^3} \cos4\t(t')dt'\\
 =&IV_{420}+IV_{421}+IV_{422}+IV_{423}+IV_{424}+ IV_{425}.
\end{split}
\end{equation*}
With  (\ref{rhob4}), (\ref{t1b}), bootstrap assumption (\ref{bootstrap}), we derive

$$|IV_{420}|\leq \d^4_0,$$

$$|IV_{421}|\leq \{1+\f{3\lambda^2\Gamma}{\Omega}\rho(0)^4 t\}\d^4_0,$$

\begin{equation*}
\begin{split}
|IV_{422}|\leq& \rho(0)^8\int_0^t \f{1}{\{1+\f{3\lambda^2\Gamma}{\Omega}\rho(0)^4 t'\}}dt'\\
\leq& \rho(0)^8\int_0^t \f{1}{\{1+\f{3\lambda^2\Gamma}{\Omega}\rho(0)^4 t'\}^{\f12}}dt'\\
\leq& \rho(0)^4\{1+\f{3\lambda^2\Gamma}{\Omega}\rho(0)^4 t\}^{\f12}\\
\leq&\{1+\f{3\lambda^2\Gamma}{\Omega}\rho(0)^4 t\}\d^2_0.
\end{split}
\end{equation*}
Together with (\ref{rhob}), (\ref{epsilon}) and (\ref{t2b}), we derive

\begin{equation*}
\begin{split}
|IV_{423}|=&|\int_0^t \{\fc\} \rb(t')^3\rb'(t')(1+\e(t'))^5\f{1}{16(\t'(t'))^2} \cos4\t(t') dt'|\\
\leq&\{1+\f{3\lambda^2\Gamma}{\Omega}\rho(0)^4 t\}\int_0^t \rb(t')^8 dt'\\
\leq&\{1+\f{3\lambda^2\Gamma}{\Omega}\rho(0)^4 t\} \rho(0)^8\int_0^t \f{1}{\{1+\f{3\lambda^2\Gamma}{\Omega}\rho(0)^4 t'\}^2}dt'\\
\leq&\{1+\f{3\lambda^2\Gamma}{\Omega}\rho(0)^4 t\} \rho(0)^4\\
\leq&\{1+\f{3\lambda^2\Gamma}{\Omega}\rho(0)^4 t\} \d^4_0,\\
\end{split}
\end{equation*}

\begin{equation*}
\begin{split}
|IV_{424}|=&|\int_0^t \{\fc\} \rb(t')^4(1+\e(t'))^4\e'(t')\f{\cos4\t(t')}{16(\t'(t'))^2}  dt'|\\
\leq&\{1+\f{3\lambda^2\Gamma}{\Omega}\rho(0)^4 t\} \rho(0)^6\int_0^t \f{1}{\{1+\f{3\lambda^2\Gamma}{\Omega}\rho(0)^4 t'\}^{\f32}}dt'\\
&+\{1+\f{3\lambda^2\Gamma}{\Omega}\rho(0)^4 t\} \rho(0)^{3}\int_0^t \d^2_0 (1+t')^{-\f54-\delta}dt'\\
\leq&\{1+\f{3\lambda^2\Gamma}{\Omega}\rho(0)^4 t\} \rho(0)^{2}\\
\leq&\{1+\f{3\lambda^2\Gamma}{\Omega}\rho(0)^4 t\} \d^2_0,\\
\end{split}
\end{equation*}

\begin{equation*}
\begin{split}
|IV_{425}|=&|\int_0^t \{\fc\} \rb(t')^4(1+\e(t'))^5\f{\t''(t')}{8(\t'(t'))^3} \cos4\t(t')dt'|\\
\leq&\int_0^t \{\fc\} \rb(t')^4 \f{{\color{black}1}}{(1+t')^{\f12}} dt'\\
\leq&{\color{black}\rho(0)^4\int_0^t\f{1}{(1+t')^{\f12}}dt'\leq\rho(0)^4(1+t^{\f12})}\\
\leq&\{1+\f{3\lambda^2\Gamma}{\Omega}\rho(0)^4 t\} \d_0 \color{black}{\mbox{ by} \,\, (\ref{1001})}.\\
\end{split}
\end{equation*}
Combine all the bounds for $IV_{421}-IV_{425}$ together, we prove
$$|IV_{42}|\leq \{1+\f{3\lambda^2\Gamma}{\Omega}\rho(0)^4 t\} \d_0.$$
For $IV_{43}$, using the same method as for $IV_{42}$, we arrive at
$$|IV_{43}|\leq \{1+\f{3\lambda^2\Gamma}{\Omega}\rho(0)^4 t\} \d_0.$$
For $IV_{44}$, it works as $IV_{42}$ with an extra estimate $|E(t)|\leq \d^2_0 (1+t)^{-\f54-\delta}$ ($\delta>0$) used. We thus get
$$|IV_{44}|\leq \{1+\f{3\lambda^2\Gamma}{\Omega}\rho(0)^4 t\} \d^2_0.$$
Therefore, we deduce
$$|IV_{4}|\leq \{1+\f{3\lambda^2\Gamma}{\Omega}\rho(0)^4 t\} \d_0.$$
{\color{black}For 
$$IV_5:=\int_0^t \cos2\theta(t') \{\fc\} \rb(t')^2(1+\e(t'))^3\f{2}{2(\t'(t'))^2} \t''(t') dt',$$
using (\ref{theta2})-the expression of  $\theta''(t)$, we notice that the borderline terms in $IV_5$    
$$\int_0^t \cos2\theta(t') \{\fc\} \rb(t')^2(1+\e(t'))^3\f{2}{2(\t'(t'))^2}\cdot \rb(t')^2(1+\e(t'))^2\t'(t')\sin2\t(t')dt',$$
$$\int_0^t \cos2\theta(t') \{\fc\} \rb(t')^2(1+\e(t'))^3\f{2}{2(\t'(t'))^2}\cdot \rb(t')^2(1+\e(t'))^2\t'(t')\sin4\t(t')dt',$$
are exactly the same terms (up to a constant) we encountered in $IV_4$:
\begin{equation*}
IV_{42}:=\int_0^t \cos2\theta(t') \sin2\t(t') \{\fc\} \rb(t')^4(1+\e(t'))^5\f{-1}{2\t'(t')} dt',
\end{equation*}

\begin{equation*}
IV_{43}:=\int_0^t \cos2\theta(t') \sin4\t(t') \{\fc\} \rb(t')^4(1+\e(t'))^5\f{-1}{2\t'(t')} dt'.
\end{equation*}
In the same fashion as to bound $IV_{4}$, we have}
$$|IV_{5}|\leq \{1+\f{3\lambda^2\Gamma}{\Omega}\rho(0)^4 t\} \d_0.$$
Gather all the bounds for $IV_{1}-IV_{5}$, we have proved
$$|IV|\leq \{1+\f{3\lambda^2\Gamma}{\Omega}\rho(0)^4 t\} \d_0.$$
In the same manner, we bound $V$ term
$$|V|\leq \{1+\f{3\lambda^2\Gamma}{\Omega}\rho(0)^4 t\} \d_0.$$\\

\subsection{Conclusion of bootstrap argument}

Combining the bounds for $I-IV$, we have proved
$$\{1+\f{3\lambda^2\Gamma}{\Omega}\rho(0)^4 t\}|\e(t)|\leq 2\d^{\f12}_0 \{1+\f{3\lambda^2\Gamma}{\Omega}\rho(0)^4 t\}.$$
This implies
$$|\e(t)|\leq 2\d^{\f12}_0,$$
which is an improvement of bootstrap assumption \ref{bootstrap}
$$|\e(t)|\leq 3\d^{\f12}_0.$$
Hence, we have showed for all time $t\geq 0$
$$|\e(t)|\leq 3\d^{\f12}_0.$$
Together with the ansatz

$$\rho(t)=\rb(t)(1+\e(t)),$$
we conclude that

$$\f12 \rb(t)\leq \rho(t) \leq \f32 \rb(t).$$
Recall (\ref{rhob4})
\begin{equation*}
\rb(t)^4=\f{\rho(0)^4}{1+\f{3\lambda^2\Gamma}{\Omega}\rho(0)^4 t},
\end{equation*}
therefore we derived both upper and lower bounds for $\rho(t)$. And we have proved Theorem \ref{thm1.2}:
\begin{equation*}
\f{\f12\rho(0)}{\bigg( 1+\f{3\lambda^2\Gamma}{\O}\rho(0)^4 t \bigg)^{\f14} } \leq \rho(t) \leq \f{\f32\rho(0)}{\bigg( 1+\f{3\lambda^2\Gamma}{\O}\rho(0)^4 t \bigg)^{\f14} }.
\end{equation*}
\begin{remark}
The constants $1/2$ and $3/2$ could be improved to $1-2\d_0^{\f12}$ and $1+2\d_0^{\f12}$, respectively. 
\end{remark}

\section{Forward $H^1$ Scattering}
For $\eta$, from (\ref{eta}) to (\ref{eta3}) we have

\begin{equation*}
\eta(t,x)=\eta_1(t,x)+\eta_2(t,x)+\eta_3(t,x),  \quad \mbox{where}
\end{equation*}

\begin{equation*}
(\partial^2_t+B^2)\eta_1=0, \quad \eta_1(0,x)=P_c u_0, \quad \partial_t \eta_1(0,x)=P_c u_1,
\end{equation*}

\begin{equation*}
(\partial^2_t+B^2)\eta_2=\lambda a^3 P_c \psi^3, \quad \eta_2(0,x)=0, \quad \partial_t \eta_2(0,x)=0,
\end{equation*}

\begin{equation*}
(\partial^2_t+B^2)\eta_3=\lambda P_c(3a^2\psi^2\eta+3a\psi\eta^2+\eta^3), \quad \eta_3(0,x)=0, \quad \partial_t \eta_3(0,x)=0.
\end{equation*}
These equations imply

\begin{equation*}
\eta_1(t,x)=\cos Bt P_c u_0+\f{\sin Bt}{B} P_c u_1,
\end{equation*}

\begin{equation}\label{eta26}
\eta_2(t,x)=\lambda\int_0^t \f{\sin B(t-s)}{B} a^3P_c \psi^3 ds,
\end{equation}

\begin{equation}\label{eta36}
\eta_3(t,x)=\lambda\int_0^t \f{\sin B(t-s)}{B} P_c (3a^2\psi^2\eta+3a\psi\eta^2+\eta^3) ds.
\end{equation}
For the rest of this paper, we will show that
$$\eta_2(t,x)=\mbox{free wave}+R_2(t,x), \quad \quad \eta_3(t,x)=\mbox{free wave}+R_3(t,x),$$
where

$$\lim_{t\rightarrow +\infty} \|R_2(t,x)\|_{H^1_x}=0, \quad \mbox{and} \quad \lim_{t\rightarrow +\infty} \|R_3(t,x)\|_{H^1_x}=0.$$
We deal with $\eta_2$ and $\eta_3$ separately.

\subsection{The term $\eta_2(t,x)$}
For $\eta_2(t,x)$, we have

\begin{equation*}
\begin{split}
&\eta_2(t,x)\\
=&\lambda\int_0^t \f{\sin B(t-s)}{B} a^3(s)P_c \psi^3 ds\\
=&\lambda\int_0^t (\f{\sin Bt \cos Bs}{B}-\f{\cos Bt \sin Bs}{B}) a^3(s)P_c \psi^3 ds\\
=&\lambda \f{\sin Bt}{B} \int_0^t \cos Bs \,a^3(s)P_c \psi^3 ds-\lambda \cos Bt \int_0^t \f{\sin Bs}{B} a^3(s)P_c \psi^3 ds\\
=&\lambda \f{\sin Bt}{B} \int_0^{+\infty} \cos Bs \,a^3(s)P_c \psi^3 ds-\lambda \cos Bt \int_0^{+\infty} \f{\sin Bs}{B} a^3(s)P_c \psi^3 ds\\
&-\lambda \f{\sin Bt}{B} \int_t^{+\infty} \cos Bs \,a^3(s)P_c \psi^3 ds+\lambda \cos Bt \int_t^{+\infty} \f{\sin Bs}{B} a^3(s)P_c \psi^3 ds.\\
\end{split}
\end{equation*}
In the following, we will first prove
\begin{proposition}\label{6.1}
For $a(t)$ and $\psi(t,x)$ as in Sections 1-5, we have
$$\int_0^{+\infty} \cos Bs \,a^3(s)P_c \psi^3 ds \in L^2_x, \quad \int_0^{+\infty} \sin Bs \,a^3(s)P_c \psi^3 ds \in L^2_x,$$  
$$\int_0^{+\infty} \f{\sin Bs}{B} a^3(s)P_c \psi^3 ds \in L^2_x, \quad \mbox{and}$$

$$\|\int_t^{+\infty} \cos Bs \,a^3(s)P_c \psi^3 ds\|_{L^2_x}\rightarrow 0, \quad \|\int_t^{+\infty} \sin Bs \,a^3(s)P_c \psi^3 ds\|_{L^2_x}\rightarrow 0, $$
$$\|\int_t^{+\infty} \f{\sin Bs}{B} a^3(s)P_c \psi^3 ds\|_{L^2_x}\rightarrow 0, \quad \mbox{as} \quad t\rightarrow +\infty.$$

\end{proposition}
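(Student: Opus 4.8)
The plan is to exploit the auxiliary (Dirac-type) function $w(t,x)$ of Section 1, but with a variable initial time, and to convert the resulting mass bound into a Cauchy criterion in $L^2_x$. The obstruction to the naive argument is that $|a^3(s)|\lesssim(1+s)^{-3/4}$ (by Theorem \ref{thm1.2}, since $\rho(t)\lesssim(1+t)^{-1/4}$) is not integrable in $s$, so the three Duhamel-type integrals cannot be shown to converge in $L^2_x$ by absolute convergence. To circumvent this, for $0\le t_1\le t$ I would set
$$w^{t_1}(t,x):=-i\int_{t_1}^t e^{iB(t-s)}\lambda a^3(s)P_c\psi^3(x)\,ds,$$
so that $w^{t_1}$ solves $(i\partial_t+B)w^{t_1}=\lambda a^3 P_c\psi^3$ with $w^{t_1}(t_1,x)=0$, and $w^0=w$. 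For each finite $t$ one has $w^{t_1}(t,\cdot)\in L^2_x$, and since $B$ is self-adjoint and $a(s),P_c\psi^3$ are real, the computation already performed in Section 1 applies verbatim: with $l^{t_1}(t):=\|w^{t_1}(t,x)\|_{L^2_x}^2$,
$$\frac{d}{dt}l^{t_1}(t)=-2\lambda a^3(t)\int_{\mathbb{R}^3}\mathrm{Im}\,w^{t_1}(t,x)\,P_c\psi^3(x)\,dx,\qquad l^{t_1}(t_1)=0.$$

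First I would establish the mass estimate $l^{t_1}(t)\le C(1+t_1)^{-1/2}$ for all $t\ge t_1$, uniformly in $t_1$. Since $\mathrm{Im}\,w^{t_1}(t,x)=-\int_{t_1}^t\sin B(t-s)\,\lambda a^3(s)P_c\psi^3\,ds$ is given by the same Duhamel formula as $\mathrm{Im}\,w$ but with lower limit $t_1$, the triangle inequality together with the dispersive estimates for $B$ (Theorem 2.1 in \cite{SW}), exactly as in the derivation of $\|\mathrm{Re}\,w,\mathrm{Im}\,w\|_{L^8_x}\le(1+t)^{-3/4}$ in Section 1, gives $\|\mathrm{Im}\,w^{t_1}(t,x)\|_{L^8_x}\le C(1+t)^{-3/4}$ for $t\ge t_1$ with $C$ independent of $t_1$ (restricting the $s$-integral to $[t_1,t]$ only decreases the bound, and $\psi$ smooth and exponentially decaying makes $\|P_c\psi^3\|_{W^{k,8/7}_x}<\infty$). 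Combining this with $|a^3(t)|\le C(1+t)^{-3/4}$ and integrating the identity above from $t_1$ to $t$ yields, by Hölder,
$$l^{t_1}(t)\le 2|\lambda|\,\|P_c\psi^3\|_{L^{8/7}_x}\int_{t_1}^{t}\|\mathrm{Im}\,w^{t_1}(s,x)\|_{L^8_x}\,|a^3(s)|\,ds\le C\int_{t_1}^{t}(1+s)^{-3/2}\,ds\le C(1+t_1)^{-1/2}.$$
Because $e^{iB\tau}$ is unitary on $L^2_x$, this says precisely that $\big\|\int_{t_1}^t e^{-iBs}\lambda a^3(s)P_c\psi^3\,ds\big\|_{L^2_x}=\sqrt{l^{t_1}(t)}\le C(1+t_1)^{-1/4}$, so $V(t):=\int_0^t e^{-iBs}\lambda a^3(s)P_c\psi^3\,ds$ is Cauchy in $L^2_x$ as $t\to+\infty$; I denote its limit by $V_\infty\in L^2_x$ and record $\|V(t)-V_\infty\|_{L^2_x}\le C(1+t)^{-1/4}\to0$.

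The remaining bookkeeping is then routine. Writing $e^{-iBs}=\cos Bs-i\sin Bs$ and using that $B$ maps real functions to real functions (so $\cos Bs\,a^3P_c\psi^3$ and $\sin Bs\,a^3P_c\psi^3$ are real-valued), I read off $\mathrm{Re}\,V(t)=\lambda\int_0^t\cos Bs\,a^3P_c\psi^3\,ds$ and $-\mathrm{Im}\,V(t)=\lambda\int_0^t\sin Bs\,a^3P_c\psi^3\,ds$, both of which therefore converge in $L^2_x$; taking real and imaginary parts of $V_\infty-V(t)$ and using that projection onto the real or imaginary part does not increase the $L^2_x$ norm gives the vanishing of $\big\|\int_t^{+\infty}\cos Bs\,a^3P_c\psi^3\,ds\big\|_{L^2_x}$ and of $\big\|\int_t^{+\infty}\sin Bs\,a^3P_c\psi^3\,ds\big\|_{L^2_x}$ as $t\to+\infty$. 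For the two terms carrying $B^{-1}$ I would use that on $P_cL^2_x$ the operator $B$ has spectrum in $[m,+\infty)$, so $B^{-1}$ is bounded there (with norm $\le1/m$) and commutes with $\sin Bs$ and with the Bochner integral; hence $\int_0^{+\infty}\frac{\sin Bs}{B}a^3P_c\psi^3\,ds=B^{-1}\int_0^{+\infty}\sin Bs\,a^3P_c\psi^3\,ds\in L^2_x$ and $\big\|\int_t^{+\infty}\frac{\sin Bs}{B}a^3P_c\psi^3\,ds\big\|_{L^2_x}\le\frac1m\big\|\int_t^{+\infty}\sin Bs\,a^3P_c\psi^3\,ds\big\|_{L^2_x}\to0$. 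The one genuinely delicate point is the uniform-in-$t_1$ decay $\|\mathrm{Im}\,w^{t_1}(t)\|_{L^8_x}\le C(1+t)^{-3/4}$ feeding the mass estimate: everything hinges on the product $\|\mathrm{Im}\,w^{t_1}\|_{L^8_x}\,|a^3|$ being integrable in time with a quantitative tail, which is exactly the cancellation that the auxiliary Dirac-type equation is designed to exhibit; once this is in place the rest reduces to the elementary convolution bound $\int_0^t\langle t-s\rangle^{-9/8}\langle s\rangle^{-3/4}\,ds\lesssim\langle t\rangle^{-3/4}$ and a Cauchy-sequence argument.
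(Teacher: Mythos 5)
Your proposal is correct and rests on the same core mechanism as the paper's proof---an auxiliary Dirac-type function solving $(i\partial_t+B)w=\lambda a^3 P_c\psi^3$, a mass estimate $\frac{d}{dt}\|w\|_{L^2_x}^2 = -2\lambda a^3(t)\int \mathrm{Im}\,w\cdot P_c\psi^3\,dx$ exploiting self-adjointness of $B$, and the integrability of $\|\mathrm{Im}\,w\|_{L^8_x}\cdot|a^3|\sim (1+t)^{-3/2}$. The reorganization you propose, however, is genuinely different in its bookkeeping and is, in a couple of respects, tighter than what is in the paper.

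The paper's version constructs two distinct auxiliary functions: a forward one $w$ with $w(0,x)=0$ to prove Lemma~\ref{lemma6.3} (finiteness of $\int_0^t\sin B(t-s)\,\lambda a^3 P_c\psi^3\,ds$ and its cosine analogue in $L^2_x$), and a backward one $\underline{w}$ with $\underline{w}(+\infty,x)=0$ to prove Lemma~\ref{lemma6.4} (vanishing of tails). These lemmas deliver the Duhamel forms $\int_0^t\sin B(t-s)\cdots$, and the paper then needs the angle-difference identities (\ref{621})--(\ref{622}) together with linear combinations by the bounded operators $\sin Bt,\cos Bt$ to extract the quantities $\int_0^t\cos Bs\,a^3P_c\psi^3\,ds$ and $\int_0^t\sin Bs\,a^3P_c\psi^3\,ds$ individually. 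Your version replaces the two auxiliary functions by the one-parameter family $w^{t_1}$ with starting time $t_1$, obtains the uniform-in-$t_1$ bound $l^{t_1}(t)\leq C(1+t_1)^{-1/2}$, and then uses the unitary factorization $w^{t_1}(t)=-ie^{iBt}\int_{t_1}^t e^{-iBs}\lambda a^3P_c\psi^3\,ds$ to read off, with no trigonometric gymnastics, a quantitative Cauchy estimate on $V(t)=\int_0^t e^{-iBs}\lambda a^3 P_c\psi^3\,ds$. Taking real and imaginary parts of $V$ then gives all six conclusions of Proposition~\ref{6.1} at once, with the extra dividend of an explicit decay rate $(1+t)^{-1/4}$ for the tails. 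This is cleaner, and it also sidesteps a point the paper leaves unaddressed: the backward function $\underline{w}(t,x)=-i\int_{+\infty}^t e^{iB(t-s)}\lambda a^3 P_c\psi^3\,ds$ is defined by an improper integral whose convergence in $L^2_x$ is exactly the content of the proposition being proved, so the paper's Lemma~\ref{lemma6.4} is slightly circular as written; your family $w^{t_1}$ involves only finite integrals and is therefore always well-defined, and convergence is established a posteriori via the Cauchy criterion. One minor point worth making explicit in a final write-up: when you invoke the $(1+t)^{-3/4}$ bound on $\|\mathrm{Im}\,w^{t_1}(t)\|_{L^8_x}$ uniformly in $t_1$, the justification (which you do gesture at) is simply that the integrand in $\int_{t_1}^t(1+t-s)^{-9/8}(1+s)^{-3/4}\,ds$ is nonnegative, so restricting the domain to $[t_1,t]$ only decreases the already-established $\int_0^t$ bound.
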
 

\begin{remark} These conclusions {\color{black}cannot be deduced} directly from dispersive estimates for $\cos Bs$ and $\sin Bs/ B$. This is because for $a(t)$ $(t>0)$, we only have $|a(t)|\approx 1/(1+t)^{\f14}$. 
And this slow decay rate is not enough to prove Proposition \ref{6.1}. In the below, we will introduce a new approach by constructing auxiliary functions and by exploring structures of the nonlinear Klein-Gordon equation. 
\end{remark}

Proposition \ref{6.1} is a crucial middle step. Let's first show that conclusions in Proposition \ref{6.1} imply the main scattering results.  Since $\sin Bt / B$ and $\cos Bt$ are bounded operators for $L^2_x$, the conclusions in Proposition \ref{6.1} yield
\begin{proposition}\label{new 6.2}
For $a(t)$ and $\psi(t,x)$ as in Sections 1-5, we have
$$\|-\lambda \f{\sin Bt}{B}\int_t^{+\infty} \cos Bs \,a^3(s)P_c \psi^3 ds\|_{L^2_x}\rightarrow 0,$$
$$\|\lambda \cos Bt \int_t^{+\infty} \f{\sin Bs}{B} a^3(s)P_c \psi^3 ds\|_{L^2_x}\rightarrow 0,$$
as $t\rightarrow +\infty$.
\end{proposition}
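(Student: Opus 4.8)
The plan is to derive Proposition \ref{new 6.2} as an immediate consequence of Proposition \ref{6.1} together with the uniform boundedness of the operators $\cos Bt$ and $\sin Bt / B$ on $L^2_x$. Since $B^2 = -\Delta + V + m^2$ is self-adjoint with continuous spectrum $[m^2,+\infty)$ and a single eigenvalue $\O^2 < m^2$, on the continuous spectral subspace $\mathrm{Ran}\,P_c$ the operator $B$ is bounded below by $m$; hence $\sin Bt/B$ is bounded on $L^2_x$ with norm at most $1/m$ (uniformly in $t$), and $\cos Bt$ is unitary, so bounded with norm $1$. Because the integrands $a^3(s) P_c\psi^3$ lie in $\mathrm{Ran}\,P_c$, these bounds apply to the tail integrals appearing in the statement.

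Concretely, I would write
\begin{equation*}
\Big\| -\lambda \f{\sin Bt}{B}\int_t^{+\infty} \cos Bs \, a^3(s) P_c\psi^3 \, ds \Big\|_{L^2_x} \leq \f{|\lambda|}{m} \Big\| \int_t^{+\infty} \cos Bs \, a^3(s) P_c\psi^3 \, ds \Big\|_{L^2_x},
\end{equation*}
and similarly
\begin{equation*}
\Big\| \lambda \cos Bt \int_t^{+\infty} \f{\sin Bs}{B} a^3(s) P_c\psi^3 \, ds \Big\|_{L^2_x} \leq |\lambda| \, \Big\| \int_t^{+\infty} \f{\sin Bs}{B} a^3(s) P_c\psi^3 \, ds \Big\|_{L^2_x}.
\end{equation*}
By Proposition \ref{6.1}, both right-hand sides tend to $0$ as $t\to+\infty$, which is exactly the claimed conclusion. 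The operators $\cos Bt$, $\sin Bt/B$ pull out of the $L^2_x$ norm because they are bounded independently of $t$; the spectral decay inside the tail is entirely carried by Proposition \ref{6.1}.

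Since this deduction is purely a matter of operator boundedness, there is essentially no obstacle here — the real work is Proposition \ref{6.1} itself, which is proved separately via the auxiliary function $w(t,x)$ and the differential inequality for $l(t) = \|w(t,x)\|_{L^2_x}^2$ sketched in the introduction. The one point deserving a line of care is justifying that the tail integrals genuinely define $L^2_x$ functions (so that the triangle-inequality manipulation is legitimate and not merely formal); this is guaranteed by the first set of assertions in Proposition \ref{6.1}, namely that $\int_0^{+\infty}\cos Bs\, a^3(s) P_c\psi^3\,ds$ and $\int_0^{+\infty} (\sin Bs/B)\, a^3(s) P_c\psi^3\,ds$ converge in $L^2_x$, whence their tails from $t$ to $+\infty$ also lie in $L^2_x$. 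With that in hand the two displayed estimates are immediate and Proposition \ref{new 6.2} follows.
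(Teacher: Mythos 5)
Your proof is correct and follows exactly the paper's own argument: the paper deduces Proposition \ref{new 6.2} from Proposition \ref{6.1} by simply noting that $\sin Bt/B$ and $\cos Bt$ are $L^2_x$-bounded operators uniformly in $t$. Your version is slightly more explicit (making the $1/m$ bound on $\sin Bt/B$ restricted to $\mathrm{Ran}\,P_c$ precise, and noting that the tails lie in $L^2_x$ because of the first part of Proposition \ref{6.1}), but it is the same one-line deduction.
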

We can further improve this proposition to 
\begin{proposition}
For $a(t)$ and $\psi(t,x)$ as in Sections 1-5, we have
$$\|-\lambda \f{\sin Bt}{B}\int_t^{+\infty} \cos Bs \,a^3(s)P_c \psi^3 ds\|_{H^1_x}\rightarrow 0,$$
$$\|\lambda \cos Bt \int_t^{+\infty} \f{\sin Bs}{B} a^3(s)P_c \psi^3 ds\|_{H^1_x}\rightarrow 0,$$
as $t\rightarrow +\infty$.
\end{proposition}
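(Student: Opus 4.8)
The plan is to deduce the $H^1_x$ decay directly from the $L^2_x$ decay already established. Since $B$ is self-adjoint and, via the spectral theorem, commutes with $\cos Bt$, $\sin Bt$ and $\f{\sin Bt}{B}$, and since for any $g$ in the range of $P_c$ one has the equivalence of norms
$$\|g\|_{H^1_x}\approx \|g\|_{L^2_x}+\|Bg\|_{L^2_x},$$
it suffices to control the $L^2_x$ norm of $B$ applied to each of the two expressions, the $L^2_x$ norm of the expressions themselves being already handled by Proposition \ref{new 6.2}. The norm equivalence is itself elementary: from $B^2=-\Delta+V(x)+m^2$ and (V1) (so $V\in L^\infty_x$) one gets $\big|\,\|Bg\|_{L^2_x}^2-\|\nab g\|_{L^2_x}^2-m^2\|g\|_{L^2_x}^2\,\big|\le \|V\|_{L^\infty_x}\|g\|_{L^2_x}^2$, which gives $\|g\|_{H^1_x}\approx\|g\|_{L^2_x}+\|Bg\|_{L^2_x}$; moreover on the continuous subspace one has $B^2\ge m^2>0$, since $\sigma_{cont}(B^2)=[m^2,+\infty)$ and the positive eigenvalue $\O^2$ is projected out, so $\f1B$ is bounded on $P_cL^2_x$ with $\|\f1B\|\le \f1m$.

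First I would treat $-\lambda\f{\sin Bt}{B}\int_t^{+\infty}\cos Bs\,a^3(s)P_c\psi^3\,ds$. By Proposition \ref{6.1} the function $g_t:=\int_t^{+\infty}\cos Bs\,a^3(s)P_c\psi^3\,ds$ is a well-defined element of $L^2_x$ in the range of $P_c$ with $\|g_t\|_{L^2_x}\to 0$ as $t\to+\infty$. By the spectral theorem $\f{\sin Bt}{B}g_t$ lies in $\mathrm{dom}(B)$ and $B\big(\f{\sin Bt}{B}g_t\big)=\sin(Bt)\,g_t$, hence by the norm equivalence
$$\Big\|-\lambda\f{\sin Bt}{B}\,g_t\Big\|_{H^1_x}\lesssim |\lambda|\,\Big\|\f{\sin Bt}{B}g_t\Big\|_{L^2_x}+|\lambda|\,\big\|\sin(Bt)\,g_t\big\|_{L^2_x}\le |\lambda|\,\Big(\f1m+1\Big)\,\|g_t\|_{L^2_x}\longrightarrow 0,$$
the first $L^2_x$ term being also covered by Proposition \ref{new 6.2}.

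Next I would handle $\lambda\cos Bt\int_t^{+\infty}\f{\sin Bs}{B}a^3(s)P_c\psi^3\,ds$. Set $h_t:=\int_t^{+\infty}\f{\sin Bs}{B}a^3(s)P_c\psi^3\,ds$, which by Proposition \ref{6.1} lies in $L^2_x$, is in the range of $P_c$, and satisfies $\|h_t\|_{L^2_x}\to 0$. The one genuine technical point is to identify $Bh_t$: applying $B$ to each truncated integral $\int_t^{N}\f{\sin Bs}{B}a^3(s)P_c\psi^3\,ds$, where $B\f{\sin Bs}{B}=\sin Bs$ by functional calculus, yields $\int_t^{N}\sin Bs\,a^3(s)P_c\psi^3\,ds$; as $N\to+\infty$ the truncations converge to $h_t$ in $L^2_x$, while by the convergence assertion of Proposition \ref{6.1} their $B$-images converge in $L^2_x$ to $\int_t^{+\infty}\sin Bs\,a^3(s)P_c\psi^3\,ds$, so by closedness of $B$ we get $h_t\in\mathrm{dom}(B)$ with $Bh_t=\int_t^{+\infty}\sin Bs\,a^3(s)P_c\psi^3\,ds$. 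Since $\cos Bt$ commutes with $B$, $B(\cos Bt\,h_t)=\cos(Bt)\int_t^{+\infty}\sin Bs\,a^3(s)P_c\psi^3\,ds$, whose $L^2_x$ norm is at most $\big\|\int_t^{+\infty}\sin Bs\,a^3(s)P_c\psi^3\,ds\big\|_{L^2_x}\to 0$ by Proposition \ref{6.1}. Together with $\|\cos Bt\,h_t\|_{L^2_x}\le\|h_t\|_{L^2_x}\to 0$ and the norm equivalence, the $H^1_x$ norm of this term also tends to $0$.

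The main obstacle is bookkeeping rather than anything deep: one must make precise the interchange of $B$ with the improper integral defining $h_t$ (the closedness argument above) and consistently keep the operators $\f1B$, $\sin(Bt)$, $\cos(Bt)$ acting on the continuous subspace $P_cL^2_x$, where the spectral gap $B^2\ge m^2>0$ is exactly what makes $\f1B$ bounded. With these in place the $H^1_x$ decay is an immediate consequence of the $L^2_x$ statements in Propositions \ref{6.1} and \ref{new 6.2}.
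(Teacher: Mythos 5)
Your proof follows essentially the same route as the paper's: apply $B$ to each expression (using functional-calculus commutation to cancel the $\frac1B$), invoke Proposition~\ref{6.1} for the resulting $L^2_x$ decay, combine with Proposition~\ref{new 6.2}, and close via the inequality $\|f\|^2_{\dot H^1_x}\lesssim \|Bf\|^2_{L^2_x}+\|f\|^2_{L^2_x}$ coming from $B^2=-\Delta+V+m^2$ with $V\in L^\infty_x$. The only difference is that you make explicit two points the paper treats as understood --- that $\frac1B$ is bounded on $P_cL^2_x$ by the spectral gap $B^2\geq m^2$ there, and that $B$ passes through the improper integral defining $h_t$ by a truncation-plus-closedness argument --- which is a welcome tightening rather than a change of strategy.
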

\begin{proof}
Using Proposition \ref{6.1}, we also have as $t\rightarrow +\infty$
\begin{equation}\label{new eqn 6.5}
\begin{split}
&\|B\cdot \l -\lambda \f{\sin Bt}{B}\int_t^{+\infty} \cos Bs \,a^3(s)P_c \psi^3 ds \rr \|_{L^2_x}\\
=&\|-\lambda \sin Bt \int_t^{+\infty} \cos Bs \,a^3(s)P_c \psi^3 ds\|_{L^2_x}\rightarrow 0,\\
&\| B\cdot \l  \lambda \cos Bt \int_t^{+\infty} \f{\sin Bs}{B} a^3(s)P_c \psi^3 ds   \rr \|_{L^2_x}\\
=&\|\lambda \cos Bt \int_t^{+\infty} \sin Bs a^3(s)P_c \psi^3 ds\|_{L^2_x}\rightarrow 0.
\end{split}
\end{equation}
For any $f\in H^1_x$, since $B^2=-\Delta+V(x)+m^2$, we have
$$\int_{\mathbb{R}^3} \l |\nab f|^2+V(x)f^2+m^2 f^2 \rr dx=\|Bf\|^2_{L^2_x}. $$
Using the assumption that $\|V(x)\|_{L^{\infty}_x}$ is bounded by a constant $C$, we arrive at 
\begin{equation}\label{B lemma}
\begin{split}
\|f\|^2_{\dot{H}^1_x}\leq \|Bf\|^2_{L^2_x}+C\|f\|^2_{L^2_x}.
\end{split}
\end{equation}
Together with Proposition \ref{new 6.2} and (\ref{new eqn 6.5}), we deduce the conclusions in this proposition.\\
\end{proof}

We notice that
$$\tilde{u}(t,x):=\lambda \f{\sin Bt}{B} \int_0^{+\infty} \cos Bs \,a^3(s)P_c \psi^3(x) ds-\lambda \cos Bt \int_0^{+\infty} \f{\sin Bs}{B} a^3(s)P_c \psi^3(x) ds$$
is a free wave. (It satisfies $\partial^2_{t} \tilde{u}(t,x)+B^2 \tilde{u}(t,x)=0$.{\color{black})} Therefore, once Proposition \ref{6.1} is proved, we will arrive at
\begin{proposition}\label{6.2}
For $\eta_2$ is as in (\ref{eta26}), we have
$$\eta_2(t,x)=\mbox{free wave}+R_2(t,x), \,\mbox{where}\, \lim_{t\rightarrow +\infty} \|R_2(t,x)\|_{H^1_x}=0,$$
\end{proposition}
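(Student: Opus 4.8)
The plan is to extract the decomposition of $\eta_2$ directly from the identity derived at the start of this subsection and to invoke Proposition \ref{6.1}. Splitting the Duhamel integral defining $\eta_2$ at $s=t$ and separating the addition-formula phases, we have already written
\begin{equation*}
\eta_2(t,x)=\tilde u(t,x)-\lambda\f{\sin Bt}{B}\int_t^{+\infty}\cos Bs\,a^3(s)P_c\psi^3\,ds+\lambda\cos Bt\int_t^{+\infty}\f{\sin Bs}{B}a^3(s)P_c\psi^3\,ds,
\end{equation*}
with $\tilde u$ the fixed-interval expression above. Accordingly I would set
\begin{equation*}
R_2(t,x):=-\lambda\f{\sin Bt}{B}\int_t^{+\infty}\cos Bs\,a^3(s)P_c\psi^3\,ds+\lambda\cos Bt\int_t^{+\infty}\f{\sin Bs}{B}a^3(s)P_c\psi^3\,ds,
\end{equation*}
so that $\eta_2=\tilde u+R_2$, and it remains to show that $\tilde u$ is a free wave and that $\|R_2(t,x)\|_{H^1_x}\to 0$.

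First I would settle the free-wave claim. Put $S_1:=\lambda\int_0^{+\infty}\cos Bs\,a^3(s)P_c\psi^3\,ds$ and $S_2:=\lambda\int_0^{+\infty}\f{\sin Bs}{B}a^3(s)P_c\psi^3\,ds$, so that $\tilde u=\f{\sin Bt}{B}S_1-\cos Bt\,S_2$. By Proposition \ref{6.1} we have $S_1\in L^2_x$, $S_2\in L^2_x$, and $BS_2=\lambda\int_0^{+\infty}\sin Bs\,a^3(s)P_c\psi^3\,ds\in L^2_x$; combined with (\ref{B lemma}) the last two facts give $S_2\in H^1_x$. Since $\cos Bt$ and $\sin Bt/B$ are the strongly continuous operator families solving $\partial_t^2+B^2=0$, it follows that $\partial_t^2\tilde u+B^2\tilde u=0$, i.e. $\tilde u$ is a free wave, with data of exactly the regularity appearing in Theorem \ref{thm1.3}. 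Furthermore $<S_1,\psi>=<S_2,\psi>=0$, because $P_c\psi^3\perp\psi$ and $\cos Bs$, $\sin Bs/B$ commute with $P_c$.

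Next I would control $R_2$. Since $\sin Bt/B$ and $\cos Bt$ are bounded on $L^2_x$, Proposition \ref{6.1} forces both terms of $R_2$ to $0$ in $L^2_x$ as $t\to+\infty$ (this is Proposition \ref{new 6.2}), and the preceding proposition upgrades this to convergence to $0$ in $H^1_x$ via (\ref{B lemma}) together with the identity $B\cdot\f{\sin Bt}{B}=\sin Bt$ and the boundedness of $\cos Bt$ on $L^2_x$. Hence $\|R_2(t,x)\|_{H^1_x}\to 0$, and the decomposition $\eta_2=\tilde u+R_2$ is the one asserted.

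The only genuine obstacle is Proposition \ref{6.1}, which is assumed here and proved below. It cannot be read off from the dispersive estimates for $\cos Bs$ and $\sin Bs/B$, since those recover positive powers of $s$ only in $L^p_x$ with $p>2$, while $|a(s)|\sim(1+s)^{-1/4}$ decays too slowly to close the $L^2_x$ bound directly; this is precisely where the auxiliary Dirac-type function $w(t,x)$ of the introduction is needed. Granting Proposition \ref{6.1}, the remainder of the argument is routine bookkeeping: the split of the Duhamel integral at $s=t$, the recognition of the fixed-interval piece as a free wave, and the appeal to the already-established $H^1_x$ decay of the tails.
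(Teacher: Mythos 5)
Your proof is correct and follows essentially the same route as the paper: split the Duhamel integral at $s=t$ via angle-addition, identify the fixed-interval piece $\tilde u$ as a free wave, take $R_2$ to be the tail integrals, and invoke Proposition \ref{6.1} together with the $L^2_x$-boundedness of $\sin Bt/B$, $\cos Bt$ and the inequality (\ref{B lemma}) to get $H^1_x$ decay of $R_2$. Your added verification that $S_1\in L^2_x$, $S_2\in H^1_x$, and $\langle S_j,\psi\rangle=0$ is a small but welcome completeness check that the paper records only in the statement of Theorem \ref{thm1.3}.
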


We now move back to prove Proposition \ref{6.1}. We will verify it by establishing two lemmas. Let's move to the first one:

\begin{lemma}\label{lemma6.3}
For $a(t)$ and $\psi(t,x)$ as in Sections 1-5, we have
\begin{equation}\label{eq613}
\int_0^t \sin B(t-s)\,\lambda a^3(s)P_c \psi^3(x) ds \in L^2_x,
\end{equation}

\begin{equation}\label{eq614}
\int_0^t \cos B(t-s)\,\lambda a^3(s)P_c \psi^3(x) ds \in L^2_x,
\end{equation}
for any $t>0$. 

\end{lemma}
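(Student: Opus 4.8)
The plan is as follows. For each \emph{fixed} $t>0$ the membership \eqref{eq613}--\eqref{eq614} is in fact elementary: $\cos B(t-s)$ and $\sin B(t-s)$ are bounded on $L^2_x$ (with norm $\le 1$) by self-adjointness of $B$, while $\int_0^t|a(s)|^3\,ds\le C\int_0^t(1+s)^{-3/4}\,ds<\infty$ for every finite $t$ by the upper bound of Theorem \ref{thm1.2}, so the integrands are Bochner-integrable $L^2_x$-valued functions on $[0,t]$. The point of the argument below is rather to prove the stronger, \emph{uniform} bound $\sup_{t\ge 0}\|w(t,\cdot)\|_{L^2_x}<\infty$ for the auxiliary function $w$ introduced in Section 1 --- this is what Proposition \ref{6.1} will need in order to pass to the improper integrals $\int_0^{+\infty}$, and it cannot follow from absolute values since $\int_0^{+\infty}|a(s)|^3\,ds=+\infty$; one must exploit the oscillation of $\cos B(t-s)$ against $a^3$.

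Recall that $w$ solves the first-order (``Dirac-type'') equation $(i\partial_t+B)w=\lambda a^3P_c\psi^3$ with $w(0,x)=0$, so $w(t,x)=-i\lambda\int_0^t e^{iB(t-s)}a^3(s)P_c\psi^3(x)\,ds$; since $a(t)=2\rho(t)\cos\theta(t)$ and $\psi$ are real-valued, the decomposition $e^{iB\tau}=\cos B\tau+i\sin B\tau$ shows that $\mathrm{Re}\,w(t,\cdot)$ and $\mathrm{Im}\,w(t,\cdot)$ coincide, up to sign, with the two integrals in \eqref{eq613}--\eqref{eq614}, so $w(t,\cdot)\in L^2_x$ gives both. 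Put $l(t):=\|w(t,\cdot)\|_{L^2_x}^2$. Since $B$ is self-adjoint on $P_cL^2_x$, $\langle Bw,w\rangle$ is real, the $iBw$ term drops out of $\frac{d}{dt}\|w\|_{L^2_x}^2$, and
\[
l'(t)=2\,\mathrm{Re}\langle\partial_t w,w\rangle=-2\lambda\int_{\mathbb R^3}a^3(t)\,P_c\psi^3(x)\,\mathrm{Im}\,w(t,x)\,dx .
\]
Hölder's inequality with exponents $8$ and $8/7$, the bound $|a(t)|\le C(1+t)^{-1/4}$ from Theorem \ref{thm1.2}, and $\|P_c\psi^3\|_{L^{8/7}_x}<\infty$ then give $|l'(t)|\lesssim(1+t)^{-3/4}\|\mathrm{Im}\,w(t,\cdot)\|_{L^8_x}$.

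The crux, and the step I expect to be the main obstacle, is the dispersive decay estimate $\|\mathrm{Re}\,w(t,\cdot)\|_{L^8_x}+\|\mathrm{Im}\,w(t,\cdot)\|_{L^8_x}\lesssim(1+t)^{-3/4}$, which has to be obtained from $w(t)=-i\lambda\int_0^t e^{iB(t-s)}a^3(s)P_c\psi^3\,ds$ in spite of the slow decay $|a(s)|\sim(1+s)^{-1/4}$. I would split the $s$-integral at $t/2$ and invoke the Klein--Gordon dispersive estimate (Theorem 2.1 of \cite{SW}) in the form $\|e^{iB\tau}P_cg\|_{L^8_x}\lesssim(1+|\tau|)^{-\kappa}\|g\|_{W^{N,8/7}_x}$ for a suitable $\kappa>1$ and $N$ (admissible since $P_c\psi^3$ and its derivatives up to order $N$ are finite in $L^{8/7}_x$, by the regularity and exponential decay of $\psi$). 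On $[t/2,t]$ one uses $|a(s)|^3\lesssim(1+t)^{-3/4}$ and $\int_{t/2}^t(1+|t-s|)^{-\kappa}\,ds\lesssim1$; on $[0,t/2]$ one uses $(1+|t-s|)^{-\kappa}\lesssim(1+t)^{-\kappa}$ together with $\int_0^{t/2}|a(s)|^3\,ds\lesssim(1+t)^{1/4}$, giving a contribution $\lesssim(1+t)^{-\kappa+1/4}\lesssim(1+t)^{-3/4}$. The resonance at $3\Omega$ hidden in $a^3$ prevents any rate faster than $(1+t)^{-3/4}$, but this is exactly what is needed.

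Combining the last two estimates gives $|l'(t)|\lesssim(1+t)^{-3/2}$, which is integrable on $[0,+\infty)$; since $l(0)=0$ we obtain $l(t)\le\int_0^t|l'(s)|\,ds\le C\int_0^{+\infty}(1+s)^{-3/2}\,ds<\infty$ uniformly in $t$. Hence $w(t,\cdot)\in L^2_x$ for every $t>0$, and therefore $\int_0^t\sin B(t-s)\,\lambda a^3(s)P_c\psi^3\,ds=\mathrm{Re}\,w(t,\cdot)$ and $\int_0^t\cos B(t-s)\,\lambda a^3(s)P_c\psi^3\,ds=-\mathrm{Im}\,w(t,\cdot)$ both lie in $L^2_x$, which is \eqref{eq613}--\eqref{eq614}; the uniform bound just established is what the next lemma will use to reach the $\int_0^{+\infty}$ statements in Proposition \ref{6.1}.
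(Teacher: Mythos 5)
Your proof is correct and follows essentially the same route as the paper's: introduce $w$ solving the Dirac-type equation $(i\partial_t+B)w=\lambda a^3P_c\psi^3$, identify $\mathrm{Re}\,w,\,\mathrm{Im}\,w$ with the two integrals, compute $\tfrac{d}{dt}\|w\|^2_{L^2_x}$ via self-adjointness, feed in the $L^8_x$ dispersive decay $\|w(t,\cdot)\|_{L^8_x}\lesssim(1+t)^{-3/4}$ (obtained by splitting at $t/2$), and integrate. Your opening observation that the fixed-$t$ membership in $L^2_x$ is already immediate from Bochner integrability of an $L^2_x$-valued integrand --- and that the real content of the lemma is the \emph{uniform} bound $\sup_t\|w(t,\cdot)\|_{L^2_x}<\infty$ needed by Proposition~\ref{6.1} --- is a correct and sharper reading of what the argument actually establishes than the way the lemma is phrased.
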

\begin{proof}
We construct an auxiliary complex-valued function $w(t,x)$:
$$w(t,x)=\int_0^t \cos B(t-s)\,\lambda a^3(s)P_c \psi^3(x) ds-i\cdot \int_0^t \sin B(t-s)\,\lambda a^3(s)P_c \psi^3(x) ds.$$
Thus,
\begin{equation}\label{Im}
\mbox{Im}\,w(t,x)=-\int_0^t \sin B(t-s)\,\lambda a^3(s)P_c \psi^3(x) ds,
\end{equation}
and
\begin{equation}\label{Re}
\mbox{Re}\,w(t,x)=\int_0^t \cos B(t-s)\,\lambda a^3(s)P_c \psi^3(x) ds.
\end{equation}
From standard dispersive estimates (see Theorem 2.1 in \cite{SW}), we have
$$\mbox{Im}\,w(t,x), \, \mbox{Re}\,w(t,x) \in L^8_x \, \mbox{for all}\, t,$$
For $t$ close to 0, it is straightforward that
$$\mbox{Im}\,w(t,x), \, \mbox{Re}\,w(t,x) \in L^2_x.$$
In the rest part of this lemma, we want to show that
$$\mbox{Im}\,w(t,x), \, \mbox{Re}\,w(t,x) \in L^2_x \,\, \mbox{for all}\, t.$$

We first claim that $w(t,x)$ actually solves

\begin{equation}\label{eq65}
\begin{split}
(i\partial_t +B)w=&\lambda a^3 P_c \psi^3,\\
w(0,x)&=0.
\end{split}
\end{equation}
That is
\begin{equation*}
w(t,x)=-i\int_0^t e^{iB(t-s)}\lambda a^3(s)P_c \psi^3(x) ds. \quad 
\end{equation*}
We could verify this claim by rewriting the above expression and checking its real and imaginary parts:

\begin{equation*}
\begin{split}
w(t,x)=&-i\int_0^t e^{iB(t-s)}\lambda a^3(s)P_c \psi^3(x) ds\\
=&-i\int_0^t \sin B(t-s)\,\lambda a^3(s)P_c \psi^3(x) ds\\
&+\int_0^t \cos B(t-s)\, \lambda a^3(s)P_c\psi^3(x)ds.
\end{split}
\end{equation*}
Here,
\begin{equation}\label{Im}
\mbox{Im}\,w(t,x)=-\int_0^t \sin B(t-s)\,\lambda a^3(s)P_c \psi^3(x) ds,
\end{equation}
and
\begin{equation}\label{Re}
\mbox{Re}\,w(t,x)=\int_0^t \cos B(t-s)\,\lambda a^3(s)P_c \psi^3(x) ds.
\end{equation}
Therefore, we prove our claim: $\omega(t,x)$ is a solution to (\ref{eq65}).

Next we define
\begin{equation*}
l(t):=\|w(t,x)\|^2_{L^2_x}=\|\mbox{Re}\,w(t,x)\|^2_{L^2_x}+\|\mbox{Im}\,w(t,x)\|^2_{L^2_x}.
\end{equation*}
We then check $\f{d}{dt}l(t)$. Since $B$ is a self-adjoint operator, we have

\begin{equation}\label{dl}
\begin{split}
\f{d}{dt}l(t)=&\f{d}{dt}\int_{\mathbb R^3} w(t,x)\bar{w}(t,x) dx\\
=&\int_{\mathbb R^3} \f{d}{dt}w(t,x)\bar{w}(t,x) dx+\int_{\mathbb R^3} w(t,x)\f{d}{dt}\bar{w}(t,x) dx\\
=&\int_{\mathbb R^3} iB w(t,x)\bar{w}(t,x)-i\lambda a^3(t)P_c\psi^3(x)\bar{w}(t,x) dx\\
&+\int_{\mathbb R^3} w(t,x)\cdot -iB\bar{w}(t,x)+w(t,x)i\lambda a^3(t)P_c\psi^3(x) dx\\
=&i\int_{\mathbb R^3} \lambda a^3(t)P_c\psi^3(x) (w(t,x)-\bar{w}(t,x)) dx\\
=&-2\int_{\mathbb R^3} \mbox{Im}\,w(t,x)\lambda a^3(t)P_c\psi^3(x)dx.
\end{split}
\end{equation}
Using the definitions of $\mbox{Re}\, w(t,x), \, \mbox{Im}\, w(t,x)$ in (\ref{Im}) and (\ref{Re}), together with the dispersive estimates for $B$ (see Theorem 2.1 in 
\cite{SW}) and the fact $|a(t)|\leq 1/(1+t)^{\f14}$, we derive
\begin{equation*}
\begin{split}
&\|\mbox{Re}\, w(t,x), \, \mbox{Im}\, w(t,x)\|_{L^8_x}\\
\leq&  \int_0^t (1+t-s)^{-\f98}(1+s)^{-\f34}ds\\
\leq& \int_0^{t/2}(1+t-s)^{-\f98}(1+s)^{-\f34}ds+ \int_{t/2}^t (1+t-s)^{-\f98}(1+s)^{-\f34}ds\\
\leq& \f{1}{(1+t)^{\f34}}.
\end{split}
\end{equation*}
With the estimate above, we arrive at
\begin{equation*}
\begin{split} 
l(t)\leq& l(0)+\int_0^t |\f{d}{dt}l(t)|dt\\
\leq&\int_0^t \|\mbox{Im}\, w(t,x)\|_{L^8_x} |a^3(t)| \|P_c\psi^3(x)\|_{L^{\f87}_x} dt\\
\leq& \int_0^t \f{1}{(1+t)^{\f34}}\cdot\f{1}{(1+t)^{\f34}}<+\infty.
\end{split}
\end{equation*}
Therefore, we have proved 

\begin{equation}\label{eq613}
\mbox{Im}\,w(t,x)=\int_0^t \sin B(t-s)\,\lambda a^3(s)P_c \psi^3(x) ds \in L^2_x,
\end{equation}

\begin{equation}\label{eq614}
\mbox{Re}\,w(t,x)=\int_0^t \cos B(t-s)\,\lambda a^3(s)P_c \psi^3(x) ds \in L^2_x,
\end{equation}
for any $t>0$. \\

\end{proof}
We then proceed to prove the second lemma.
\begin{lemma}\label{lemma6.4}
For $a(t)$ and $\psi(t,x)$ as in Sections 1-5, we have

\begin{equation}\label{eq615}
\|\int_t^{+\infty} \sin B(t-s)\,\lambda a^3(s)P_c \psi^3(x) ds\|^2_{L^2_x}\rightarrow 0,
\end{equation}

\begin{equation}\label{eq616}
\|\int_t^{+\infty} \cos B(t-s)\,\lambda a^3(s)P_c \psi^3(x) ds\|^2_{L^2_x}\rightarrow 0, \quad \mbox{as} \quad t\rightarrow +\infty.
\end{equation}

\end{lemma}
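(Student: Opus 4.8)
The plan is to deduce Lemma \ref{lemma6.4} from a single bilinear $L^2_x$ estimate, replacing the $L^8_x$ dispersive rate used in Lemma \ref{lemma6.3} by the sharper $L^1_x\to L^\infty_x$ one. Set $g(s,x):=\lambda a^3(s)P_c\psi^3(x)$, which is real‑valued. Since for real $g$ one has $\cos B(t-s)\,g=\mbox{Re}\,\big(e^{iB(t-s)}g\big)$ and $\sin B(t-s)\,g=\mbox{Im}\,\big(e^{iB(t-s)}g\big)$, and since $\|\mbox{Re}\,F\|_{L^2_x}\le\|F\|_{L^2_x}$, $\|\mbox{Im}\,F\|_{L^2_x}\le\|F\|_{L^2_x}$, both (\ref{eq615}) and (\ref{eq616}) follow from
\begin{equation*}
\Big\|\int_t^{+\infty}e^{iB(t-s)}g(s,x)\,ds\Big\|^2_{L^2_x}\longrightarrow 0\qquad\mbox{as}\quad t\to+\infty,
\end{equation*}
where the oscillatory improper integral is understood as the $L^2_x$–limit of $\int_t^T$; its convergence will itself be a by‑product of the estimate below.

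The first step is to expand the square. For $T_2>T_1\ge t$, using self‑adjointness of $B$ and unitarity of $e^{iBt}$,
\begin{equation*}
\Big\|\int_{T_1}^{T_2}e^{iB(t-s)}g(s,x)\,ds\Big\|^2_{L^2_x}=\lambda^2\int_{T_1}^{T_2}\!\!\int_{T_1}^{T_2}a^3(s)\,a^3(s')\,<P_c\psi^3,\,e^{iB(s-s')}P_c\psi^3>\,ds\,ds'.
\end{equation*}
The gain is that we take the modulus squared of a \emph{single} complex integral, so no sum‑frequency $e^{iB(s+s')}$ term appears and only the difference $s-s'$ survives. Next I would invoke the dispersive/local‑decay estimates for $B$ (Theorem 2.1 in \cite{SW}): since $\psi$ decays exponentially, $(1+|x|)^{\sigma}P_c\psi^3(x)\in L^2_x$ for every $\sigma>0$, and hence
\begin{equation*}
\big|<P_c\psi^3,\,e^{iB\tau}P_c\psi^3>\big|\ \lesssim\ (1+|\tau|)^{-\frac32}.
\end{equation*}
Combining this with $|a(s)|^3\lesssim(1+s)^{-\frac34}$ from Theorem \ref{thm1.2},
\begin{equation*}
\Big\|\int_{T_1}^{T_2}e^{iB(t-s)}g(s,x)\,ds\Big\|^2_{L^2_x}\ \lesssim\ \int_{T_1}^{T_2}\!\!\int_{T_1}^{T_2}(1+s)^{-\frac34}(1+s')^{-\frac34}(1+|s-s'|)^{-\frac32}\,ds\,ds'.
\end{equation*}

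To close, I would estimate this double integral by symmetry in $(s,s')$ together with a splitting of the inner variable: the near‑diagonal region, where $(1+|s-s'|)^{-3/2}$ is integrable, contributes $\lesssim(1+s)^{-3/4}$, while the region where $s'$ is near the lower endpoint $T_1$ picks up an extra factor $(1+|s-T_1|)^{-1/2}$. Performing the remaining $s$–integration gives
\begin{equation*}
\Big\|\int_{T_1}^{T_2}e^{iB(t-s)}g(s,x)\,ds\Big\|^2_{L^2_x}\ \lesssim\ (1+T_1)^{-\frac12}
\end{equation*}
uniformly in $T_2$. Thus $\int_t^T e^{iB(t-s)}g(s,x)\,ds$ is Cauchy in $L^2_x$ as $T\to+\infty$, the improper integral exists, and choosing $T_1=t$, $T_2\to+\infty$ gives the decay to $0$; taking real and imaginary parts yields (\ref{eq615}) and (\ref{eq616}). (Incidentally, the identical computation with $e^{-iBs}$ in place of $e^{iB(t-s)}$ shows that $\int_0^{+\infty}\cos Bs\,g\,ds$, $\int_0^{+\infty}\sin Bs\,g\,ds$ and, since $B^{-1}$ is bounded on $L^2_x$, $\int_0^{+\infty}\frac{\sin Bs}{B}g\,ds$ all lie in $L^2_x$ with tails tending to $0$, which is exactly what Proposition \ref{6.1} requires.)

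The delicate point — and the reason Lemma \ref{lemma6.4} is genuinely harder than Lemma \ref{lemma6.3} — is that the energy identity of Lemma \ref{lemma6.3}, applied to the Duhamel problem $(i\partial_\tau+B)v=\lambda a^3P_c\psi^3$ started at time $t$ rather than $0$, fails to close: it only produces a bound of the shape $(1+t)^{-3/4}\int_t^{T_2}(1+s)^{-3/4}\,ds$, which \emph{grows} in $T_2$, because the $L^8_x$ dispersive rate $(1+|\tau|)^{-9/8}$ is too weak to overcome the non‑integrable decay $(1+s)^{-3/4}$ of $|a(s)|^3$ over $[t,+\infty)$. Passing to the bilinear $L^2_x$ estimate and exploiting the endpoint rate $(1+|\tau|)^{-3/2}$ is what fixes this; the inequality $\tfrac32>\tfrac54$ is precisely what makes the double integral both convergent and decaying in $t$.
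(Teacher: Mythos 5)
Your bilinear argument is a valid and genuinely different route from the paper's. The paper continues the auxiliary--function idea of Lemma \ref{lemma6.3} in a \emph{backward} form: it sets $\underline{w}(t,x)=-i\int_{+\infty}^t e^{iB(t-s)}\lambda a^3(s)P_c\psi^3(x)\,ds$, i.e.\ solves $(i\partial_t+B)\underline{w}=\lambda a^3 P_c\psi^3$ with $\underline{w}(+\infty,x)=0$, computes the energy identity $\frac{d}{dt}\|\underline{w}(t)\|_{L^2_x}^2=-2\int_{\mathbb R^3}\mbox{Im}\,\underline{w}\,\lambda a^3(t)P_c\psi^3\,dx$, bounds $\|\mbox{Im}\,\underline{w}(t)\|_{L^8_x}\lesssim(1+t)^{-3/4}$ from the $L^{8/7}_x\to L^8_x$ dispersive estimate, and integrates the identity from $t$ to $+\infty$ to obtain $\underline{l}(t)\lesssim\int_t^{+\infty}(1+s)^{-3/2}\,ds\to 0$. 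Your expansion of the square avoids the auxiliary function entirely and makes the Cauchy--in--$T$ property explicit, which is a genuine presentational advantage. One point of hygiene: the kernel bound $|\langle P_c\psi^3,e^{iB\tau}P_c\psi^3\rangle|\lesssim(1+|\tau|)^{-3/2}$ is a weighted--$L^2$ local--decay estimate, not the $L^{8/7}_x\to L^8_x$ statement of Theorem 2.1 in \cite{SW}; both are available for $B$ under hypotheses (V1)--(V3), but you should cite the local--decay result, not Theorem 2.1. In fact the weaker $L^{8/7}\to L^8$ rate already gives $|\langle P_c\psi^3,e^{iB\tau}P_c\psi^3\rangle|\lesssim(1+|\tau|)^{-9/8}$, and any kernel rate $(1+|\tau|)^{-\gamma}$ with $\gamma>1/2$ suffices to make your double integral converge and decay in $t$ (the diagonal region gives $(1+t)^{-1/2}$ and the off--diagonal region gives $(1+t)^{1/2-\gamma}$), so the endpoint $3/2$ is a convenience rather than a necessity.

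Your closing paragraph is, however, incorrect, and it mischaracterizes the paper's proof. You claim the energy identity applied to the Duhamel problem started at time $t$ ``fails to close'' because it gives $(1+t)^{-3/4}\int_t^{T_2}(1+s)^{-3/4}\,ds$, but that bound comes from over--crudely replacing the running--time factor by its value at $t$. The Duhamel estimate with the $L^{8/7}\to L^8$ rate actually gives, for $v(\tau)=-i\int_t^\tau e^{iB(\tau-s)}\lambda a^3 P_c\psi^3\,ds$,
$\|\mbox{Im}\,v(\tau)\|_{L^8_x}\lesssim\int_t^{\tau}(1+\tau-s)^{-9/8}(1+s)^{-3/4}\,ds\lesssim(1+\tau)^{-3/4}$, the decay being in $\tau$ and not merely in $t$. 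Hence $|\frac{d}{d\tau}\|v(\tau)\|_{L^2_x}^2|\lesssim(1+\tau)^{-3/2}$ and $\|v(T_2)\|_{L^2_x}^2\lesssim(1+t)^{-1/2}$ uniformly in $T_2$, so the method closes. This is exactly the estimate used in the paper's proof of Lemma \ref{lemma6.4} (phrased backward with $\underline{w}$), so the $L^8$ dispersive rate is perfectly adequate and Lemma \ref{lemma6.4} is not ``genuinely harder'' than Lemma \ref{lemma6.3} for the reason you state. Your bilinear argument and the paper's energy argument are two correct proofs with the same dispersive input; the supposed obstruction you describe does not exist.
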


\begin{proof}
Construct an auxiliary function $\underline{w}$ through solving the following equation backward

\begin{equation}\label{eq65}
\begin{split}
(i\partial_t +B)\wb=&\lambda a^3 P_c \psi^3,\\
\wb(+\infty,x)&=0.
\end{split}
\end{equation}
Hence
\begin{equation*}
\underline{w}(t,x)=-i\int_{+\infty}^t e^{iB(t-s)}\lambda a^3(s)P_c \psi^3(x) ds. 
\end{equation*}
We further rewrite (\ref{eq65}) into

\begin{equation*}
\begin{split}
\wb(t,x)=&-i\int_{+\infty}^t e^{iB(t-s)}\lambda a^3(s)P_c \psi^3(x) ds\\
=&-i\int_{+\infty}^t \sin B(t-s)\,\lambda a^3(s)P_c \psi^3(x) ds\\
&+\int_{+\infty}^t \cos B(t-s)\, \lambda a^3(s)P_c\psi^3(x)ds.
\end{split}
\end{equation*}
Thus,
\begin{equation}\label{Imwb}
\mbox{Im}\,\wb(t,x)=\int^{+\infty}_t \sin B(t-s)\,\lambda a^3(s)P_c \psi^3(x) ds,
\end{equation}
and
\begin{equation}\label{Rewb}
\mbox{Re}\,\wb(t,x)=-\int^{+\infty}_t \cos B(t-s)\,\lambda a^3(s)P_c \psi^3(x) ds.
\end{equation}
Define

\begin{equation*}
{\underline{l}}(t):=\|\wb(t,x)\|^2_{L^2_x}=\|\mbox{Re}\,\wb(t,x)\|^2_{L^2_x}+\|\mbox{Im}\,\wb(t,x)\|^2_{L^2_x}.
\end{equation*}
We then check $\f{d}{dt}\underline{l}(t)$. Since $B$ is a self-adjoint operator, in the same manner as for (\ref{dl}), we derive

\begin{equation*}
\begin{split}
\f{d}{dt}\underline{l}(t)=-2\int_{\mathbb R^3} \mbox{Im}\,\wb(t,x)\lambda a^3(t)P_c\psi^3(x)dx.
\end{split}
\end{equation*}
Using the definitions of $\mbox{Re}\, \wb(t,x), \, \mbox{Im}\, \wb(t,x)$ in (\ref{Imwb}) and (\ref{Rewb}), together with the dispersive estimates for $B$ (see Theorem 2.1 in 
\cite{SW}) and the fact $|a(t)|\leq 1/(1+t)^{\f14}$, we derive
\begin{equation*}
\begin{split}
&\|\mbox{Re}\, \wb(t,x), \, \mbox{Im}\, \wb(t,x)\|_{L^8_x}\\
\leq&  \int_t^{+\infty} (1+s-t)^{-\f98}(1+s)^{-\f34}ds\\
\leq& \f{1}{(1+t)^{\f34}}.
\end{split}
\end{equation*}
Therefore, for any $t \rightarrow +\infty$, we have 
\begin{equation*}
\begin{split} 
|\underline{l}(t)|\leq&\int_{t}^{+\infty} |\f{d}{ds}\underline{l}(s)|ds\\
\leq&\int_{t}^{+\infty} \|\mbox{Im}\, \wb(s,x)\|_{L^8_x} |a^3(s)| \|P_c\psi^3(x)\|_{L^{\f87}_x} ds\\
\leq& \int_{t}^{+\infty} \f{1}{(1+s)^{\f34}}\cdot\f{1}{(1+s)^{\f34}}\\
\rightarrow& 0. 
\end{split}
\end{equation*}
Thus, we conclude that 

\begin{equation}\label{eq615}
\|\mbox{Im}\, \wb(t,x)\|_{L^2_x}=\|\int_t^{+\infty} \sin B(t-s)\,\lambda a^3(s)P_c \psi^3(x) ds\|^2_{L^2_x}\rightarrow 0,
\end{equation}

\begin{equation}\label{eq616}
\|\mbox{Re}\, \wb(t,x)\|_{L^2_x}=\|\int_t^{+\infty} \cos B(t-s)\,\lambda a^3(s)P_c \psi^3(x) ds\|^2_{L^2_x}\rightarrow 0, \quad \mbox{as} \quad t\rightarrow +\infty.
\end{equation}
We have hence finished the proof Lemma \ref{lemma6.4}. \\

\end{proof} 

We then continue to prove Proposition \ref{6.1}. Recall that $B^2=-\Delta+V(x)+m^2$ has continuous spectrum $\sigma_{cont}(B^2)=[m^2, +\infty)$ and a unique strictly positive single eigenvalue $\Omega^2<m^2$. 
This implies $1/B$ is a bounded operator for $L^2_x$. Together with the conclusions in Lemma \ref{lemma6.3} and Lemma \ref{lemma6.4}, it follows

\begin{equation}\label{eq617}
\int_0^t \f{\sin B(t-s)}{B}\,\lambda a^3(s)P_c \psi^3(x) ds \in L^2_x,
\end{equation}

\begin{equation}\label{eq618}
\int_0^t \f{\cos B(t-s)}{B}\,\lambda a^3(s)P_c \psi^3(x) ds \in L^2_x,
\end{equation}
for any $t>0$. And

\begin{equation}\label{619}
\|\int_t^{+\infty} \f{\sin B(t-s)}{B}\,\lambda a^3(s)P_c \psi^3(x) ds\|^2_{L^2_x}\rightarrow 0,
\end{equation}

\begin{equation}\label{620}
\|\int_t^{+\infty} \f{\cos B(t-s)}{B}\,\lambda a^3(s)P_c \psi^3(x) ds\|^2_{L^2_x}\rightarrow 0, \quad \mbox{as} \quad t\rightarrow +\infty.
\end{equation}
From angle-difference identities, we have

\begin{equation}\label{621}
\begin{split}
&\int_0^t \sin B(t-s)\,\lambda a^3(s)P_c \psi^3(x) ds\\
=&\sin Bt\int_0^t \cos Bs\, \lambda a^3 P_c\psi^3 ds- \cos Bt\int_0^t \sin Bs\, \lambda a^3 P_c\psi^3 ds \\
\end{split}
\end{equation}

\begin{equation}\label{622}
\begin{split}
&\int_0^t \cos B(t-s)\,\lambda a^3(s)P_c \psi^3(x) ds\\
=&\sin Bt\int_0^t \sin Bs\, \lambda a^3 P_c\psi^3 ds+ \cos Bt\int_0^t \cos Bs\, \lambda a^3 P_c\psi^3 ds \\
\end{split}
\end{equation}
Together with (\ref{eq613}) and (\ref{eq614}), we obtain

\begin{equation}\label{eq623}
\begin{split}
\sin Bt\int_0^t \cos Bs\, \lambda a^3 P_c\psi^3 ds- \cos Bt\int_0^t \sin Bs\, \lambda a^3 P_c\psi^3 ds \in L^2_x,
\end{split}
\end{equation}
and
\begin{equation}\label{eq624}
\begin{split}
\sin Bt\int_0^t \sin Bs\, \lambda a^3 P_c\psi^3 ds+ \cos Bt\int_0^t \cos Bs\, \lambda a^3 P_c\psi^3 ds \in L^2_x.
\end{split}
\end{equation}
Since $\sin Bt, \cos Bt$ are bounded operators for $L^2_x$,
$$\sin Bt\times(\ref{eq623})+\cos Bt\times (\ref{eq624})\in L^2_x$$
and it gives
\begin{equation}\label{eq625}
\begin{split}
&\sin Bt\,\sin Bt\int_0^t \cos Bs\, \lambda a^3 P_c\psi^3 ds- \sin Bt\,\cos Bt\int_0^t \sin Bs\, \lambda a^3 P_c\psi^3 ds\\
&+\cos Bt\, \sin Bt\int_0^t \sin Bs\, \lambda a^3 P_c\psi^3 ds+ \cos Bt\,\cos Bt\int_0^t \cos Bs\, \lambda a^3 P_c\psi^3 ds\\
=&\int_0^t \cos Bs\, \lambda a^3(s)P_c \psi^3(x)ds \in L^2_x.
\end{split}
\end{equation}

In the same fashion, we have
$$-\cos Bt\times(\ref{eq623})+\sin Bt\times (\ref{eq624})\in L^2_x,$$
and it gives
$$\int_0^t \sin Bs\, \lambda a^3(s)P_c \psi^3(x)ds \in L^2_x.$$
Similarly, angle-difference identities and (\ref{eq617}), (\ref{eq618}) implies

\begin{equation}\label{eq626}
\begin{split}
\sin Bt\int_0^t \f{\cos Bs}{B}\, \lambda a^3 P_c\psi^3 ds- \cos Bt\int_0^t \f{\sin Bs}{B}\, \lambda a^3 P_c\psi^3 ds \in L^2_x,
\end{split}
\end{equation}
and
\begin{equation}\label{eq627}
\begin{split}
\sin Bt\int_0^t \f{\sin Bs}{B}\, \lambda a^3 P_c\psi^3 ds+ \cos Bt\int_0^t \f{\cos Bs}{B}\, \lambda a^3 P_c\psi^3 ds \in L^2_x.
\end{split}
\end{equation}
Thus, we conclude
$$\cos Bt\times(\ref{eq626})-\sin Bt\times (\ref{eq627})\in L^2_x,$$
and it is equivalent to

\begin{equation}\label{eq628}
\begin{split}
&\cos Bt\,\sin Bt\int_0^t \f{\cos Bs}{B}\, \lambda a^3 P_c\psi^3 ds- \cos Bt\,\cos Bt\int_0^t \f{\sin Bs}{B}\, \lambda a^3 P_c\psi^3 ds\\
&-\sin Bt\, \sin Bt\int_0^t \f{\sin Bs}{B}\, \lambda a^3 P_c\psi^3 ds- \sin Bt\,\cos Bt\int_0^t \f{\cos Bs}{B}\, \lambda a^3 P_c\psi^3 ds\\
=&-\int_0^t \f{\sin Bs}{B}\, \lambda a^3(s)P_c \psi^3(x)ds \in L^2_x.
\end{split}
\end{equation}
In the same manner, we deduce 
\begin{equation*}
\|\int_t^{+\infty} \cos Bs\, \lambda a^3(s)P_c \psi^3(x)ds\|_{L^2_x}\rightarrow 0,
\end{equation*}

\begin{equation*}
\|\int_t^{+\infty} \sin Bs\, \lambda a^3(s)P_c \psi^3(x)ds\|_{L^2_x}\rightarrow 0,
\end{equation*}

\begin{equation*}
\|\int_t^{+\infty} \f{\sin Bs}{B}\, \lambda a^3(s)P_c \psi^3(x)ds\|_{L^2_x}\rightarrow 0, \, \mbox{as} \, t\rightarrow +\infty.
\end{equation*}
Thus, we finish the proof for Proposition \ref{6.1}.

\subsection{The term $\eta_3(t,x)$} 

We start to estimate $\eta_3(t,x)$. Recall that give two real-valued functions $f(t,x)$ and $g(t,x)$, we denote $$<f,g>:=\int_{\mathbb{R}^3}f(t,x)g(t,x)dx.$$ 
For $\eta_3(t,x)$, we have the following equation 

\begin{equation*}
(\partial^2_t+B^2)\eta_3=\lambda P_c(3a^2\psi^2\eta+3a\psi\eta^2+\eta^3), \quad \eta_3(0,x)=0, \quad \partial_t \eta_3(0,x)=0.
\end{equation*}
This implies

\begin{equation}\label{eta36}
\eta_3(t,x)=\lambda\int_0^t \f{\sin B(t-s)}{B} P_c (3a^2\psi^2\eta+3a\psi\eta^2+\eta^3) ds.
\end{equation}
Here by definition
\begin{equation*}
\begin{split}
&P_c (3a^2\psi^2\eta+3a\psi\eta^2+\eta^3)\\
:=&3a^2\psi^2\eta+3a\psi\eta^2+\eta^3-<3a^2\psi^2\eta+3a\psi\eta^2+\eta^3,\psi>\psi.
\end{split}
\end{equation*}
As in Section 5, we construct an auxiliary function $v(t,x)$ through solving

\begin{equation*}
\begin{split}
(i\partial_t+B)v=&\lambda P_c(3a^2\psi^2\eta+3a\psi\eta^2+\eta^3)\\
v(0,x)=&0.
\end{split}
\end{equation*}
Hence it follows
\begin{equation*}
\begin{split}
v(t,x)=&-i\int_0^t e^{iB(t-s)}\lambda P_c(3a^2\psi^2\eta+3a\psi\eta^2+\eta^3)ds\\
=&-i\int_0^t \sin B(t-s) \lambda P_c(3a^2\psi^2\eta+3a\psi\eta^2+\eta^3)ds\\
&+\int_0^t \cos B(t-s) \lambda P_c(3a^2\psi^2\eta+3a\psi\eta^2+\eta^3)ds.
\end{split}
\end{equation*}
Thus,

\begin{equation}\label{Imv}
\mbox{Im}\, v(t,x)=\int_0^t \sin B(t-s) \lambda P_c(3a^2\psi^2\eta+3a\psi\eta^2+\eta^3)ds,
\end{equation}

\begin{equation}\label{Rev}
\mbox{Re}\, v(t,x)=\int_0^t \cos B(t-s) \lambda P_c(3a^2\psi^2\eta+3a\psi\eta^2+\eta^3)ds.
\end{equation}
Denote
$$\tilde{l}(t):=\|v(t,x)\|^2_{L^2_x}=\|\mbox{Im}\, v(t,x)\|^2_{L^2_x}+\|\mbox{Re}\, v(t,x)\|^2_{L^2_x}.$$

Since $B$ is self-adjoint, a calculation similar to (\ref{dl}) implies
$$\f{d}{dt}\tilde{l}(t)=-2\int_{\mathbb{R}^3}\mbox{Im}\, v(t,x)\lambda P_c(3a^2\psi^2\eta+3a\psi\eta^2+\eta^3)dx.$$
We first prove a proposition for $\tilde{l}(t)$

\begin{proposition}\label{63}
The function $\tilde{l}(t)$ is uniformly bounded for all $t>0$ and has a limit $\tilde{l}(+\infty)$.
\end{proposition}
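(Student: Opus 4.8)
The plan is to show that $\frac{d}{dt}\tilde{l}(t)$ is absolutely integrable on $(0,+\infty)$. Since $\tilde{l}(0)=0$, this gives simultaneously the uniform bound $\tilde{l}(t)=\int_0^t\frac{d}{ds}\tilde{l}(s)\,ds\le\int_0^{+\infty}\big|\frac{d}{ds}\tilde{l}(s)\big|\,ds$ and the existence of $\tilde{l}(+\infty):=\int_0^{+\infty}\frac{d}{ds}\tilde{l}(s)\,ds$. To keep the estimate of $\frac{d}{dt}\tilde{l}$ self-contained I would run a short continuity argument with bootstrap hypothesis $\tilde{l}(t)\le\delta_0$, which is legitimate since $\tilde{l}$ is continuous and $\tilde{l}(0)=0$; its seed is the smallness of $\rho(0)$ and of the data.

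From the displayed identity for $\frac{d}{dt}\tilde{l}(t)$ just above, H\"older's inequality gives
\begin{equation*}
\Big|\frac{d}{dt}\tilde{l}(t)\Big|\le 2|\lambda|\,\|\mathrm{Im}\,v(t,x)\|_{L^8_x}\,\big\|P_c\big(3a^2\psi^2\eta+3a\psi\eta^2+\eta^3\big)(t,x)\big\|_{L^{\frac87}_x}.
\end{equation*}
The two $\psi$-localized contributions $3a^2\psi^2\eta$ and $3a\psi\eta^2$ are harmless: using $|a(t)|\lesssim\delta_0(1+t)^{-\frac14}$ (Theorem \ref{thm1.2}), $\|\eta(t,x)\|_{L^8_x}\lesssim(1+t)^{-\frac34}$ (Theorem \ref{thm1.2}), the Schwartz decay of $\psi$, and H\"older in $x$, they are bounded by $\lesssim\delta_0(1+t)^{-\frac54}$. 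The cubic term $\eta^3$ carries no weight, and this is exactly where the $L^2_x$-information must enter: one interpolates $\|\eta\|_{L^{\frac{24}{7}}_x}\lesssim\|\eta\|_{L^2_x}^{\frac49}\|\eta\|_{L^8_x}^{\frac59}$, and for $\|\eta\|_{L^2_x}$ writes $\eta=\eta_1+\eta_2+\eta_3$ with $\|\eta_1\|_{L^2_x}\lesssim\delta_0$ (free wave, small data), $\|\eta_2\|_{L^2_x}\lesssim\delta_0$ (Lemma \ref{lemma6.3} and the bound on $l(t)$), and
\begin{equation*}
\|\eta_3(t,x)\|_{L^2_x}=\big\|B^{-1}\,\mathrm{Im}\,v(t,x)\big\|_{L^2_x}\lesssim\|v(t,x)\|_{L^2_x}=\tilde{l}(t)^{\frac12}\le\delta_0^{\frac12}
\end{equation*}
under the bootstrap, using that $B^{-1}P_c$ is bounded on $L^2_x$. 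Hence $\|\eta(t,x)\|_{L^2_x}\lesssim\delta_0^{\frac12}$, so $\|\eta^3\|_{L^{\frac87}_x}=\|\eta\|_{L^{\frac{24}{7}}_x}^3\lesssim\delta_0^{\frac23}(1+t)^{-\frac54}$, and the $P_c$-correction $\langle\,\cdot\,,\psi\rangle\psi$ obeys an even better bound. Altogether,
\begin{equation*}
\big\|P_c\big(3a^2\psi^2\eta+3a\psi\eta^2+\eta^3\big)(t,x)\big\|_{L^{\frac87}_x}\lesssim\delta_0^{\frac23}(1+t)^{-\frac54}.
\end{equation*}

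Next, the Duhamel formula for $v$ together with the dispersive estimates for $B$ (Theorem 2.1 of \cite{SW}), used exactly as in the proof of Lemma \ref{lemma6.3}, give
\begin{equation*}
\|\mathrm{Im}\,v(t,x)\|_{L^8_x}\lesssim\int_0^t(1+t-s)^{-\frac98}\big\|P_c\big(3a^2\psi^2\eta+3a\psi\eta^2+\eta^3\big)(s,x)\big\|_{L^{\frac87}_x}\,ds\lesssim\delta_0^{\frac23}(1+t)^{-\frac98},
\end{equation*}
on splitting the $s$-integral at $t/2$. Combining the last three displays,
\begin{equation*}
\Big|\frac{d}{dt}\tilde{l}(t)\Big|\lesssim\delta_0^{\frac43}(1+t)^{-\frac{19}{8}},
\end{equation*}
which is integrable on $(0,+\infty)$. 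Integrating and using $\tilde{l}(0)=0$ gives $\tilde{l}(t)\lesssim\delta_0^{\frac43}$, which is strictly smaller than $\delta_0$ once $\delta_0$ is small; hence the bootstrap closes and $\tilde{l}(t)\le\delta_0$ for all $t\ge0$. In particular $\frac{d}{dt}\tilde{l}\in L^1(0,+\infty)$, so $\tilde{l}$ is uniformly bounded and $\tilde{l}(t)\to\tilde{l}(+\infty)$ as $t\to+\infty$.

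The main obstacle is the unweighted cubic term $\eta^3$: unlike the $\psi$-localized pieces it cannot be controlled by the $L^8_x$-decay of $\eta$ alone, and it forces one to use $\|\eta\|_{L^2_x}$, which through the identity $\eta_3=B^{-1}\mathrm{Im}\,v$ is precisely the quantity being estimated. The bootstrap breaks this circularity, and the delicate point is to choose the interpolation exponent (here $L^{\frac{24}{7}}_x=[L^2_x,L^8_x]_{\frac49}$) so that the resulting time decay $(1+t)^{-\frac54}$, hence $(1+t)^{-\frac{19}{8}}$ after convolution with the dispersive kernel, is integrable, while every factor still carries a positive power of $\delta_0$ so that the continuity argument closes with room to spare.
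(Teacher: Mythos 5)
Your proposal exploits the same key mechanism as the paper — the identity $\eta_3 = B^{-1}\mathrm{Im}\,v$ to turn the a priori unbounded quantity $\|\eta_3\|_{L^2_x}$ into a function of $\tilde{l}(t)^{1/2}$, thereby breaking the circularity — and it packages the closure as a continuity/bootstrap on $\tilde{l}(t)\leq\delta_0$, which is legitimate and in some ways cleaner than the paper's route of introducing $G:=\sup_{t>0}\|\eta_3\|_{L^2_x}$ and deriving the nonlinear inequality $G^2\lesssim\delta_0+\delta_0^2 G^{1/3}+\delta_0^4 G^{2/3}+\dots$. You also integrate $|\tilde{l}'|$ to get the limit directly, whereas the paper re-runs the argument from a large $t_0$ to show $\tilde{l}$ is Cauchy. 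So far so good.

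There is, however, a genuine gap in the central decay estimate. You bound $\frac{d}{dt}\tilde l$ by the H\"older pairing $\|\mathrm{Im}\,v\|_{L^8_x}\|P_c(\cdots)\|_{L^{8/7}_x}$ and then claim, ``exactly as in Lemma \ref{lemma6.3},'' that the $L^{8/7}_x\to L^8_x$ dispersive estimate applied to the Duhamel formula for $v$ gives $\|\mathrm{Im}\,v\|_{L^8_x}\lesssim(1+t)^{-9/8}$. But the Klein--Gordon dispersive estimate (Theorem 2.1 of \cite{SW}) carries a Sobolev derivative loss on the input — this is intrinsic to $e^{iBt}$, which behaves like a wave propagator at high frequencies — so the bound actually reads $\|\sin B(t-s)P_c g\|_{L^8_x}\lesssim(1+t-s)^{-9/8}\|g\|_{W^{\sigma,8/7}_x}$ with $\sigma>0$. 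In Lemma \ref{lemma6.3} this is harmless because the spatial profile of the forcing is $P_c\psi^3$, a fixed Schwartz function whose $W^{\sigma,8/7}_x$ norm is absorbed into the constant. Here the forcing is $P_c(3a^2\psi^2\eta+3a\psi\eta^2+\eta^3)$, whose spatial profile involves $\eta$; you control $\|\eta^3\|_{L^{8/7}_x}$ via interpolation, but not $\|\eta^3\|_{W^{\sigma,8/7}_x}$, and the latter is what the dispersive estimate requires. So the step $\|\mathrm{Im}\,v\|_{L^8_x}\lesssim\delta_0^{2/3}(1+t)^{-9/8}$ is unjustified. The paper sidesteps this entirely: it never applies the dispersive estimate to $v$, but instead bounds $\|\mathrm{Im}\,v(t,\cdot)\|_{L^2_x}\leq\int_0^t\|\lambda P_c(\cdots)(s,\cdot)\|_{L^2_x}\,ds$ by Minkowski and the $L^2_x$-boundedness of $\sin B(\cdot)$, and pairs with $\|\eta^3\|_{L^2_x}\lesssim\|\eta\|_{L^2_x}^{1/3}\|\eta\|_{L^8_x}^{8/3}\lesssim(1+t)^{-2}$, which already provides integrability in $t$. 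If you replace your $L^8_x$ bound on $\mathrm{Im}\,v$ by this uniform $L^2_x$ Minkowski bound and pair in $L^2_x\times L^2_x$, your bootstrap closes and the rest of your argument stands.
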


\begin{proof}
For $\tilde{l}(t)$, we have

\begin{equation*}
\begin{split}
\tilde{l}(t)-\tilde{l}(t_0)=&-2\int_{t_0}^t \int_{\mathbb{R}^3}\mbox{Im}\, v(t',x)\lambda P_c(3a^2\psi^2\eta+3a\psi\eta^2+\eta^3)dxdt'\\
=&-2\int_{t_0}^t \int_{\mathbb{R}^3}\mbox{Im}\, v(t',x)\lambda (3a^2\psi^2\eta+3a\psi\eta^2+\eta^3)dxdt'\\
&+2\int_{t_0}^t \int_{\mathbb{R}^3}\mbox{Im}\, v(t',x)\lambda<3a^2\psi^2\eta+3a\psi\eta^2+\eta^3,\psi>\psi dxdt'.\\
\end{split}
\end{equation*}
We first study the contribution from $\int_{t_0}^t \int_{\mathbb{R}^3}\mbox{Im}\, v(t',x) \eta^3(t',x) dxdt'$. 
Recall that from Proposition 7.6 in \cite{SW}, $\eta$ satisfies
$$\|\eta(t,x)\|_{L^8_x}\leq \f{\d_0}{(1+t)^{\f34}}.$$ 
Let $t_0$ be any fixed number between $0$ and $t$, we derive

\begin{equation}\label{eq638}
\begin{split}
&|\int_{t_0}^t \int_{\mathbb{R}^3}\mbox{Im}\, v(t',x) \eta^3(t',x) dxdt'|\\
=& |\int_{t_0}^t \int_{\mathbb{R}^3}\l \int_0^{t'}\sin B(t'-s) \lambda P_c(3a^2\psi^2\eta+3a\psi\eta^2+\eta^3)(s,x)ds \rr\\
&\times \eta^3(t',x) dxdt'|\\
\leq& \int_{t_0}^t \| \l \int_0^{t'}\sin B(t'-s) \lambda P_c(3a^2\psi^2\eta+3a\psi\eta^2+\eta^3)(s,x)ds \rr \|_{L^2_x}\\
&\times \|\eta^3(t',x)\|_{L^2_x} dt'\\
\leq& \int_{t_0}^t   \int_0^{t'} \| \lambda P_c(3a^2\psi^2\eta+3a\psi\eta^2+\eta^3)(s,x)  \|_{L^2_x}ds \|\eta(t',x)\|_{L^4_x}\|\eta(t',x)\|^2_{L^8_x}dt'\\
\leq& \int_{t_0}^t   \int_0^{t'} \l\f{1}{(1+s)^{\f54}}+\|\eta^3(s,x)\|_{L^2_x}\rr ds \|\eta(t',x)\|_{L^4_x}\|\eta(t',x)\|^2_{L^8_x}dt'\\
\leq& \int_{t_0}^t   \int_0^{t'} \l\f{1}{(1+s)^{\f54}}+\|\eta(s,x)\|_{L^4_x}\|\eta(s,x)\|^2_{L^8_x}\rr ds\\
&\times \|\eta(t',x)\|_{L^4_x}\|\eta(t',x)\|^2_{L^8_x}dt'\\
\leq& \int_{t_0}^t   \int_0^{t'} \f{1}{(1+s)^{\f54}}ds\|\eta(t',x)\|^{\f13}_{L^2_x}\|\eta(t',x)\|^{\f23+2}_{L^8_x}dt'\\
&+ \int_{t_0}^t   \int_0^{t'} \|\eta(s,x)\|^{\f13}_{L^2_x}\|\eta(s,x)\|^{\f23+2}_{L^8_x} ds\|\eta(t',x)\|^{\f13}_{L^2_x}\|\eta(t',x)\|^{\f23+2}_{L^8_x}dt'.
\end{split}
\end{equation}
For the last inequality, we employ
$$\|\eta(t,x)\|_{L^4_x}\leq \|\eta(t,x)\|^{\f13}_{L^2_x}\|\eta(t,x)\|^{\f23}_{L^8_x}.$$
As a consequence of previous subsection, there exists a uniform $C$ such that
$$\|\eta_1(t,x)\|_{L^2_x}+\|\eta_2(t,x)\|_{L^2_x}\leq C.$$
We define $G$ through
\begin{equation*}
G:=\sup_{t>0}\|\eta_3(t,x)\|_{L^2_x}.
\end{equation*}
Together with (\ref{eq638}), we deduce that
\begin{equation*}
\begin{split}
&|\int_{t_0}^t \int_{\mathbb{R}^3}\mbox{Im}\, v(t',x) \eta^3(t',x) dxdt'|\\
\leq& \int_{t_0}^t   \int_0^{t'} \f{1}{(1+s)^{\f54}}ds\|\eta(t',x)\|^{\f13}_{L^2_x}\|\eta(t',x)\|^{\f23+2}_{L^8_x}dt'\\
&+ \int_{t_0}^t   \int_0^{t'} \|\eta(s,x)\|^{\f13}_{L^2_x}\|\eta(s,x)\|^{\f23+2}_{L^8_x} ds\|\eta(t',x)\|^{\f13}_{L^2_x}\|\eta(t',x)\|^{\f23+2}_{L^8_x}dt'\\
\leq&\delta_0^2\int_{t_0}^t \|\eta(t',x)\|^{\f13}_{L^2_x}\cdot \f{1}{(1+t')^2}dt'\\
&+\delta_0^4\int_{t_0}^t \int_0^{t'} \|\eta(s,x)\|^{\f13}_{L^2_x}\cdot \f{1}{(1+s)^2}ds\, \|\eta(t',x)\|^{\f13}_{L^2_x}\cdot \f{1}{(1+t')^2}\\
\leq&\f{\delta_0^2 G^{\f13}}{1+t_0}+\f{\delta_0^4 G^{\f23}}{1+t_0}+\f{\d^2_0 C^{\f13}}{1+t_0}+\f{\delta^4_0 C^{\f23}}{1+t_0}.
\end{split}
\end{equation*}

In the same fashion, for other terms we have
\begin{equation*}
\begin{split}
&|\int_{t_0}^t \int_{\mathbb{R}^3}\mbox{Im}\, v(t',x)\l-\lambda(3a^2\psi^2\eta+3a\psi\eta^2)\\
&\quad \quad \quad \quad \quad \quad \quad \quad \quad +\lambda <3a^2\psi^2\eta+3a\psi\eta^2, \psi> \psi\rr (t',x)\,dxdt'|\\
\leq& |\int_{t_0}^t \int_{\mathbb{R}^3}\l\int_0^{t'}\sin B(t'-s)\lambda P_c(3a^2\psi^2\eta+3a\psi\eta^2+\eta^3)(s,x)ds \rr\\
&\times \l\lambda(3a^2\psi^2\eta+3a\psi\eta^2)+\lambda <3a^2\psi^2\eta+3a\psi\eta^2, \psi> \psi\rr (t',x)dxdt'|\\
\leq& \int_{t_0}^t\int_0^{t'}(\f{1}{(1+s)^{\f54}}+\|\eta^3(s,x)\|_{L^2_x})ds \cdot \f{\delta_0}{(1+t')^{\f54}}dt'\\
\leq& \f{\delta_0}{(1+t_0)^{\f14}}+\int_{t_0}^t\int_0^{t'} \|\eta(s,x)\|^{\f13}_{L^2_x} \|\eta(s,x)\|^{\f23+2}_{L^8_x} ds\, \f{\delta_0}{(1+t')^{\f54}}dt'\\
\leq& \f{\delta_0}{(1+t_0)^{\f14}}+\int_{t_0}^t\int_0^{t'} \|\eta(s,x)\|^{\f13}_{L^2_x}\f{\delta_0^2}{(1+s)^2} ds\, \f{\delta_0}{(1+t')^{\f54}}dt'\\
\leq& \f{\delta_0}{(1+t_0)^{\f14}}+\f{\delta_0^3 G^{\f13}}{(1+t_0)^{\f14}}+\f{\delta^3_0 C^{\f13}}{(1+t_0)^{\f14}}.
\end{split}
\end{equation*}

Putting all the estimates together, for $t\geq t_0$, furthermore, we have also showed that

\begin{equation*}
\begin{split}
&\|\mbox{Im}\,v(t,x)\|^2_{L^2_x}+\|\mbox{Re}\,v(t,x)\|^2_{L^2_x}\\
=&\|v(t,x)\|^2_{L^2_x}\\
\leq&\f{\delta_0^2 G^{\f13}}{1+t_0}+\f{\delta_0^4 G^{\f23}}{1+t_0}+\f{\delta_0}{(1+t_0)^{\f14}}+\f{\delta_0^3 G^{\f13}}{(1+t_0)^{\f14}}\\
&+\f{\delta^2_0 C^{\f13}}{1+t_0}+\f{\delta^4_0 C^{\f23}}{1+t_0}+\f{\delta^3_0 C^{\f13}}{(1+t_0)^{\f14}}. 
\end{split}
\end{equation*}
Recall the definitions from (\ref{Imv}) and (\ref{eta3}):

\begin{equation*}
\mbox{Im}\, v(t,x)=\int_0^t \sin B(t-s) \lambda P_c(3a^2\psi^2\eta+3a\psi\eta^2+\eta^3)ds,
\end{equation*}
and
\begin{equation*}
\eta_3(t,x)=\int_0^t \f{\sin B(t-s)}{B} \lambda P_c(3a^2\psi^2\eta+3a\psi\eta^2+\eta^3)ds.
\end{equation*}
We first pick up $t_0=0$ (initial time to be $0$) and deduce

\begin{equation*}
\begin{split}
&\|\eta_3(t,x)\|^2_{L^2_x}\\
\leq&\|\mbox{Im}\,v(t,x)\|^2_{L^2_x}\leq\|v(t,x)\|^2_{L^2_x}\\
\leq&{\delta_0^2 G^{\f13}}+{\delta_0^4 G^{\f23}}+{\delta_0}+{\delta_0^3 G^{\f13}}+\d^2_0 C^{\f13}+\d^4_0 C^{\f23}+\d_0^3 C^{\f13}.
\end{split}
\end{equation*}
Since $G$ is defined as $\sup_{t>0} \|\eta_3(t,x)\|_{L^2_x}$, we arrive at

$$G^2\leq{\delta_0^2 G^{\f13}}+{\delta_0^4 G^{\f23}}+{\delta_0}+{\delta_0^3 G^{\f13}}+\d^2_0 C^{\f13}+\d^4_0 C^{\f23}+\d_0^3 C^{\f13}.$$
This implies

$$G:=\sup_{t>0} \|\eta_3(t,x)\|_{L^2_x}\leq \d_0^{\f12}+\d_0 C^{\f16}.$$
If our initial time starts from an arbitrary large $t_0>0$, with the same argument we derive

$$\sup_{t>t_0} \|\eta_3(t,x)\|_{L^2_x}\leq \f{\d_0^{\f12}+\d_0 C^{\f16}}{(1+t_0)^{\f18}},$$
which goes to $0$ as $t_0$ goes to $+\infty$. Thus, let $t_0=0$, we have showed
$$\tilde{l}(t)\quad \mbox{is uniformly bounded for any}\quad t>0.$$
Set $t_0>0$ to be a large number, we have also proved
$$\tilde{l}(t)-\tilde{l}(t_0)\rightarrow 0 \quad \mbox{for any} \quad t\geq t_0 \quad \mbox{and} \quad t_0\rightarrow +\infty.$$
This finishes the proof for Proposition \ref{63}.

\end{proof}

Now we are ready to prove

\begin{proposition}\label{6.4}
For $a(t)$, $\psi(t,x)$ and $\eta(t,x)$ as in Sections 1-5, we have
$$\int_0^{+\infty} \cos Bs \,\lambda P_c(3a^2\psi^2\eta+3a\psi\eta^2+\eta^3)(s,x) ds \in L^2_x,$$
$$\int_0^{+\infty} \sin Bs \,\lambda P_c(3a^2\psi^2\eta+3a\psi\eta^2+\eta^3)(s,x) ds \in L^2_x,$$
$$\int_0^{+\infty} \f{\sin Bs}{B} \,\lambda P_c(3a^2\psi^2\eta+3a\psi\eta^2+\eta^3)(s,x) ds \in L^2_x,$$
and
$$\|\int_t^{+\infty} \cos Bs \,\lambda P_c(3a^2\psi^2\eta+3a\psi\eta^2+\eta^3) ds\|_{L^2_x}\rightarrow 0,$$
$$\|\int_t^{+\infty} \sin Bs \,\lambda P_c(3a^2\psi^2\eta+3a\psi\eta^2+\eta^3) ds\|_{L^2_x}\rightarrow 0,$$
$$\|\int_t^{+\infty} \f{\sin Bs}{B} \,\lambda P_c(3a^2\psi^2\eta+3a\psi\eta^2+\eta^3) ds\|_{L^2_x}\rightarrow 0,$$
as $t\rightarrow +\infty$.

\end{proposition}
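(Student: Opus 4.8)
The plan is to reproduce, for $\eta_3$, the two-step structure used for $\eta_2$ in Proposition \ref{6.1}: first establish the statements with $\int_0^t\sin B(t-s)$, $\int_0^t\cos B(t-s)$ and their tails by means of the auxiliary function $v(t,x)$ together with a backward analogue, and then pass to the $\int_0^{+\infty}\cos Bs$, $\int_0^{+\infty}\sin Bs$, $\int_0^{+\infty}\frac{\sin Bs}{B}$ statements through the angle-difference identities, exactly as in \eqref{621}--\eqref{eq628}.

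First, for the \emph{forward} half: by Proposition \ref{63} the quantity $\tilde l(t)=\|v(t,x)\|^2_{L^2_x}$ is uniformly bounded in $t$, and by \eqref{Imv}, \eqref{Rev} its imaginary and real parts are precisely $\int_0^t\sin B(t-s)\lambda P_c(3a^2\psi^2\eta+3a\psi\eta^2+\eta^3)\,ds$ and $\int_0^t\cos B(t-s)\lambda P_c(3a^2\psi^2\eta+3a\psi\eta^2+\eta^3)\,ds$; hence both lie in $L^2_x$ for every $t>0$, with $L^2_x$ norm bounded uniformly in $t$. Since $\sigma(B^2)=\{\Omega^2\}\cup[m^2,+\infty)$ stays away from $0$, the operator $B^{-1}$ is bounded on $L^2_x$, so also $\int_0^t\frac{\sin B(t-s)}{B}\lambda P_c(3a^2\psi^2\eta+3a\psi\eta^2+\eta^3)\,ds\in L^2_x$.

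Next, for the \emph{backward} half I would, as in the proof of Lemma \ref{lemma6.4}, solve $(i\partial_t+B)\underline v=\lambda P_c(3a^2\psi^2\eta+3a\psi\eta^2+\eta^3)$ with $\underline v(+\infty,x)=0$, so that $\underline v(t,x)=i\int_t^{+\infty}e^{iB(t-s)}\lambda P_c(3a^2\psi^2\eta+3a\psi\eta^2+\eta^3)(s,x)\,ds$ and $\mbox{Im}\,\underline v$, $\mbox{Re}\,\underline v$ are $\int_t^{+\infty}\sin B(t-s)\lambda P_c(\cdots)\,ds$ and $-\int_t^{+\infty}\cos B(t-s)\lambda P_c(\cdots)\,ds$. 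With $\underline{\tilde l}(t)=\|\underline v(t,x)\|^2_{L^2_x}$, the self-adjointness computation of \eqref{dl} yields $\frac{d}{dt}\underline{\tilde l}(t)=-2\int_{\mathbb R^3}\mbox{Im}\,\underline v(t,x)\,\lambda P_c(3a^2\psi^2\eta+3a\psi\eta^2+\eta^3)(t,x)\,dx$, hence $\underline{\tilde l}(t)=-\int_t^{+\infty}\frac{d}{ds}\underline{\tilde l}(s)\,ds$; the right-hand side is bounded by exactly the chain of estimates already established in the proof of Proposition \ref{63} — the $3a^2\psi^2\eta+3a\psi\eta^2$ part controlled by $|a(s)|^2\|\eta(s,x)\|_{L^8_x}\lesssim(1+s)^{-5/4}$, and the $\eta^3$ part by $\|\eta\|_{L^4_x}\le\|\eta\|_{L^2_x}^{1/3}\|\eta\|_{L^8_x}^{2/3}$ together with $\|\eta_1\|_{L^2_x}+\|\eta_2\|_{L^2_x}\le C$ and the finite quantity $G=\sup_t\|\eta_3(t,x)\|_{L^2_x}$ — except that all lower limits of integration are now $t$ instead of $0$, which forces $\underline{\tilde l}(t)\to0$ as $t\to+\infty$. (Alternatively, since $e^{iBt}$ is unitary on $L^2_x$, one can identify $\|\underline v(t_0,x)\|_{L^2_x}=\lim_{t\to+\infty}\|\int_{t_0}^t e^{iB(t-s)}\lambda P_c(\cdots)\,ds\|_{L^2_x}$ and invoke the $t_0$-decaying bound obtained at the end of the proof of Proposition \ref{63}.) Either way, $\|\int_t^{+\infty}\sin B(t-s)\lambda P_c(\cdots)\,ds\|_{L^2_x}\to0$ and $\|\int_t^{+\infty}\cos B(t-s)\lambda P_c(\cdots)\,ds\|_{L^2_x}\to0$, and applying $B^{-1}$ also $\|\int_t^{+\infty}\frac{\sin B(t-s)}{B}\lambda P_c(\cdots)\,ds\|_{L^2_x}\to0$.

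Finally I would combine the two halves with $\sin B(t-s)=\sin Bt\cos Bs-\cos Bt\sin Bs$, $\cos B(t-s)=\sin Bt\sin Bs+\cos Bt\cos Bs$ and the boundedness of $\sin Bt,\cos Bt$ on $L^2_x$, exactly as in \eqref{621}--\eqref{eq628}: appropriate linear combinations of the $\sin B(t-s)$ and $\cos B(t-s)$ statements isolate $\int_0^t\cos Bs\,\lambda P_c(\cdots)\,ds$, $\int_0^t\sin Bs\,\lambda P_c(\cdots)\,ds$, $\int_0^t\frac{\sin Bs}{B}\lambda P_c(\cdots)\,ds$ as $L^2_x$ functions and isolate the matching tails $\int_t^{+\infty}(\cdots)\,ds$ as $L^2_x$ functions with norms tending to $0$; since the tails vanish, the improper integrals $\int_0^{+\infty}(\cdots)\,ds$ converge in $L^2_x$. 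I expect the only real obstacle to be the backward energy estimate, and within it the $\eta^3$ self-interaction, which is merely borderline integrable and forces the $L^2_x$/$L^8_x$ interpolation and the a priori bound $G<+\infty$ from Proposition \ref{63}; everything else is either already available there or a direct transcription of the $\eta_2$ argument.
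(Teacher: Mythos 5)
Your proposal is correct and follows the same route the paper intends: the paper's proof of Proposition~\ref{6.4} is the single sentence ``Together with the conclusion in Proposition~\ref{63}, these can be proved by similar arguments as for Proposition~\ref{6.1},'' and your filling-in---forward boundedness from $\tilde l(t)$, a backward auxiliary function $\underline v$ with data at $+\infty$ whose energy vanishes because the integrand $\|\lambda P_c(3a^2\psi^2\eta+3a\psi\eta^2+\eta^3)\|_{L^2_x}\lesssim(1+s)^{-5/4}$ is integrable and $G<\infty$ is now available, then the angle-difference identities---is exactly that transcription. The observation that the backward estimate is, if anything, easier than the forward one (tails $\int_t^{+\infty}$ decay) and that the only delicate ingredient is the a priori $G<\infty$ from Proposition~\ref{63} is accurate.
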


\begin{proof}
Together with the conclusion in Proposition \ref{63}, these can be proved by similar arguments as for Proposition \ref{6.1}. 
\end{proof}

Since $\sin Bt / B$ and $\cos Bt$ are bounded operators for $L^2_x$, Proposition \ref{6.4} implies
\begin{equation}\label{new 6.65}
\begin{split}
\|-\lambda \f{\sin Bt}{B}\int_t^{+\infty} \cos Bs P_c(3a^2\psi^2\eta+3a\psi\eta^2+\eta^3)(s,x) ds\|_{L^2_x}&\rightarrow 0,\\
\|\lambda \cos Bt \int_t^{+\infty} \f{\sin Bs}{B} P_c(3a^2\psi^2\eta+3a\psi\eta^2+\eta^3)(s,x) ds\|_{L^2_x}&\rightarrow 0,
\end{split}
\end{equation}
as $t\rightarrow +\infty$. Furthermore, from Proposition \ref{6.4} we also have as $t\rightarrow +\infty$
\begin{equation}\label{new 6.66}
\begin{split}
&\|B\cdot \l -\lambda \f{\sin Bt}{B}\int_t^{+\infty} \cos Bs P_c(3a^2\psi^2\eta+3a\psi\eta^2+\eta^3)(s,x) ds \rr\|_{L^2_x}\\
=&\|-\lambda \sin Bt \int_t^{+\infty} \cos Bs P_c(3a^2\psi^2\eta+3a\psi\eta^2+\eta^3)(s,x) ds \|_{L^2_x}\rightarrow 0,\\
\end{split}
\end{equation}

\begin{equation}\label{new 6.67}
\begin{split}
&\|B\cdot \l \lambda \cos Bt \int_t^{+\infty} \f{\sin Bs}{B} P_c(3a^2\psi^2\eta+3a\psi\eta^2+\eta^3)(s,x) ds \rr\|_{L^2_x}\\
=& \| \lambda \cos Bt \int_t^{+\infty} \sin Bs P_c(3a^2\psi^2\eta+3a\psi\eta^2+\eta^3)(s,x) ds \|_{L^2_x}\rightarrow 0.
\end{split}
\end{equation}
Recall (\ref{B lemma}): for any $f\in H^1_x$, we have
$$\|f\|^2_{\dot{H}^!_x}\leq \|Bf\|^2_{L^2_x}+C\|f\|^2_{L^2_x}.$$ 
Hence, (\ref{new 6.65}), (\ref{new 6.66}) and (\ref{new 6.67}) imply

\begin{equation*}
\begin{split}
\|-\lambda \f{\sin Bt}{B}\int_t^{+\infty} \cos Bs P_c(3a^2\psi^2\eta+3a\psi\eta^2+\eta^3)(s,x) ds\|_{H^1_x}&\rightarrow 0,\\
\|\lambda \cos Bt \int_t^{+\infty} \f{\sin Bs}{B} P_c(3a^2\psi^2\eta+3a\psi\eta^2+\eta^3)(s,x) ds\|_{H^1_x}&\rightarrow 0,
\end{split}
\end{equation*}
as $t\rightarrow +\infty$.
Therefore, we arrive at
\begin{proposition}\label{6.5}
For $\eta_3$ is define through (\ref{eta3}), we have
$$\eta_3(t,x)=\mbox{free wave}+R_3(t,x), \,\mbox{where}\, \lim_{t\rightarrow +\infty} \|R_3(t,x)\|_{H^1_x}=0.$$
\end{proposition}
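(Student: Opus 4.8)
The plan is to run the same three-step scheme used for $\eta_2$ in Subsection 6.1, now with Proposition \ref{6.4} in the role that Proposition \ref{6.1} played there. First I would expand (\ref{eta36}) via the angle-difference identity $\sin B(t-s)=\sin Bt\cos Bs-\cos Bt\sin Bs$ to write
\begin{equation*}
\eta_3(t,x)=\lambda\f{\sin Bt}{B}\int_0^t\cos Bs\,P_c(3a^2\psi^2\eta+3a\psi\eta^2+\eta^3)\,ds-\lambda\cos Bt\int_0^t\f{\sin Bs}{B}P_c(3a^2\psi^2\eta+3a\psi\eta^2+\eta^3)\,ds,
\end{equation*}
and then split $\int_0^t=\int_0^{+\infty}-\int_t^{+\infty}$ in each term. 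By Proposition \ref{6.4} the integrals $\int_0^{+\infty}\cos Bs\,P_c(\cdots)\,ds$ and $\int_0^{+\infty}\f{\sin Bs}{B}P_c(\cdots)\,ds$ exist in $L^2_x$, so
\begin{equation*}
\tilde u_3(t,x):=\lambda\f{\sin Bt}{B}\int_0^{+\infty}\cos Bs\,P_c(\cdots)\,ds-\lambda\cos Bt\int_0^{+\infty}\f{\sin Bs}{B}P_c(\cdots)\,ds
\end{equation*}
is well defined; since $\f{\sin Bt}{B}g$ and $(\cos Bt)g$ satisfy $(\partial_t^2+B^2)(\cdot)=0$ for any fixed $g\in L^2_x$, the function $\tilde u_3$ is a free wave. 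I would then set $R_3:=\eta_3-\tilde u_3$, so that
\begin{equation*}
R_3(t,x)=-\lambda\f{\sin Bt}{B}\int_t^{+\infty}\cos Bs\,P_c(\cdots)\,ds+\lambda\cos Bt\int_t^{+\infty}\f{\sin Bs}{B}P_c(\cdots)\,ds.
\end{equation*}

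The remaining task is to show $\|R_3(t,x)\|_{H^1_x}\to 0$, which is precisely the $L^2_x\to H^1_x$ upgrade already performed in this section. Since $\sin Bt/B$ and $\cos Bt$ are bounded on $L^2_x$, the tail estimates of Proposition \ref{6.4} give $\|R_3(t,x)\|_{L^2_x}\to 0$, which is (\ref{new 6.65}). Applying $B$ and using $B\cdot\f{\sin Bt}{B}=\sin Bt$ and $[B,\cos Bt]=0$, one gets $BR_3(t,x)=-\lambda\sin Bt\int_t^{+\infty}\cos Bs\,P_c(\cdots)\,ds+\lambda\cos Bt\int_t^{+\infty}\sin Bs\,P_c(\cdots)\,ds$, and both terms go to $0$ in $L^2_x$ again by Proposition \ref{6.4} together with the $L^2_x$-boundedness of $\sin Bt,\cos Bt$; this is (\ref{new 6.66})--(\ref{new 6.67}). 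Feeding $\|BR_3\|_{L^2_x}\to0$ and $\|R_3\|_{L^2_x}\to0$ into inequality (\ref{B lemma}), $\|R_3\|^2_{\dot{H}^1_x}\le\|BR_3\|^2_{L^2_x}+C\|R_3\|^2_{L^2_x}$, yields $\|R_3\|_{\dot{H}^1_x}\to0$ and hence $\|R_3\|_{H^1_x}\to0$. This is the exact analogue of the passage from Proposition \ref{6.1} to Proposition \ref{6.2}.

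All the genuine difficulty has already been absorbed upstream: Proposition \ref{6.5} is just an assembly of bounded-operator identities and angle-difference identities once Proposition \ref{6.4} is available, and Proposition \ref{6.4} in turn rests on Proposition \ref{63}. So the main obstacle is not in Proposition \ref{6.5} itself but in the construction of the auxiliary Dirac-type function $v(t,x)$ for the cubic source $\lambda P_c(3a^2\psi^2\eta+3a\psi\eta^2+\eta^3)$: one has to close the self-improving estimate on $G=\sup_{t>0}\|\eta_3(t,x)\|_{L^2_x}$, which succeeds only because the quadratic-in-$G$ left side dominates the $G^{1/3}$- and $G^{2/3}$-type contributions on the right, and because the slow bound $\|\eta\|_{L^8_x}\le\d_0/(1+t)^{3/4}$ is still integrable enough once two such factors are paired against the $1/(1+s)^{5/4}$ size of the cubic source in $L^2_x$. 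Granted that, Proposition \ref{6.5} follows at once by the scheme above.
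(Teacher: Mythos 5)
Your proposal matches the paper's proof essentially step for step: split $\eta_3$ via the angle-difference identity into a free wave part $\tilde u_3$ (from the $\int_0^{+\infty}$ integrals) plus a remainder $R_3$ (from the $\int_t^{+\infty}$ tails), invoke Proposition \ref{6.4} together with $L^2_x$-boundedness of $\sin Bt/B$ and $\cos Bt$ to get $\|R_3\|_{L^2_x}\to 0$ and $\|BR_3\|_{L^2_x}\to 0$, and finally use (\ref{B lemma}) to upgrade to $H^1_x$. Your closing remark that the real work was absorbed upstream into Proposition \ref{63} (closing the self-improving bound on $G=\sup_t\|\eta_3\|_{L^2_x}$) is also an accurate reading of the structure of the argument.
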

Gathering all the conclusions together, Theorem \ref{thm1.3} and Theorem \ref{thm1.4} follow. \\

Furthermore, we also achieve an explicit expression for scattering states $S_1(x)$ and $S_2(x)$:

\begin{theorem}\label{scattering states}(Scattering States)
For solution $u(t,x)$ to equation 
\begin{equation*}
\partial^2_{t}u-\Delta u+V(x) u+m^2 u=\lambda u^3, \quad \lambda \in \mathbb{R}\backslash 0.
\end{equation*}
\begin{equation*}
u(0,x)=u_0(x), \quad \partial_{t}u(0,x)=u_1(x).
\end{equation*}
as $t\rightarrow +\infty$, we have
$$u(t,x)=2\rho(t)\cos\theta(t)\psi(x)+\f{\sin Bt}{B}S_1(x)+\cos Bt S_2(x)+R(t,x), \,\, \mbox{where}$$
\begin{equation*}
\begin{split}
S_1(x)=&P_c u_1+\lambda\int_0^{+\infty}\cos Bs a^3(s) P_c\psi^3 ds\\
&\quad-\lambda \int_0^{+\infty} \cos Bs P_c(3a^2\psi^2\eta+3a\psi\eta^2+\eta^3)(s,x) ds \in L^2_x.
\end{split}
\end{equation*}
\begin{equation*}
\begin{split}
S_2(x)=&P_c u_0-\lambda\int_0^{+\infty}\f{\sin Bs}{B} a^3(s) P_c\psi^3 ds\\
&\quad+\lambda \int_t^{+\infty} \f{\sin Bs}{B} P_c(3a^2\psi^2\eta+3a\psi\eta^2+\eta^3)(s,x) ds \in H^1_x,
\end{split}
\end{equation*}
$$\rho(t)\approx\f{\rho(0)}{(1+t)^{\f14}}, \quad \theta(t)-\O t=\mathcal{O}(t^{\f12}), \quad \mbox{and} \quad \lim_{t\rightarrow +\infty} \|R(t,x)\|_{H^1_x}=0.$$
\end{theorem}

\appendix

\section{Estimates on $E'(t)$}

The goal of these two sections in appendix is to prove

$$|E'(t)|\leq \f{\d_0^2}{1+t}.$$

\subsection{Expression for $E'(t)$}

From (\ref{2.17}) in Section 2, we have

\begin{equation*}
\begin{split}
  A'(t)=&(2i\O)^{-1}e^{-i\O t}F(a,\eta)\\
 =&(2i\O)^{-1}e^{-i\O t}\cdot\lambda\cdot[a^3(t)\int \psi^4+3a^2(t)\int \psi^3\eta+3a(t)\int \psi^2\eta^2+\int\psi\eta^3]
 \end{split}
 \end{equation*}

One the other {\color{black}hand},  Proposition \ref{3.1} gives 

\begin{equation*}
\begin{split}
&A'(t)\\
=&\frac{-i\lambda}{2\Omega}\|\psi\|^4_4(A^3e^{2i\Omega t}+3|A|^2A+3|A|^2\bar{A}e^{-2i\Omega t}+\bar{A}^3e^{-4i\Omega t})\\
&-\frac{3\lambda^2}{4\Omega}\Gamma|A|^4A+\frac{3i\lambda^2}{4\Omega}[\Lambda-5\underline{\rho}(\Omega)+3\underline{\rho}(-\Omega)+\underline{\rho}(-3\Omega)]|A|^4A\\
&-\f{3\lambda^2}{4\Omega}A^5 e^{4i\O t}[i\Lambda+\Gamma-\underline{\rho}(-3\Omega)]\\
&-\f{3i\lambda^2}{4\Omega}|A|^2A^3e^{2i\Omega t}[-2\Lambda+2i\Gamma+\underline{\rho}(\Omega)+i\underline{\rho}(-3\Omega)+3\underline{\rho}(-\Omega)]\\
&-\f{3i\lambda^2}{4\Omega}|A|^4\bar{A}e^{-2i\Omega t}[7\underline{\rho}(\Omega)+9\underline{\rho}(-\Omega)+\underline{\rho}(-3\Omega)+\underline{\rho}(3\Omega+i0)]\\
&-\f{3i\lambda^2}{4\Omega}|A|^2\bar{A}^3e^{-4i\Omega t}[3\underline{\rho}(-\Omega)-2i\underline{\rho}(-3\Omega)+\underline{\rho}(3\Omega+i0)+3\underline{\rho}(\Omega)]\\
&-\f{3i\lambda^2}{4\Omega}\bar{A}^5e^{-6i\Omega t}[i\underline{\rho}(-3\Omega)-\frac43 i\underline{\rho}(3\Omega+i0)]+E,
\end{split}
\end{equation*}

These imply

\begin{equation*}
\begin{split}
E(t)=&(2i\O)^{-1}e^{-i\O t}\cdot\lambda\cdot[a^3(t)\int \psi^4+3a^2(t)\int \psi^3\eta+3a(t)\int \psi^2\eta^2+\int\psi\eta^3]\\
&+\frac{i\lambda}{2\Omega}\|\psi\|^4_4(A^3e^{2i\Omega t}+3|A|^2A+3|A|^2\bar{A}e^{-2i\Omega t}+\bar{A}^3e^{-4i\Omega t})\\
&+\frac{3\lambda^2}{4\Omega}\Gamma|A|^4A+\frac{3i\lambda^2}{4\Omega}[\Lambda-5\underline{\rho}(\Omega)+3\underline{\rho}(-\Omega)+\underline{\rho}(-3\Omega)]|A|^4A\\
&+\f{3\lambda^2}{4\Omega}A^5 e^{4i\O t}[i\Lambda+\Gamma-\underline{\rho}(-3\Omega)]\\
&+\f{3i\lambda^2}{4\Omega}|A|^2A^3e^{2i\Omega t}[-2\Lambda+2i\Gamma+\underline{\rho}(\Omega)+i\underline{\rho}(-3\Omega)+3\underline{\rho}(-\Omega)]\\
&+\f{3i\lambda^2}{4\Omega}|A|^4\bar{A}e^{-2i\Omega t}[7\underline{\rho}(\Omega)+9\underline{\rho}(-\Omega)+\underline{\rho}(-3\Omega)+\underline{\rho}(3\Omega+i0)]\\
&+\f{3i\lambda^2}{4\Omega}|A|^2\bar{A}^3e^{-4i\Omega t}[3\underline{\rho}(-\Omega)-2i\underline{\rho}(-3\Omega)+\underline{\rho}(3\Omega+i0)+3\underline{\rho}(\Omega)]\\
&+\f{3i\lambda^2}{4\Omega}\bar{A}^5e^{-6i\Omega t}[i\underline{\rho}(-3\Omega)-\frac43 i\underline{\rho}(3\Omega+i0)].
\end{split}
\end{equation*}

Since $a(t)=Ae^{i\O t}+\bar{A}e^{-i\O t}$, it is straightforward to check

$$(2i\O)^{-1}e^{-i\O t}\cdot\lambda\cdot a^3(t)\int \psi^4=\frac{-i\lambda}{2\Omega}\|\psi\|^4_4(A^3e^{2i\Omega t}+3|A|^2A+3|A|^2\bar{A}e^{-2i\Omega t}+\bar{A}^3e^{-4i\Omega t}).$$

Therefore, we arrive at

\begin{proposition}\label{a2}
\begin{equation*}
\begin{split}
E(t)=&(2i\O)^{-1}e^{-i\O t}\cdot\lambda\cdot[3a^2(t)\int \psi^3\eta+3a(t)\int \psi^2\eta^2+\int\psi\eta^3]\\
&+\frac{3\lambda^2}{4\Omega}\Gamma|A|^4A+\frac{3i\lambda^2}{4\Omega}[\Lambda-5\underline{\rho}(\Omega)+3\underline{\rho}(-\Omega)+\underline{\rho}(-3\Omega)]|A|^4A\\
&+\f{3\lambda^2}{4\Omega}A^5 e^{4i\O t}[i\Lambda+\Gamma-\underline{\rho}(-3\Omega)]\\
&+\f{3i\lambda^2}{4\Omega}|A|^2A^3e^{2i\Omega t}[-2\Lambda+2i\Gamma+\underline{\rho}(\Omega)+i\underline{\rho}(-3\Omega)+3\underline{\rho}(-\Omega)]\\
&+\f{3i\lambda^2}{4\Omega}|A|^4\bar{A}e^{-2i\Omega t}[7\underline{\rho}(\Omega)+9\underline{\rho}(-\Omega)+\underline{\rho}(-3\Omega)+\underline{\rho}(3\Omega+i0)]\\
&+\f{3i\lambda^2}{4\Omega}|A|^2\bar{A}^3e^{-4i\Omega t}[3\underline{\rho}(-\Omega)-2i\underline{\rho}(-3\Omega)+\underline{\rho}(3\Omega+i0)+3\underline{\rho}(\Omega)]\\
&+\f{3i\lambda^2}{4\Omega}\bar{A}^5e^{-6i\Omega t}[i\underline{\rho}(-3\Omega)-\frac43 i\underline{\rho}(3\Omega+i0)].
\end{split}
\end{equation*}
\end{proposition}

\begin{remark}
Since
$$|a(t)|\leq \f{\d_0}{(1+t)^{\f14}}, \quad \|\eta\|_{L^8_x}\leq \f{\d_0}{(1+t)^{\f34}},$$
we have
$$|E(t)|\leq \f{\d^2_0}{(1+t)^{\f54}}.$$
\end{remark}

We then take {\color{black}a} derivative of $t$ on both sides of {\color{black}the} above proposition

\begin{proposition}
\begin{equation*}
\begin{split}
E'(t)=&(-i\O)\cdot(2i\O)^{-1}e^{-i\O t}\cdot\lambda\cdot[3a^2(t)\int \psi^3\eta+3a(t)\int \psi^2\eta^2+\int\psi\eta^3]\\
&+(2i\O)^{-1}e^{-i\O t}\cdot\lambda\cdot[6a(t)a'(t)\int \psi^3\eta+3a'(t)\int \psi^2\eta^2+\int\psi\eta^3]\\
&+(2i\O)^{-1}e^{-i\O t}\cdot\lambda\cdot[3a^2(t)\int \psi^3\partial_t\eta+6a(t)\int \psi^2\eta\cdot\partial_t\eta+3\int\psi\eta^2\cdot\partial_t\eta]\\
&+\bigg(\frac{3\lambda^2}{4\Omega}\Gamma|A|^4A+\frac{3i\lambda^2}{4\Omega}[\Lambda-5\underline{\rho}(\Omega)+3\underline{\rho}(-\Omega)+\underline{\rho}(-3\Omega)]|A|^4A\bigg)'\\
&+\f{3\lambda^2}{4\Omega}\bigg(A^5 e^{4i\O t}[i\Lambda+\Gamma-\underline{\rho}(-3\Omega)]\bigg)'\\
&+\f{3i\lambda^2}{4\Omega}\bigg(|A|^2A^3e^{2i\Omega t}[-2\Lambda+2i\Gamma+\underline{\rho}(\Omega)+i\underline{\rho}(-3\Omega)+3\underline{\rho}(-\Omega)]\bigg)'\\
&+\f{3i\lambda^2}{4\Omega}\bigg(|A|^4\bar{A}e^{-2i\Omega t}[7\underline{\rho}(\Omega)+9\underline{\rho}(-\Omega)+\underline{\rho}(-3\Omega)+\underline{\rho}(3\Omega+i0)]\bigg)'\\
&+\f{3i\lambda^2}{4\Omega}\bigg(|A|^2\bar{A}^3e^{-4i\Omega t}[3\underline{\rho}(-\Omega)-2i\underline{\rho}(-3\Omega)+\underline{\rho}(3\Omega+i0)+3\underline{\rho}(\Omega)]\bigg)'\\
&+\f{3i\lambda^2}{4\Omega}\bigg(\bar{A}^5e^{-6i\Omega t}[i\underline{\rho}(-3\Omega)-\frac43 i\underline{\rho}(3\Omega+i0)]\bigg)'.
\end{split}
\end{equation*}
\end{proposition}

We then move to prove

\begin{proposition}
\begin{equation}\label{A1}
|E'(t)|\leq \f{\d^2_0}{1+t}.
\end{equation}
\end{proposition}

\begin{proof}

Since $$|a(t)|+|a'(t)|+|A'(t)|+|\bar{A}'|\leq \f{\d_0}{(1+t)^{\f14}}, \quad \|\eta\|_{L^8_x}\leq \f{\d_0}{(1+t)^{\f34}},$$

we have

\begin{equation*}
\begin{split}
&|(-i\O)\cdot(2i\O)^{-1}e^{-i\O t}\cdot\lambda\cdot[3a^2(t)\int \psi^3\eta+3a(t)\int \psi^2\eta^2+\int\psi\eta^3]|\\
&+|(2i\O)^{-1}e^{-i\O t}\cdot\lambda\cdot[6a(t)a'(t)\int \psi^3\eta+3a'(t)\int \psi^2\eta^2+\int\psi\eta^3]|\\
\lesssim& \f{\d_0^3}{(1+t)^{\f54}}.
\end{split}
\end{equation*}

For terms of the form $e^{im\O t}A^{5-k}(t)\bar{A}^k(t)$, where $m$ and $k$ are integers and $0\leq k \leq 5$, we have

$$|\bigg(e^{im\O t}A^{5-k}\bar{A}^k\bigg)'|\leq |m\O\cdot A^{5-k}\bar{A}^k|+|(5-k)A^{4-k}\cdot A'\cdot \bar{A}^k|+|k A^{5-k}\bar{A}^{k-1}\bar{A}'|\lesssim \f{\d_0^5}{(1+t)^{\f54}}.$$

{\bf Claim:} In {\color{black}the} next section, we will prove

\begin{equation}\label{A2}
\|\partial_t \eta\|_{L^8_x}\leq \f{\d_0}{(1+t)^{\f12}}.
\end{equation}

Under this claim, we have

$$|(2i\O)^{-1}e^{-i\O t}\cdot\lambda\cdot[3a^2(t)\int \psi^3\partial_t\eta+6a(t)\int \psi^2\eta\cdot\partial_t\eta+3\int\psi\eta^2\cdot\partial_t\eta]|\leq \f{\d_0^3}{1+t}.$$

Gathering these estimates, we then arrive at (A.1) under the condition of (A.2). We then move to estimate $\partial_t \eta$.

\end{proof}

\section{Estimates on $\partial_t \eta$}

From (\ref{2.6}), $\eta$ satisfies

\begin{equation*}
\partial^2_t \eta+B^2\eta=\lambda P_c(a\psi+\eta)^3=\lambda P_c (a^3\psi^3+3a^2\psi^2\eta+3a\psi\eta^2+\eta^3).
\end{equation*}

$$\eta(x,0)=P_c u_0, \quad \partial_t\eta(x,0)=P_c u_1.$$

We hence have

\begin{equation*}
\begin{split}
&\partial^2_t (\partial_t\eta)+B^2(\partial_t\eta)\\
=&\lambda \bigg( P_c (a^3\psi^3+3a^2\psi^2\eta+3a\psi\eta^2+\eta^3)\bigg)'\\
=&\lambda P_c (3a^2\cdot a'\cdot\psi^3+6a\cdot a'\cdot\psi^2\cdot\eta+3a'\cdot\psi\cdot\eta^2)\\
&+\lambda P_c (3a^2\cdot\psi^2\cdot\partial_t\eta+6a\cdot\psi\cdot\eta\cdot\partial_t\eta+3\eta^2\cdot\partial_t\eta).
\end{split}
\end{equation*}

For initial data, we have

$$(\partial_t\eta)(x,0)=P_c u_1,$$

and

\begin{equation*}
\begin{split}
\partial_t (\partial_t \eta)(x,0)=&-B^2\eta(0,x)+\lambda P_c(a\psi+\eta)^3(x,0)\\
=&-B^2 P_c u_0+\lambda P_c (<\psi, u_0>\psi+P_c u_0)^3,
\end{split}
\end{equation*}

where we use $a(0)=<\psi,u_0>$.

By standard dispersive estimates (see (7.19), (7.20) and Proposition 7.2 in \cite{SW}), we have
\begin{equation*}
\begin{split}
\|\partial_t \eta(t,x)\|_{L^8_x}\leq&|\lambda|\int_{0}^t\|E_1(t-s)P_c(3a^2\cdot a'\cdot\psi^3+6a\cdot a'\cdot\psi^2\cdot\eta+3a'\cdot\psi\cdot\eta^2)\|_{L^8_x}ds\\
&+|\lambda|\int_{0}^t\|E_1(t-s)P_c(3a^2\cdot\psi^2\cdot\partial_t\eta+6a\cdot\psi\cdot\eta\cdot\partial_t\eta+3\eta^2\cdot\partial_t\eta)\|_{L^8_x}ds\\
\leq&|\lambda|\int_{0}^{t-1} |t-s|^{-\f98}\|P_c(3a^2\cdot a'\cdot\psi^3+6a\cdot a'\cdot\psi^2\cdot\eta+3a'\cdot\psi\cdot\eta^2)\|_{W^{1,\f87}_x}ds\\
&+|\lambda|\int_{t-1}^t |t-s|^{-\f38}\|P_c(3a^2\cdot a'\cdot\psi^3+6a\cdot a'\cdot\psi^2\cdot\eta+3a'\cdot\psi\cdot\eta^2)\|_{W^{1,\f87}_x}ds\\
&+|\lambda|\int_{0}^{t-1} |t-s|^{-\f98}\|P_c(3a^2\cdot\psi^2\cdot\partial_t\eta+6a\cdot\psi\cdot\eta\cdot\partial_t\eta+3\eta^2\cdot\partial_t\eta)\|_{W^{1,\f87}_x}ds\\
&+|\lambda|\int_{t-1}^{t} |t-s|^{-\f38}\|P_c(3a^2\cdot\psi^2\cdot\partial_t\eta+6a\cdot\psi\cdot\eta\cdot\partial_t\eta+3\eta^2\cdot\partial_t\eta)\|_{W^{1,\f87}_x}ds\\
=&V_1+V_2+V_3+V_4. 
\end{split}
\end{equation*}
For $V_1$, we use similar H\"older's inequalities as in Proposition 7.2 in \cite{SW} and derive
\begin{equation*}
\begin{split}
V_1=&|\lambda|\int_{0}^{t-1} |t-s|^{-\f98}\|P_c(3a^2\cdot a'\cdot\psi^3+6a\cdot a'\cdot\psi^2\cdot\eta+3a'\cdot\psi\cdot\eta^2)\|_{W^{1,\f87}_x}ds\\
\leq&|\lambda|C_{\psi}\int_0^{t-1}|t-s|^{-\f98}\bigg((1+s)^{-\f34}\d_0^3+(1+s)^{-\f12}\d_0^2(\|\eta\|_{L^8_x}+\|B\eta\|_{L^4_x})\bigg)\\
&\quad\quad\quad\quad\quad\quad\quad\quad\quad\quad\quad\quad\quad\quad\quad +(1+s)^{-\f14}\d_0\bigg(\|\eta\|^2_{L^8_x}+\|B\eta\|_{L^4_x}\|\eta\|_{L^8_x}\bigg)ds\\
\leq&\f{|\lambda|C_{\psi}\cdot\d_0^3}{(1+t)^{\f78}}.
\end{split}
\end{equation*}
We use $C_{\psi}$ to represent a constant depending on $\psi$. And for the last step, we employ the {\color{black}following} estimates in \cite{SW} (see (7.33), (7.35), (7.40) in \cite{SW})
$$\|\eta(s,x)\|_{L^8_x}\leq \f{\d_0}{(1+s)^{\f34}}, \quad \|B\eta(s,x)\|_{L^4_x}\leq \f{\d_0}{(1+s)^{\f14}}.$$
For $V_2$, it holds
\begin{equation*}
\begin{split}
V_2=&|\lambda|\int_{t-1}^t |t-s|^{-\f38}\|P_c(3a^2\cdot a'\cdot\psi^3+6a\cdot a'\cdot\psi^2\cdot\eta+3a'\cdot\psi\cdot\eta^2)\|_{W^{1,\f87}_x}ds\\
\leq&|\lambda|C_{\psi}\int_{t-1}^{t}|t-s|^{-\f38}(s^{-\f34}\d_0^3+s^{-\f12}\d_0^2(\|\eta\|_{L^8_x}+\|B\eta\|_{L^4_x})\\
&+s^{-\f14}\d_0(\|\eta\|^2_{L^8_x}+\|B\eta\|_{L^4_x}\|\eta\|_{L^8_x})ds\\
\leq&\f{|\lambda|C_{\psi}\cdot\d_0^3}{(1+t)^{\f34}}.
\end{split}
\end{equation*}
We then move to estimate $V_3$. By the definition of $P_c$, for any function $f(x)\in L^p_x,\, 1<p\leq+\infty$,  we have
$$\|P_c f\|_{L^p_x}=\|f-<f,\psi>\psi\|_{L^p_x}\leq C_{\psi}\|f\|_{L^p_x}.$$
$$\|\partial P_c f\|_{L^p_x}=\|\partial f-<f,\psi>\partial\psi\|_{L^p_x}\leq C_{\psi}(\|f\|_{L^p_x}+\|\partial f\|_{L^p_x}).$$
Hence, we derive
\begin{equation*}
\begin{split}
V_3=&|\lambda|\int_{0}^{t-1} |t-s|^{-\f98}\|P_c(3a^2\cdot\psi^2\cdot\partial_t\eta+6a\cdot\psi\cdot\eta\cdot\partial_t\eta+3\eta^2\cdot\partial_t\eta)\|_{L^{\f87}_x}ds\\
&+|\lambda|\int_{0}^{t-1} |t-s|^{-\f98}\|\partial P_c(3a^2\cdot\psi^2\cdot\partial_t\eta+6a\cdot\psi\cdot\eta\cdot\partial_t\eta+3\eta^2\cdot\partial_t\eta)\|_{L^{\f87}_x}ds\\
\leq&|\lambda| C_{\psi} \int_{0}^{t-1} |t-s|^{-\f98}\|(3a^2\cdot\psi^2\cdot\partial_t\eta+6a\cdot\psi\cdot\eta\cdot\partial_t\eta+3\eta^2\cdot\partial_t\eta)\|_{L^{\f87}_x}ds\\
&+|\lambda| C_{\psi}\int_{0}^{t-1} |t-s|^{-\f98}\|\partial (3a^2\cdot\psi^2\cdot\partial_t\eta+6a\cdot\psi\cdot\eta\cdot\partial_t\eta+3\eta^2\cdot\partial_t\eta)\|_{L^{\f87}_x}ds\\
=&V_{31}+V_{32}.
\end{split}
\end{equation*}
Together with H\"older's inequality and the fact $\|\eta\|_{L^2_x}+\|\partial_t \eta\|_{L^2_x}\leq \d_0$ (coming from energy estimate in \cite{SW}), we bound $V_{31}$:
\begin{equation*}
\begin{split}
V_{31}=&|\lambda|C_{\psi}\int_{0}^{t-1} |t-s|^{-\f98}\|(3a^2\cdot\psi^2\cdot\partial_t\eta+6a\cdot\psi\cdot\eta\cdot\partial_t\eta+3\eta^2\cdot\partial_t\eta)\|_{L^{\f87}_x}ds\\
\leq&|\lambda|C_{\psi}\int_{0}^{t-1} |t-s|^{-\f98}\bigg((1+s)^{-\f12}\d_0^2\cdot \|\partial_t\eta(s,x)\|_{L^2_x}\\
+&(1+s)^{-\f14}\d_0\cdot\|\eta(s,x)\|_{L^8_x}\cdot \|\partial_t\eta(s,x)\|_{L^2_x}+\|\eta(s,x)\|^{\f13}_{L^2_x}\|\eta(s,x)\|^{\f53}_{L^8_x}\|\partial_t\eta(s,x)\|_{L^2_x}\bigg)ds\\
\leq&\f{|\lambda |C_{\psi}\cdot\d_0^3}{(1+t)^{\f58}}.
\end{split}
\end{equation*}

For $V_{32}$, we have
\begin{equation*}
\begin{split}
V_{32}
=&|\lambda|C_{\psi}\int_{0}^{t-1} |t-s|^{-\f98}\|\partial(3a^2\cdot\psi^2\cdot\partial_t\eta+6a\cdot\psi\cdot\eta\cdot\partial_t\eta+3\eta^2\cdot\partial_t\eta)\|_{L^{\f87}_x}ds\\
\leq&|\lambda|C_{\psi}\int_{0}^{t-1} |t-s|^{-\f98}\bigg((1+s)^{-\f12}\d_0^2\cdot \|\partial_t\eta(s,x)\|_{L^2_x}+(1+s)^{-\f12}\d_0^2\cdot \|\partial\partial_t\eta(s,x)\|_{L^2_x}\\
+&(1+s)^{-\f14}\d_0\cdot\|\eta(s,x)\|_{L^8_x}\cdot \|\partial\partial_t\eta(s,x)\|_{L^2_x}+(1+s)^{-\f14}\d_0\cdot\|\partial\eta(s,x)\|_{L^4_x}\cdot \|\partial_t\eta(s,x)\|_{L^2_x}\\
+&\|\eta(s,x)\|^{\f13}_{L^2_x}\|\eta(s,x)\|^{\f53}_{L^8_x}\|\partial\partial_t\eta(s,x)\|_{L^2_x}+\|\partial\eta(s,x)\|^{\f13}_{L^2_x}\|\eta(s,x)\|^{\f53}_{L^8_x}\|\partial_t\eta(s,x)\|_{L^2_x}\bigg)ds\\
\leq&\f{|\lambda |C_{\psi}\cdot\d_0^3}{(1+t)^{\f58}}.
\end{split}
\end{equation*}
For the last inequality, we use the $L^p$ boundedness of $\partial B^{-1}$ for $p\geq 1$ (see also the proof of (7.24) in \cite{SW}): 
$$\|\partial\eta(s,x)\|_{L^4_x}\leq \|B\eta(s,x)\|_{L^4_x}, \quad \|\partial\partial_t\eta\|_{L^2_x}\leq \|B\partial_t\eta\|_{L^2_x},$$
and we employ the estimate
$$\|B\eta(s,x)\|_{L^4_x}\leq\f{\d_0}{(1+s)^{\f14}}, \,\mbox{and energy estimate}\,\|B\partial_t\eta\|_{L^2_x}\lesssim \d_0.$$

Thus, we obtain
$$V_3\leq\f{|\lambda |C_{\psi}\cdot\d_0^2}{(1+t)^{\f58}}$$
We treat $V_4$ in the same fashion as for $V_3$  and obtain
\begin{equation*}
\begin{split}
V_{4}&\leq|\lambda|C_{\psi}\int_{t-1}^{t} |t-s|^{-\f38}\|(3a^2\cdot\psi^2\cdot\partial_t\eta+6a\cdot\psi\cdot\eta\cdot\partial_t\eta+3\eta^2\cdot\partial_t\eta)\|_{L^{\f87}_x}ds\\
&+|\lambda|C_{\psi}\int_{t-1}^{t} |t-s|^{-\f38}\|\partial(3a^2\cdot\psi^2\cdot\partial_t\eta+6a\cdot\psi\cdot\eta\cdot\partial_t\eta+3\eta^2\cdot\partial_t\eta)\|_{L^{\f87}_x}ds\\
\leq&|\lambda|C_{\psi}\int_{t-1}^{t} |t-s|^{-\f38}\bigg((1+s)^{-\f12}\d_0^2\cdot \|\partial_t\eta(s,x)\|_{L^2_x}\\
+&(1+s)^{-\f14}\d_0\cdot\|\eta(s,x)\|_{L^8_x}\cdot \|\partial_t\eta(s,x)\|_{L^2_x}+\|\eta(s,x)\|^{\f13}_{L^2_x}\|\eta(s,x)\|^{\f53}_{L^8_x}\|\partial_t\eta(s,x)\|_{L^2_x}\bigg)ds\\
+&|\lambda|C_{\psi}\int_{t-1}^{t} |t-s|^{-\f38}\bigg((1+s)^{-\f12}\d_0^2\cdot \|\partial_t\eta(s,x)\|_{L^2_x}+(1+s)^{-\f12}\d_0^2\cdot \|\partial\partial_t\eta(s,x)\|_{L^2_x}\\
+&(1+s)^{-\f14}\d_0\cdot\|\eta(s,x)\|_{L^8_x}\cdot \|\partial\partial_t\eta(s,x)\|_{L^2_x}+(1+s)^{-\f14}\d_0\cdot\|\partial\eta(s,x)\|_{L^4_x}\cdot \|\partial_t\eta(s,x)\|_{L^2_x}\\
+&\|\eta(s,x)\|^{\f13}_{L^2_x}\|\eta(s,x)\|^{\f53}_{L^8_x}\|\partial\partial_t\eta(s,x)\|_{L^2_x}+\|\partial\eta(s,x)\|^{\f13}_{L^2_x}\|\eta(s,x)\|^{\f53}_{L^8_x}\|\partial_t\eta(s,x)\|_{L^2_x}\bigg)ds\\
\leq&\f{|\lambda |C_{\psi}\cdot\d_0^3}{(1+t)^{\f12}}.
\end{split}
\end{equation*}

Putting all the estimates together for $V_1, V_2, V_3, V_4$, we hence {\color{black}proved}
$$\|\partial_t\eta(t,x)\|_{L^8_x}\leq \f{\d_0}{(1+t)^{\f12}}.$$

\end{document}